\title{FedHybrid: A Hybrid Primal-Dual Algorithm Framework for Federated Optimization}
\author{%
  Xiaochun Niu \\
  Department of Industrial Engineering and Management Sciences\\
  Northwestern University\\
  \texttt{xiaochunniu2024@u.northwestern.edu} \\
  \And
  Ermin Wei \\
  Department of Industrial Engineering and Management Sciences\\
  Northwestern University\\
  \texttt{ermin.wei@northwestern.edu} \\
}
\begin{document}
\maketitle


\begin{abstract}
We consider a multi-agent consensus optimization problem over a server-client (federated) network, where all clients are connected to a central server. Current distributed algorithms fail to capture the heterogeneity in clients' local computation capacities. Motivated by the generalized Method of Multipliers in centralized optimization, we derive {an approximate} Newton-type primal-dual method with a practical distributed implementation by utilizing the server-client topology. Then we propose a new primal-dual algorithm framework \textit{FedHybrid} that allows different clients to perform various types of updates. Specifically, each client can choose to perform either gradient-type or Newton-type updates. We propose a novel analysis framework for primal-dual methods and obtain a linear convergence rate of FedHybrid for strongly convex functions, regardless of clients' choices of gradient-type or Newton-type updates. Numerical studies are provided to demonstrate the efficacy of our method in practice. To the best of our knowledge, this is the first hybrid algorithmic framework allowing heterogeneous local updates for distributed consensus optimization with a provable convergence and rate guarantee.
\end{abstract}

\section{Introduction}

The problem of optimizing an objective function by employing a distributed procedure over a network where both data collection and model training is pushed to massive edge clients has gained significant attention recently. This is motivated by the large-scale nature of many modern big-data problems such as multi-vehicle and multi-robot networks \citep{cao2012overview,zhou2011multirobot}, machine learning \cite{duchi2011dual,tsianos2012consensus}, and especially \textit{federated learning} \cite{konevcny2016federated}. In such problems, a set of $n$ clients are connected to a central node (server), where each client has access only to its local data. {We refer to such a model as a \textit{server-client (federated) network.}} The goal is to learn a shared model over all the data in the network without exchanging local data due to privacy issues or communication limitations \citep{konevcny2016federated}. Formally, if we define $f_i:\RR^d\to \RR$ as local objective function corresponding to client $i$, the system-wide problem can be represented as
\#\label{prob:original}
    \min_{\omega\in \RR^d} \quad\sum\limits_{i=1}^n f_i(\omega).
\#
For instance, in a supervised learning setting, consider an empirical risk minimization problem, the local objective function $f_i$ represents expected loss over the local data distribution of client $i$.

To develop a distributed computation method where each client only has access to local information and can only communicate with the central server, we first decouple the computation of individual client by introducing local copies of the decision variable. We index the central server as the $0^{th}$ node and denote by $x_0, x_i \in \RR^d$ the local copies of $\omega$ kept at the central server and each client $i\in[n]$, respectively. Problem \ref{prob:original} is reformulated as the following \textit{consensus optimization} problem \citep{bertsekas1989parallel,nedic2009distributed},
\#\label{prob:constrained}
    \min_{\tilde{x}}  \quad \tilde{f}(\tilde{x}) = \sum\limits_{i=1}^n f_i(x_i) \quad
    \text{s.t. }W\tilde{x} = 0, 
\#
where $x = \rbr{x_1^\intercal,\cdots,x_{n}^\intercal}^\intercal \in \RR^{nd}$ and $\tilde{x} = \rbr{x_0^\intercal,x^\intercal}^\intercal \in \RR^{(n+1)d}$ are concatenations of local variables, $\tilde{f}:\RR^{(n+1)d}\to\RR$ is the aggregate function and $W = \rbr{\mathbbm{1}_n, -I_n}\otimes I_d \in \RR^{nd \times (n+1)d}$ is the adjacency matrix of the server-client network, where the $i^{th}$ row of $W\tilde x = 0$ represents the consensus constraint $x_0 = x_i$ and enforces the equivalence of the two problem. For notational simplicity, we define $f:\RR^{nd}\to\RR, f(x) = \tilde{f}(\tilde{x})$. {Also, throughout the paper, }{we differentiate whether a variable includes the server's decision or clients' only by using a tilde.}

While there is a proliferating literature on developing distributed optimization methods to solve the above problem, most existing works are either gradient-type methods \citep{nedic2009distributed,shi2015extra,jakovetic2014fast} or Newton-type methods \citep{shamir2014communication,zhang2015disco,wang2018giant,crane2019dingo}, where the clients have no flexibility to choose the types of updates. In stark comparison, due to the drastically varying capabilities of storage, computation, and communication among clients caused by hardware, network connectivity, and battery power, distributed systems in practice may involve severely heterogeneous clients \citep{chen2015mxnet, li2020federated}. The desire to provide hybrid methods for networks involving heterogeneous clients is even more pronounced in the \textit{federated learning} setting \citep{konevcny2016federated,li2018federated,li2020federated}. 
As a result, it is imperative to provide a flexible and efficient hybrid algorithm framework to tolerate heterogeneous clients and utilize the heterogeneous structure. To the best of our knowledge, however, algorithms with heterogeneous clients setting have not been considered before. 


To incorporate heterogeneous clients, {we propose a hybrid primal-dual algorithm framework named \textit{FedHybrid}, which is a distributed method for server-client (federated) network allowing some clients to do first-order updates while others use second-order information.}  
Specifically, motivated by the generalized Method of Multipliers (MM) in centralized optimization \citep{tapia1977diagonalized,bertsekas2014constrained}, we first introduce a gradient-type primal-dual approximation of MM with a practical distributed implementation. Second, for a speedup, we propose a Newton-type approximation of MM, where we derive a distributed approximated dual Hessian utilizing the server-client network. Then we propose FedHybrid as a combination of gradient-type and Newton-type primal-dual methods. In specific, those clients with higher computational capabilities and/or cheaper cost to perform computation can implement Newton-type updates locally, while other clients can adopt much simpler gradient-type updates. Finally, we propose a novel analysis framework for primal-dual algorithms and obtain a last iterate linear (Q-linear) convergence rate of FedHybrid for strongly convex objective functions. Numerical experiments on both synthetic data and real-life data are conducted to demonstrate the efficacy of our method.

\vskip5pt
\noindent\textbf{Contributions.} Our contribution is threefold. First, we propose a Newton-type primal-dual method with a practical distributed implementation by approximating primal and dual Hessian utilizing the server-client network. Second, we propose FedHybrid as a primal-dual method combining both gradient-type and Newton-type updates for heterogeneous clients. Finally, we propose a novel analysis framework and obtain a last iterate linear convergence rate of FedHybrid for strongly convex objective functions. Numerical experiments are provided to demonstrate the efficacy of FedHybrid in practice. 
To the best of our knowledge, this is the first hybrid algorithmic framework allowing heterogeneous local updates for distributed consensus optimization with a provable convergence guarantee.
 
\vskip5pt
\noindent\textbf{Related Work.} Our work is closely related to the growing literature on distributed algorithms for solving Problem \ref{prob:constrained}. 
We outline various lines of researches as follows. We start by reviewing the literature studying general network topology. Primal iterative methods, including distributed (sub)-gradient descent (DGD) and related methods \cite{nedic2009distributed, shi2015extra}, have updates in the form of some linear combinations of a gradient descent step with respect to its local objective function and a weighted average with local neighbors. A related work is \citep{9069966}, where the authors derive a DGD based method with the inclusion of first and second order update in continuous time setting, which cannot be directly translated to a discrete time update. While they provide an asymptotic convergence guarantee, their work lacks convergence rate analysis.
There are econd-order methods including Network Newton \citep{7094740} and Distributed Newton method \citep{tutunov2019distributed}. Also, dual decomposition based methods include ADMM \citep{boyd2011distributed,wang2019global}, ESOM \citep{mokhtari2016decentralized}, and CoCoA\citep{smith2018cocoa}. While none of these works consider a mixture of first and second order updates, ESOM with an approximated primal-dual framework is closely related to our work. The main difference there is ESOM uses only gradient-type update for the dual variables. Since we utilize the server-client topology, FedHybrid allows dual updates to also mimic a Newton step.

Another line of research focuses on the server-client network or the federated learning setting. Existing works include primal first-order methods like FedAvg \citep{mcmahan2017communication,li2019convergence}, FedProx\citep{li2018federated} and FedAC \citep{yuan2020federated}. There are also primal-dual methods like FedPD \citep{zhang2020fedpd} that utilizing first-order updates in both primal and dual space. Such methods suffer from slow convergence due to their first-order nature. 
Some second-order methods have been proposed. Examples include DANE \citep{shamir2014communication}, DiSCO \citep{zhang2015disco}, GIANT \citep{wang2018giant}, and DINGO \citep{crane2019dingo}. However, most of these methods require the assumption that each client has access to IID local data, which is not realistic in practice, especially in federated learning settings. With the primal-dual framework, FedHybrid can handle non-IID data distributions among clients and enjoy the speedup brought by second-order updates.

\vskip5pt
\noindent\textbf{Notations.} For any integer $m$, we denote by $[m] = \{ 1,\cdots,m\}$, $I_m \in \RR^{m\times m}$ the identity matrix, and $\mathbbm{1}_m= \rbr{1,\cdots,1}^\intercal\in \RR^{m}$. Also, we denote by $\otimes$ the Kronecker product.


\section{Preliminaries}
In this section, we review the generalized Method of Multipliers (MM) in centralized optimization that is derived by formulating a dual problem {based on augmented Lagrangian} \citep{bertsekas2014constrained}. The MM method helps to motivate our derivation of FedHybrid.
Before presenting the methods, we first introduce {a standard condition on local objective functions that we will assume to hold throughout the paper.}
\begin{assumption}
\label{ass:hessian}
For all $i\in[n]$, the local function $f_i(\cdot)$ is twice continuously differentiable and the eigenvalues of the Hessian $\nabla^2f_i(\cdot)$ are bounded by constants $0 < {m_i} \le \ell_i < \infty$, that is, ${m_i} I \preceq  \nabla^2f_i(\cdot)  \preceq {\ell_i} I$. 
\end{assumption}
For all $i \in[n]$, the above upper bound {on the Hessian} implies that the gradient $\nabla f_i$ is $\ell_i$-Lipschitz continuous and the lower bound implies that the function $f_i$ is $m_i$-strongly convex. For notational convenience, we denote by $m = \min_{i\in[n]}\{m_i\}$ and $\ell = \max_{i\in[n]}\{\ell_i\}$.

\noindent\textbf{Dual Problem.}
{Now we introduce a dual problem  {of  Problem \ref{prob:constrained} based on augmented Lagrangian}.} To do so, we
introduce dual variables $\lambda = \rbr{\lambda_1^\intercal,\cdots, \lambda_n^\intercal}^\intercal\in \RR^{nd}$ with $\lambda_i\in\RR^d$ associated with the $i^{th}$ constraint $x_0=x_i$ and define the augmented Lagrangian function $\tilde{L}(\tilde{x}, \lambda)$ of Problem \ref{prob:constrained} as 
\#\label{eq:lagrangian_func}
\tilde{L}(\tilde{x},\lambda) = \tilde{f}(\tilde{x}) + \lambda^{\intercal}W\tilde{x} + \dfrac{\mu}{2}\tilde{x}^{\intercal}W^{\intercal}W\tilde{x},
\# 
where $\mu>0$ is a constant. We remark that the matrix $W^{\intercal}W$ is the graph Laplacian of the server-client network. The augmentation term $\mu \tilde{x}^{\intercal}W^{\intercal}W\tilde{x} / 2$ is {zero} if $\tilde{x}$ is feasible to Problem \ref{prob:constrained}; otherwise, it is positive and serves as a penalty for the violation of the consensus constraint. Then the dual problem is defined as follows, 
\#\label{prob:dual}
\max_{\lambda\in\RR^{nd}} g(\lambda), \quad \text{where } g(\lambda) = \min_{\tilde{x}\in\RR^{(n+1)d}} \tilde{L}(\tilde{x}, \lambda).
\#
{We will refer to $g:\RR^{nd}\to\RR$ as the dual function.}  For any $\lambda\in\RR^{nd}$, we define $\tilde{x}^*(\lambda)= \argmin_{\tilde{x}} \tilde{L}(\tilde{x},\lambda)$ as the optimal point of the inner problem in Problem \ref{prob:dual}. Following from Assumption \ref{ass:hessian} and {Slater’s condition, the strong duality holds \cite{boyd2004convex}.}
Thus, $\tilde{x}^*(\lambda^*)$ is the optimal solution of Problem \ref{prob:constrained}, where $\lambda^*$ is {an} optimal solution of Problem \ref{prob:dual}.

\noindent\textbf{Generalized MM.} We introduce the generalized MM \citep{tapia1977diagonalized,bertsekas2014constrained} {that uses the augmented Lagrangian function $\tilde{L}$ defined in \eqref{eq:lagrangian_func} to solve Problem \ref{prob:constrained} as follows.} 
At each iteration $k$, 
 \#
& \tilde{x}^*(\lambda^k) = \argmin_{\tilde{x}} \tilde{L}(\tilde{x},\lambda^k),\quad \lambda^{k+1} = U(\tilde{x}^*(\lambda^k), \lambda^k), \label{eq:mm1}
 \#
where $U:\RR^{(n+1)d\times nd}\to\RR$ is a {general} dual update formula with the property that $\lambda^* = U(\tilde{x}^*(\lambda^*), \lambda^*)$.
We give some popular choices for $U$ as follows, 
\#
U_1(\tilde{x}^*(\lambda^k),\lambda^k) &= \lambda^k + \beta_1 \nabla g(\lambda^k), \label{eq:u1}\\
U_2(\tilde{x}^*(\lambda^k),\lambda^k) &= \lambda^k - \beta_2 \rbr{\nabla^2 g(\lambda^k)}^{-1}\nabla g(\lambda^k), \label{eq:u2} 
\#
where $\beta_1, \beta_2 > 0$ are stepsizes. We remark that \eqref{eq:u1} and \eqref{eq:u2} correspond to gradient ascent method and Newton's method {with respect to the dual function defined in \ref{prob:dual}}, respectively. 

{The following lemma gives the explicit forms of $\nabla g(\lambda)$ and $\nabla^2 g(\lambda)$ used in \eqref{eq:u1} and \eqref{eq:u2}. 
\begin{lemma}[\cite{tapia1977diagonalized,tutunov2019distributed}] \label{lem:dual_hessian}
Under Assumption \ref{ass:hessian}, the gradient and the Hessian of the dual function defined in Problem \eqref{prob:dual} are given by the following formulas, respectively. \$\nabla g(\lambda) = W\tilde{x}^*(\lambda), 
\quad \nabla^2 g(\lambda) =  -W\rbr{\nabla_{\tilde{x}\tilde{x}}^2\tilde L(\tilde{x}^*(\lambda),\lambda)}^{-1}W^\intercal.\$
\end{lemma}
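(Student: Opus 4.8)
The plan is to derive both formulas from an envelope (Danskin) argument combined with the implicit function theorem, after first checking that the inner minimization is well posed. Throughout write $H(\lambda) := \nabla_{\tilde{x}\tilde{x}}^2\tilde{L}(\tilde{x}^*(\lambda),\lambda)$. Since $\tilde L(\tilde x,\lambda) = \tilde f(\tilde x) + \lambda^\intercal W\tilde x + \frac{\mu}{2}\tilde x^\intercal W^\intercal W\tilde x$ is linear in $\lambda$ and has a quadratic penalty in $\tilde x$, for every $\tilde x$ we have $\nabla_{\tilde{x}\tilde{x}}^2\tilde L(\tilde x,\lambda) = \nabla^2\tilde f(\tilde x) + \mu W^\intercal W$, which is independent of $\lambda$.

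First I would show that $\nabla_{\tilde{x}\tilde{x}}^2\tilde L(\cdot,\lambda)$ is positive definite, so that $\tilde{x}^*(\lambda) = \argmin_{\tilde x}\tilde L(\tilde x,\lambda)$ is the unique stationary point. By Assumption \ref{ass:hessian}, $\nabla^2\tilde f$ is block diagonal with blocks $0$ (for the server copy $x_0$, which does not appear in $\tilde f$) and $\nabla^2 f_i(\cdot)\succeq m_i I\succ 0$; hence $\nabla^2\tilde f(\tilde x)\succeq 0$ with kernel contained in the subspace where only the $x_0$-block is nonzero. On the other hand $W^\intercal W$ is the graph Laplacian of the connected star network, so $W^\intercal W\succeq 0$ with kernel equal to the consensus subspace $\{(\eta^\intercal,\dots,\eta^\intercal)^\intercal\}$. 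These two kernels intersect only at the origin, so $\nabla^2\tilde f(\tilde x) + \mu W^\intercal W$ is positive definite, uniformly in $\tilde x$ (a uniform lower bound follows by replacing $\nabla^2\tilde f$ with its fixed lower bound $\mathrm{diag}(0,m_1 I,\dots,m_n I)$). Consequently $\tilde L(\cdot,\lambda)$ is strongly convex, $\tilde{x}^*(\lambda)$ is well defined, it is characterized by the stationarity condition
\[
\nabla\tilde f(\tilde{x}^*(\lambda)) + W^\intercal\lambda + \mu W^\intercal W\tilde{x}^*(\lambda) = 0,
\]
and $H(\lambda)\succ 0$ is invertible.

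Next I would differentiate. Because $H(\lambda)$ is nonsingular, the implicit function theorem applied to the stationarity equation shows $\lambda\mapsto\tilde{x}^*(\lambda)$ is continuously differentiable; differentiating that equation in $\lambda$ gives $H(\lambda)\,\nabla_\lambda\tilde{x}^*(\lambda) + W^\intercal = 0$, so the Jacobian is $\nabla_\lambda\tilde{x}^*(\lambda) = -H(\lambda)^{-1}W^\intercal$. For the dual gradient, write $g(\lambda) = \tilde L(\tilde{x}^*(\lambda),\lambda)$ and apply the chain rule:
\[
\nabla g(\lambda) = \bigl(\nabla_\lambda\tilde{x}^*(\lambda)\bigr)^\intercal\,\nabla_{\tilde x}\tilde L(\tilde{x}^*(\lambda),\lambda) + \nabla_\lambda\tilde L(\tilde{x}^*(\lambda),\lambda);
\]
the first term vanishes by the stationarity condition, and the partial derivative of $\tilde L$ in $\lambda$ at fixed $\tilde x$ is $W\tilde x$, so $\nabla g(\lambda) = W\tilde{x}^*(\lambda)$. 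Differentiating once more and substituting the Jacobian above, $\nabla^2 g(\lambda) = W\,\nabla_\lambda\tilde{x}^*(\lambda) = -W H(\lambda)^{-1}W^\intercal$, which is exactly the claimed expression since $H(\lambda) = \nabla_{\tilde{x}\tilde{x}}^2\tilde L(\tilde{x}^*(\lambda),\lambda)$.

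The step I expect to be the main obstacle is the first one: establishing invertibility of $\nabla_{\tilde{x}\tilde{x}}^2\tilde L$ even though $\tilde f$ is only positive semidefinite in $\tilde x$ (it is flat along the server copy $x_0$), since this is what makes $\tilde{x}^*(\lambda)$ well defined and legitimizes both the implicit function theorem and the envelope computation. Once that is in place, the remaining differentiations are routine; the only points requiring care are the transpose convention in the Jacobian $\nabla_\lambda\tilde{x}^*(\lambda)$ and the observation that $\nabla^2 g$ is automatically symmetric because $H(\lambda)^{-1}$ is.
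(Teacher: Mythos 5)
Your proof is correct. The paper does not actually prove this lemma --- it imports it from the cited references --- but your argument is the standard one underlying those sources: establish positive definiteness of $\nabla^2\tilde f(\tilde x)+\mu W^\intercal W$ via the trivial intersection of the kernels of the two PSD summands (the $x_0$-only subspace versus the consensus subspace), then apply the implicit function theorem to the stationarity condition and Danskin's envelope argument to get $\nabla g(\lambda)=W\tilde x^*(\lambda)$ and $\nabla^2 g(\lambda)=-WH(\lambda)^{-1}W^\intercal$. You also correctly identify the only genuinely non-routine step, namely that invertibility of the primal Hessian must be argued despite $\tilde f$ being flat in $x_0$; nothing is missing.
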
}

We remark that the primal update in \eqref{eq:mm1} is computationally expensive due to the requirement of an exact solution to the inner minimization problem and it cannot be {readily} implemented in a distributed manner due to the nonseparable augmentation term  $\mu\tilde{x}^\intercal W^\intercal W\tilde{x}/2$. 

\section{Algorithm}
In this section, to reduce the computational complexity and obtain an update in a distributed manner, we first derive a gradient-type approximation of MM, which recovers the Arrow-Hurwicz-Uzawa method \citep{arrow1958h}. This is a special case of FedHybrid when all clients perform gradient-type updates. Then for a speedup, we propose a Newton-type approximation of MM utilizing the server-client topology, which leads to another special case of FedHybrid with all clients performing Newton-type updates. Finally, we combine the gradient-type and Newton-type updates to provide our FedHybrid method, which allows a mixture of first and second order updates by various clients and hence is able to provide flexibility to handle heterogeneity in the network.

\subsection{Gradient-type Approximation of MM}
To develop a first-order method based on the dual gradient ascent update in \eqref{eq:u1}, we need to compute  $\nabla g(\lambda^k) = W\tilde x^*(\lambda)$ by Lemma \ref{lem:dual_hessian}. However, the computation of an exact minimizer $\tilde x^*(\lambda^k)$ can be computationally expensive. Thus, we approximate it by taking a primal gradient descent step on the primal variable and use the obtained $\tilde{x}^k$ as an approximation. This leads to the following gradient-type primal-dual algorithm. At each iteration $k$,
\#\label{eq:up_1st}
\tilde{x}^{k+1} = \tilde{x}^k - \alpha_1\nabla_{\tilde{x}}\tilde{L}(\tilde{x}^k,\lambda^k),  \quad \lambda^{k+1} =\lambda^k + \beta_1W\tilde{x}^k,
\#
where $\alpha_1, \beta_1 >0$ are primal and dual stepsizes, respectively. {This recovers the} Arrow-Hurwicz-Uzawa method \citep{arrow1958h}, which is {a special case of} FedHybrid when all clients perform gradient-type updates. 
{While this gradient-type approximation lead to simple distributed implementation, it suffers from slow convergence due to its first-order nature}.  {This motivates us to consider Newton's method for a speedup.} 

\subsection{Newton-type Approximation of MM} 
We now derive a Newton-type approximation of the generalized MM under the server-client topology. 

\noindent\textbf{Primal Update.} 
We consider a Newton's step as an approximation of the primal update in \eqref{eq:mm1}.  Note that the primal Hessian $\nabla_{\tilde{x}\tilde{x}}^2\tilde L(\tilde{x}^k,\lambda^k) = \nabla^2\tilde{f}(\tilde{x}^k) + \mu W^\intercal W$ is nonseparable due to the graph Laplacian $W^\intercal W$, which makes the computation of the exact Hessian inverse intractable in a decentralized setting. Thus, we approximate the graph Laplacian $W^\intercal W$ by its block diagonal part $E = \diag \{nI_d, I_d,\cdots, I_d\}\in \RR^{(n+1)d\times(n+1)d}$. By using $\tilde{H} = \nabla^2\tilde{f}(\tilde{x}) + \mu E$ to approximate the primal Hessian $\nabla_{\tilde{x}\tilde{x}}^2\tilde L(\tilde{x},\lambda)$, we obtain the following Newton-type primal update. At each iteration $k$,
\#\label{eq:xp1}
\tilde{x}^{k+1} = \tilde{x}^k - (\tilde{H}^k)^{-1}\nabla_{\tilde{x}}\tilde{L}(\tilde{x}^k,\lambda^k), 
\#
where $\tilde H^k = \nabla^2\tilde{f}(\tilde{x}^k) + \mu E$ . In particular, the server update takes the following form,  
\#\label{eq:xp2}
x_0^{k+1} = \frac{1}{n}\sum_{i\in[n]}x_i^{k} - \frac{1}{\mu n}\sum_{i\in[n]}\lambda_i^{k}.
\#
We remark that $x_0^{k+1}$ can also be viewed as a penalized average of the primal decision variables $\{x_i^{k}\}_{i\in [n]}$, where the second term in \eqref{eq:xp2} serves as the penalty.

\noindent\textbf{Dual Update.} 
We define the exact dual Newton update as $\Delta \lambda^k$, that is, $\Delta \lambda^k$ satisfies the Newton update formula $\nabla^2 g(\lambda^k) \Delta \lambda^k = \nabla g(\lambda^k)$. 
Note that the explicit form of the Hessian $\nabla^2 g(\lambda^k)$ given in Lemma \ref{lem:dual_hessian} is nonseparable, making it difficult to compute $\Delta \lambda^k$ in a distributed manner. Thus, to obtain a Newton-type dual update in the form of \eqref{eq:u2} in a distributed scheme, we will provide an approximation of $\Delta \lambda^k$. We first define $\hat \nabla g(\lambda^k)$ and $\hat \nabla^2 g(\lambda^k)$ as estimators of $\nabla g(\lambda^k)$ and $\nabla^2 g(\lambda^k)$ defined in Lemma \ref{lem:dual_hessian}, respectively, as follows 
\#\label{eq:app_na}
\hat \nabla g(\lambda^k) = W\tilde{x}^k, \quad \hat\nabla^2 g(\lambda^k) = -W\rbr{\nabla_{\tilde{x}\tilde{x}}^2\tilde L(\tilde{x}^k,\lambda^k)}^{-1}W^\intercal,
\#
where we substitute  $\tilde x^k$ as an approximation of $\tilde{x}^*(\lambda^k)$. 
Following from \eqref{eq:app_na}, we introduce the following lemma to provide an approximation $\Delta \breve{\lambda}^k$ of $\Delta \lambda^k$, where $\Delta \breve{\lambda}^k$ satisfies the approximated dual Newton update $\hat \nabla^2 g(\lambda^k) \Delta \breve\lambda^k = \hat \nabla g(\lambda^k)$. The proof is deferred to Section \ref{sect:a1}.  
\begin{lemma} \label{lem:dual_hessian_app}
Under Assumption \ref{ass:hessian}, the approximation of dual Newton update $\Delta \breve{\lambda}^k$ defined as above satisfies
\$
W^\intercal \Delta \breve{\lambda}^k = \nabla^2_{\tilde{x}\tilde{x}} \tilde{L}(\tilde{x}^k, \lambda^k)(\mathbbm{1}_{n+1}\otimes y^k - \tilde{x}^k),
\$
where $y^k = \big(\sum_{i\in[n]}\nabla^2f_i(x_i^k) \big)^{-1}\sum_{i\in[n]}\nabla^2f_i(x_i^k) x_i^k$ is the Hessian weighted average of the primal decision variables. 
\end{lemma}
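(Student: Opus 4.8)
\textit{Proof plan.}
By construction $\Delta\breve\lambda^k$ solves $\hat\nabla^2 g(\lambda^k)\,\Delta\breve\lambda^k=\hat\nabla g(\lambda^k)$, which by \eqref{eq:app_na} is the linear system
\[
-\,W(H^k)^{-1}W^\intercal\,\Delta\breve\lambda^k \;=\; W\tilde x^k,\qquad H^k:=\nabla^2_{\tilde x\tilde x}\tilde L(\tilde x^k,\lambda^k)=\nabla^2\tilde f(\tilde x^k)+\mu W^\intercal W.
\]
The first step is to record that this system is well posed. Under Assumption~\ref{ass:hessian}, $\nabla^2\tilde f(\tilde x^k)$ is block diagonal with a zero server block and client blocks $\nabla^2 f_i(x_i^k)\succeq m_iI$, while $\mu W^\intercal W\succeq 0$; since $\ker W=\{\mathbbm{1}_{n+1}\otimes z:z\in\RR^d\}$ is the consensus subspace and any nonzero consensus vector has positive energy under $\nabla^2\tilde f(\tilde x^k)$, the only common null vector of $\nabla^2\tilde f(\tilde x^k)$ and $W$ is $0$, hence $H^k\succ 0$. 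As $W$ has full row rank, $-W(H^k)^{-1}W^\intercal\prec 0$ is invertible and $\Delta\breve\lambda^k$ is uniquely determined.

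Rather than inverting the rectangular sandwich $-W(H^k)^{-1}W^\intercal$ directly, the plan is to verify a candidate. Set $\tilde v^k:=\mathbbm{1}_{n+1}\otimes y^k-\tilde x^k$ and look for $\xi^k$ with $W^\intercal\xi^k=H^k\tilde v^k$. Two elementary facts drive the argument: (i) $W(\mathbbm{1}_{n+1}\otimes z)=0$ for every $z\in\RR^d$, since the $i$-th block of $W\tilde x$ equals $x_0-x_i$; and (ii) the definition of $y^k$ is exactly the normal equation $\sum_{i\in[n]}\nabla^2 f_i(x_i^k)(y^k-x_i^k)=0$. Using (i) and (ii) I would check $H^k\tilde v^k\in\mathrm{range}(W^\intercal)=(\ker W)^\perp$: for any $z$, $(\mathbbm{1}_{n+1}\otimes z)^\intercal H^k\tilde v^k$ splits into the Laplacian part $\mu\,(W(\mathbbm{1}_{n+1}\otimes z))^\intercal W\tilde v^k=0$ and the Hessian part $\sum_{i\in[n]}z^\intercal\nabla^2 f_i(x_i^k)(y^k-x_i^k)=0$ (the zero server block discards the $x_0$ term). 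Hence such a $\xi^k$ exists, and it is unique because $W^\intercal$ is injective.

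It then remains to substitute $\xi^k$ into the system: $-W(H^k)^{-1}W^\intercal\xi^k=-W(H^k)^{-1}H^k\tilde v^k=-W\tilde v^k=W\tilde x^k$, using (i) once more to drop $W(\mathbbm{1}_{n+1}\otimes y^k)$. Thus $\xi^k$ solves the well-posed system, so $\Delta\breve\lambda^k=\xi^k$ and $W^\intercal\Delta\breve\lambda^k=H^k\tilde v^k=\nabla^2_{\tilde x\tilde x}\tilde L(\tilde x^k,\lambda^k)(\mathbbm{1}_{n+1}\otimes y^k-\tilde x^k)$, which is the claim. The computations are routine; the only informative step is the range condition in the middle paragraph, which is precisely what forces the Hessian-weighted average $y^k$ to appear --- equivalently, $\mathbbm{1}_{n+1}\otimes y^k$ is the $H^k$-weighted orthogonal projection of $\tilde x^k$ onto $\ker W$, obtained by minimizing $\|\tilde x^k-\mathbbm{1}_{n+1}\otimes z\|_{H^k}^2$ over $z$, whose Laplacian part is constant in $z$, leaving $\sum_{i\in[n]}(x_i^k-z)^\intercal\nabla^2 f_i(x_i^k)(x_i^k-z)$.
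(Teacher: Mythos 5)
Your proof is correct and rests on the same two facts as the paper's: $\ker W$ is the consensus subspace $\{\mathbbm{1}_{n+1}\otimes z : z\in\RR^d\}$, and $y^k$ is pinned down by the normal equation $\sum_{i\in[n]}\nabla^2 f_i(x_i^k)(y^k-x_i^k)=0$ obtained by pairing against consensus vectors (which annihilate both $W^\intercal$ and the Laplacian term). The only difference is organizational: the paper derives $y^k$ forward, by observing that $(\nabla^2_{\tilde{x}\tilde{x}}\tilde L)^{-1}W^\intercal\Delta\breve\lambda^k+\tilde x^k$ must lie in $\ker W$ and then left-multiplying by $\mathbbm{1}_{n+1}^\intercal\otimes I_d$, whereas you posit $y^k$ and verify the candidate via the range condition $\mathrm{range}(W^\intercal)=(\ker W)^\perp$, adding an explicit well-posedness and uniqueness step that the paper leaves implicit (and note your $H^k$ denotes the full primal Hessian, which the paper instead calls $\tilde H$-like quantities, reserving $H^k$ for the client block).
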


If we further substitute $x_0^{k+1}$ defined in \eqref{eq:xp2} and $\tilde{H}^k$ defined above as approximations of $y^k$ and $\nabla^2_{\tilde{x}\tilde{x}} \tilde{L}(\tilde{x}^k, \lambda^k)$, respectively, we obtain an approximation $\Delta\hat{\lambda}^k$ of $\Delta \breve{\lambda}^k$ satisfying
$
W^\intercal\Delta\hat{\lambda}^k = \tilde{H}^k(\mathbbm{1}_{n+1}\otimes x_0^{k+1} - \tilde{x}^k).
$ 
Then following from the structure of the adjacency matrix $W$, we have 
\#\label{eq:hat_lam}
\Delta\hat{\lambda}^k = -H^k W\tilde{x}^k,
\#
where $H^k = \nabla^2 f(x^k) + \mu I_{nd}\in\RR^{nd\times nd}$, which is a submatrix of $\tilde H^k$, corresponding to the components related to the clients. 
Thus, using the dual Newton's formula in \eqref{eq:u2} with the approximation in \eqref{eq:hat_lam}, we obtain a Newton-type dual update  
\#\label{eq:dual_update_1}
\lambda^{k+1}=\lambda^k + \beta_2H^kW\tilde{x}^k. 
\#

Now, by combining \eqref{eq:xp1} and \eqref{eq:dual_update_1}, we obtain our Newton-type approximation of MM as follows. At each iteration $k$, we have
\#\label{eq:up_2nd}
\tilde{x}^{k+1} = \tilde{x}^k - (\tilde{H}^k)^{-1}\nabla_{\tilde{x}}\tilde{L}(\tilde{x}^k,\lambda^k), \quad\lambda^{k+1}=\lambda^k + \beta_2H^kW\tilde{x}^k, 
\#
where $H^k = \nabla^2 f(x^k) + \mu I_{nd}$ and $\tilde{H}^k = \diag \{\mu nI_d, H^k\}$.

\subsection{FedHybrid to Handle System Heterogeneity}
Since different clients in the network have different computation capacities, we consider combining gradient-type method in \eqref{eq:up_1st} and Newton-type method in \eqref{eq:up_2nd} to provide a hybrid update framework. Specifically, all clients in the network can choose to perform gradient-type or Newton-type updates based on their computation capacities. For notational convenience, we denote by $J_1 = \{i\in [n] \colon \text{client $i$ performs gradient-type updates}\}$ and $J_2 = \{i\in [n] \colon \text{client $i$ performs Newton-type updates}\}$. 
Based on such choices of different update types, we propose our hybrid updates as follow. At each iteration $k$, we have
\#\label{eq:up}
\tilde{x}^{k+1} = \tilde{x}^k - \tilde{A}(\tilde{D}^k)^{-1}\nabla_{\tilde{x}}\tilde{L}(\tilde{x}^k,\lambda^k), \quad\lambda^{k+1}=\lambda^k + BD^kW\tilde{x}^k,
\#
where stepsize matrices $\tilde{A} = \diag\{a_0, a_1, \cdots, a_n\}\otimes I_d \in\RR^{(n+1)d\times(n+1)d}$ and $B = \diag\{b_1,\cdots,b_n\}\otimes I_d\in\RR^{nd\times nd}$ with personalized stepsizes $a_i, b_i>0$, and update matrices $\tilde{D}^k = \diag\{I_d, D^k\}\in\RR^{(n+1)d\times(n+1)d}$, and $D^k = \diag\{D_1^k,\cdots,D_n^k\}\in\RR^{nd\times nd}$. Here $D_i^k = I$ if $i\in J_1$ while $D_i^k = \nabla^2 f_i(x_i) + \mu I$ if $i\in J_2$. 
We remark that the updates in \eqref{eq:up} generalize the updates in both \eqref{eq:up_1st} and \eqref{eq:up_2nd}. 
On the one extreme, if $J_1 = [n]$ and $J_2 = \emptyset$, the updates in \eqref{eq:up} recovers the gradient-type updates in \eqref{eq:up_1st}; on the other extreme, if $J_1 = \emptyset$ and $J_2 = [n]$, the updates in \eqref{eq:up} recovers the Newton-type updates in \eqref{eq:up_2nd}. 
Based on \eqref{eq:up}, we propose our FedHybrid method with a distributed scheme in Algorithm \ref{algo:fedh}.


\begin{algorithm}[h] 
    \caption{FedHybrid: Hybrid Primal-dual Algorithm for distributed consensus optimization}
    \label{algo:fedh}
    \begin{algorithmic}[1]
    \State{\textbf{Input:} Initialization $x_0^0$, $x_i^0$, $\lambda_i^0 \in\RR^d$ for all $i\in[n]$, index sets $J_1$ and $J_2$, penalty parameter $\mu$, and stepsizes $a_i,b_i\in \RR^+$ for all $i\in [n]$.}  
    \For{$k =  1, \ldots, K-1$} 
    \State{Server sends $x_0^k$ to all clients in the network;}
        \For{each client $i\in J_1$ \textbf{parallel}}\algorithmiccomment{Gradient-type updates} \label{line:g_start}
           \State{$x_i^{k+1} = x_i^k - a_i(\nabla f_i(x_i^k) - \lambda_i^k + \mu(x_i^k - x_0^k))$;}
           \State{$\lambda_i^{k+1} = \lambda_i^k + b_i(x_0^k - x_i^k)$;}
           \State{Send $x_i^{k+1}$ and $\lambda_i^{k+1}$ to the server;}
        \EndFor \label{line:g_end}
        \For{each client $i\in J_2$ \textbf{parallel}}\algorithmiccomment{Newton-type updates} \label{line:n_start}
           \State{$x_i^{k+1} = x_i^k - a_i(\nabla^2 f_i(x_i) + \mu I_d)^{-1}(\nabla f_i(x_i^k) - \lambda_i^k + \mu(x_i^k - x_0^k))$;}
           \State{$\lambda_i^{k+1} = \lambda_i^k + b_i(\nabla^2 f_i(x_i) + \mu I_d)(x_0^k - x_i^k)$;}
           \State{Send $x_i^{k+1}$ and $\lambda_i^{k+1}$ to the server;}
        \EndFor \label{line:n_end}
        \State{Server updates: $x_0^{k+1} = \sum_{i\in[n]}x_i^{k+1}/n- \sum_{i\in[n]}\lambda_i^{k+1}/(\mu n)$.} \algorithmiccomment{Consensus update} \label{line:con}
    \EndFor
\end{algorithmic}
\end{algorithm}

Our FedHybrid in Algorithm \ref{algo:fedh} consists of three steps: gradient-type updates, Newton-type updates, and consensus update. Specifically, gradient-type (Lines \ref{line:g_start}--\ref{line:g_end}) and Newton-type updates (Lines \ref{line:n_start}--\ref{line:n_end}) follow from \eqref{eq:up} by extracting the corresponding block.  In the consensus update (Line \ref{line:con}), we choose the stepsize $a_0 = 1/(\mu n)$ in $\tilde A$ of \eqref{eq:up} and replace the primal and dual decision variables $x^k$ and $\lambda^k$ by their updated counterpart $x^{k+1}$ and $\lambda^{k+1}$. Then, by substitution, we obtain that 
\#\label{eq:x0}
x_0^{k+1} = \frac{1}{n}\sum_{i\in[n]}x_i^{k+1} - \frac{1}{\mu n}\sum_{i\in[n]}\lambda_i^{k+1}, 
\#
which corresponds to the consensus update in Line \ref{line:con}.


\section{Convergence Analysis}

This section presents the convergence results for FedHybrid in Algorithm \ref{algo:fedh}, regardless of clients' choices of gradient-type or Newton-type updates. In Section \ref{sec:s_conv}, we show that FedHybrid converges to the optimal solution at a linear rate if the local objective $f_i(\cdot)$ is strongly convex for all $i\in [n]$. In Section \ref{sect:proof}, we provide the proof sketch of the theorem.
Before presenting the results, we first introduce our performance metric and a reformulation of the augmented Lagrangian function $\tilde{L}$.

\noindent\textbf{Performance Metric.} 
We define the primal tracking error and the dual optimality gap  as follows, 
\#\label{eq:terrors}
\Delta_{\tilde{x}}^k = \tilde{L}(\tilde{x}^k,\lambda^k) - \tilde{L}(\tilde{x}^*(\lambda^k),\lambda^k), \quad \Delta_{\lambda}^k = g(\lambda^*) - g(\lambda^k),
\#
where $\lambda^*$ is the optimal solution to the dual problem in \eqref{prob:dual}. Here $\Delta_{\tilde{x}}^k$ quantifies how close the augmented Lagrangian function at $\tilde{x}^k$ is from the optimal value of the inner problem given $\lambda^k$. In Section \ref{sect:proof}, we will present a novel proof framework to show the convergence of both $\Delta_{\lambda}^k$ and $\Delta_{\tilde{x}}^k$. We remark that under Assumption \ref{ass:hessian}, the convergence of $\Delta_{\lambda}^k$ ensures that the dual variable sequence $\{\lambda^k\}_{k \in [K]}$ converges to the optimal solution $\lambda^*$ and the convergence of $\Delta_{\tilde{x}}^k$ ensures that the primal variable sequence $\{\tilde{x}^k\}_{k \in [K]}$ converges to $\tilde{x}^*(\lambda^*)$, where $\tilde{x}^*(\lambda^*)$ is the optimal solution of the original problem in \eqref{prob:constrained} due to the strong duality.

\noindent\textbf{Reformulation of $\tilde{L}$ Based on Consensus Update.} 
With slight abuse of notations, the consensus update \eqref{eq:x0} in Algorithm \ref{algo:fedh} can be written as $x_0^{k} = x_0(x^{k},\lambda^{k})$, where $x_0:\RR^{nd}\times\RR^{nd}\to\RR$ such that $x_0(x, \lambda) = \rbr{\mathbbm{1}_n^\intercal \otimes I_d} \rbr{x/n-\lambda/(\mu n)}$ for any $x, \lambda \in \RR^{nd}$. 
By substituting $x_0 = x_0(x, \lambda)$ in the augmented Lagrangian function $\tilde L$ defined in \eqref{eq:lagrangian_func}, we have $\tilde{L}(\tilde{x},\lambda) = \tilde{L}(x_0(x, \lambda),x,\lambda)$. 
Motivated by this, we define $L:\RR^{nd}\times\RR^{nd}\to \RR$ such that $L(x,\lambda) = \tilde{L}(x_0(x, \lambda),x,\lambda)$, which can be shown to be equivalent. See Section \ref{sect:a2} for details. For convenience, we use $L$ in the convergence analysis.

\subsection{Convergence of FedHybrid for Strongly Convex Function}\label{sec:s_conv}

 In this section, we show that FedHybrid converges to the optimal solution at a linear rate under Assumption \ref{ass:hessian}. The following theorem states the results.

\begin{theorem}\label{thm:strong}
For any given $\mu > 0$, under Assumption \ref{ass:hessian}, we suppose that the stepsizes $\{a_i, b_i\}_{i\in [n]}$ satisfy the following condition,
\#
&a_i \le 1/(22\mu/9 + 2\ell), \quad b_i\le\min\{\mu/9, \underline{\alpha}m^2/21\}, \quad  \text{for } i\in J_1, \notag \\
&a_i \le (m_i + \mu)/(22\mu/9 + 2\ell), \quad  b_i\le\min\{\mu/9, \underline{\alpha}m^2/21\}/(\ell_i + \mu), \quad  \text{for } i\in J_2, \label{eq:steps}
\#
where $\underline{\alpha} = \min\{\min_{i\in J_1}\{a_i\}, \min_{i\in J_2}\{a_i/(\ell_i + \mu)\}\}$. Then for all $k = 0,1,\cdots, K-1$, the iterates from FedHybrid satisfy 
\$
\Delta^{k+1} \le \rbr{1-\rho}\Delta^k,
\$
where $\rho = \min\left\{3\underline{\beta}/{(13m+13\mu)}, {m\underline{\alpha}}/{2}\right\}$ with $\underline{\beta} = \min\{\min_{i\in J_1}\{b_i\}, \min_{i\in J_2}\{b_i(m_i+\mu)\}\}$, and $\Delta^k = 13\Delta_{\lambda}^{k} + \Delta_{\tilde x}^{k}$. Here $\Delta_{\lambda}^{k}$ and $\Delta_{\tilde x}^{k}$ are defined in \eqref{eq:terrors}. 
\end{theorem}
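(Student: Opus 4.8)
The plan is to exhibit $\Delta^{k}=13\,\Delta_{\lambda}^{k}+\Delta_{\tilde{x}}^{k}$ as a Lyapunov function and to obtain the one-step contraction by combining separate one-step estimates for the dual optimality gap $\Delta_{\lambda}^{k}$ and the primal tracking error $\Delta_{\tilde{x}}^{k}$. It is convenient to run the whole argument on the reformulated Lagrangian $L(x,\lambda)=\tilde{L}(x_0(x,\lambda),x,\lambda)$: since $x_0(x,\lambda)$ is exactly the partial minimizer of $\tilde{L}$ in the server variable, (i) FedHybrid is precisely the preconditioned gradient-descent--ascent iteration $x^{k+1}=x^{k}-A(D^{k})^{-1}\nabla_{x}L(x^{k},\lambda^{k})$, $\lambda^{k+1}=\lambda^{k}+BD^{k}\nabla_{\lambda}L(x^{k},\lambda^{k})$ with $\nabla_{\lambda}L(x^{k},\lambda^{k})=W\tilde{x}^{k}$ and $g(\lambda^k)=\min_x L(x,\lambda^k)$, so that $\Delta_{\tilde{x}}^{k}=L(x^{k},\lambda^{k})-g(\lambda^{k})$ along the trajectory (the server update keeps $x_0^{k}=x_0(x^{k},\lambda^{k})$); (ii) $L(\cdot,\lambda)$ is $m$-strongly convex and $(\ell+\mu)$-smooth in $x$; and (iii) $L(x,\cdot)$ is concave and quadratic in $\lambda$ with a constant, explicitly computable Hessian. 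The first step is to record these facts together with the uniform sandwich bounds $\underline{\alpha} I\preceq A(D^{k})^{-1}\preceq\bar{\alpha} I$ and $\underline{\beta} I\preceq BD^{k}\preceq\bar{\beta} I$, where $\underline{\alpha},\underline{\beta}$ are as in the statement and $\bar{\alpha}\le 1/(22\mu/9+2\ell)$, $\bar{\beta}\le\min\{\mu/9,\underline{\alpha}m^{2}/21\}$ follow from \eqref{eq:steps}; these must hold uniformly over $k$ and over the split $J_{1}\cup J_{2}$, which is exactly why the Newton-client stepsizes carry the $\ell_i+\mu$ and $m_i+\mu$ rescalings.

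Second, I would prove a dual one-step inequality $\Delta_{\lambda}^{k+1}\le(1-\gamma_{1})\,\Delta_{\lambda}^{k}+\delta_{1}\,\Delta_{\tilde{x}}^{k}$ with $\gamma_{1}$ proportional to $\underline{\beta}$. The mechanism is that $W\tilde{x}^{k}=\nabla g(\lambda^{k})+r^{k}$ with $r^{k}=W(\tilde{x}^{k}-\tilde{x}^{*}(\lambda^{k}))$ and $\|r^{k}\|^{2}=O(\Delta_{\tilde{x}}^{k}/m)$ by the strong convexity of $L(\cdot,\lambda^{k})$, so the dual step is an approximate preconditioned ascent step. To get the descent estimate without paying the dimension-dependent smoothness constant of $g$, I would write $g(\lambda^{k+1})=\min_{x}L(x,\lambda^{k+1})$, expand in $\lambda$ using the exact quadratic form of $L$ and the strong convexity of $L(\cdot,\lambda^{k})$ to obtain $g(\lambda^{k+1})-g(\lambda^{k})\ge\langle\nabla g(\lambda^{k}),\lambda^{k+1}-\lambda^{k}\rangle-O\big((1/m+1/\mu)\,\|\lambda^{k+1}-\lambda^{k}\|^{2}\big)$, control the inner product by a Young's inequality (using that $\bar{\beta}$ is small), and finally invoke a gradient-domination (Polyak--Lojasiewicz) inequality $\|\nabla g(\lambda^{k})\|^{2}\ge 2\underline{\sigma}\,\Delta_{\lambda}^{k}$, where $\underline{\sigma}$ is the strong-concavity modulus of $g$, bounded below in a dimension-free way via Lemma \ref{lem:dual_hessian} and the structure of $W$. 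The same computation also yields the companion estimate $g(\lambda^{k+1})-g(\lambda^{k})\ge c\,\underline{\beta}\,\|\nabla g(\lambda^{k})\|^{2}-O(\Delta_{\tilde{x}}^{k}/m)$, which is used below.

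Third, I would analyze the primal tracking error through the decomposition
\[
\Delta_{\tilde{x}}^{k+1}=\underbrace{\big[L(x^{k+1},\lambda^{k})-g(\lambda^{k})\big]}_{\text{(B): primal descent}}\;+\;\underbrace{\big[L(x^{k+1},\lambda^{k+1})-L(x^{k+1},\lambda^{k})\big]}_{\text{(A): dual drift}}\;+\;\underbrace{\big[g(\lambda^{k})-g(\lambda^{k+1})\big]}_{\text{(C)}\,=\,\Delta_{\lambda}^{k+1}-\Delta_{\lambda}^{k}}.
\]
Term (B) is controlled by the standard strongly-convex/smooth preconditioned-gradient-descent estimate, giving $(B)\le(1-\tfrac{3}{2}m\underline{\alpha})\,\Delta_{\tilde{x}}^{k}$; term (C) is kept as $\Delta_{\lambda}^{k+1}-\Delta_{\lambda}^{k}$ so that it combines with the $13\,\Delta_{\lambda}^{k+1}$ of the potential; term (A) is bounded using concavity of $L(x^{k+1},\cdot)$ together with $\|\nabla_{\lambda}L(x^{k+1},\lambda^{k})-\nabla g(\lambda^{k})\|=O(\|x^{k+1}-x^{*}(\lambda^{k})\|)$ and $\|\lambda^{k+1}-\lambda^{k}\|\le\bar{\beta}\,\|W\tilde{x}^{k}\|$, so that $(A)\le O(\bar{\beta})\,\|\nabla g(\lambda^{k})\|^{2}+O(\bar{\beta}/m)\,\Delta_{\tilde{x}}^{k}$. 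The $O(\bar{\beta}/m)\,\Delta_{\tilde{x}}^{k}$ piece is absorbed into the slack of (B)'s contraction precisely because $\bar{\beta}\le\underline{\alpha}m^{2}/21$, and the $O(\bar{\beta})\,\|\nabla g(\lambda^{k})\|^{2}$ piece is dominated by the $\|\nabla g(\lambda^{k})\|^{2}$ gain that (C) inherits from the companion dual estimate (using $\bar{\beta}$ small against $\underline{\beta}$).

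Finally, I would add $13$ times the dual inequality to the primal inequality, move all $\Delta_{\lambda}^{k+1}$ terms to the left-hand side, and verify that the resulting coefficients of $\Delta_{\lambda}^{k}$ and $\Delta_{\tilde{x}}^{k}$ are each at most $(1-\rho)$ times their coefficient in $\Delta^{k}$; the weight $13$ and the numerical thresholds $22\mu/9+2\ell$, $\mu/9$, $\underline{\alpha}m^{2}/21$ are exactly what make this bookkeeping close. I expect the main obstacle to be this coupling: the dual gap is perturbed by the primal inexactness ($+\delta_{1}\Delta_{\tilde{x}}^{k}$) while the primal tracking error is perturbed by the dual ``moving target'' (term (A)), so one must simultaneously (a) make the primal contraction $\tfrac{3}{2}m\underline{\alpha}$ out-run the perturbation it receives from a dual step---forcing $b_{i}$ to be of order at most $\underline{\alpha}m^{2}$---and (b) choose the relative weight ($13$) so that the dual-side perturbation $13\,\delta_{1}\Delta_{\tilde{x}}^{k}$ is still dominated by the primal contraction. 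A secondary technicality, and the reason to work with $L$ rather than $\tilde{L}$ directly, is keeping every curvature and smoothness constant (in particular for $g$) dimension-free and all estimates uniform over the iteration- and client-dependent preconditioners $D^{k}$.
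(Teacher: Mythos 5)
Your proposal is correct and follows essentially the same route as the paper: the same potential $13\Delta_\lambda^k+\Delta_{\tilde x}^k$, the same pair of coupled one-step inequalities (a dual ascent estimate perturbed by primal inexactness, and the identical (A)/(B)/(C) decomposition of $\Delta_{\tilde x}^{k+1}$ into dual drift, primal descent, and the difference of dual gaps), closed by the PL inequality for $g$ and strong convexity of $L(\cdot,\lambda)$. The only cosmetic differences are that the paper carries the primal inexactness as $\|\nabla_x L(x^k,\lambda^k)\|^2$ rather than $\Delta_{\tilde x}^k$ until the final step, and it does not need your workaround for the smoothness of $g$, since Lemma \ref{lem:g} already gives the dimension-free bound $\ell_g=1/\mu$ from the server-client structure.
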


\begin{remark}
Theorem \ref{thm:strong} provides a last iterate linear (Q-linear) rate of convergence of the error term $\Delta^k$, which combines the dual optimality gap $\Delta_\lambda^k$ and the primal tracking error $\Delta_{\tilde{x}}^k$. This ensures that FedHybrid converges linearly to the optimal solution, regardless of clients' choices of gradient-type or Newton-type updates. Also, in FedHybrid, each client can choose not only the type of updates based on their computation capacities, but also personalized stepsizes related to the properties of their local objective functions, which provides flexibility to handle heterogeneity in practice.
\end{remark}

\subsection{Proof Sketch} \label{sect:proof}
In this section, we provide a novel proof framework for analyzing primal-dual algorithms.  We apply such analysis to FedHybrid and obtain the convergence results in above theorems.


Due to the coupled nature of primal and dual updates in the algorithm, our main idea for analyzing primal-dual methods is to upper bound the dual optimality gap $\Delta_{\lambda}^{k}$ and the primal tracking error $\Delta_{\tilde{x}}^k$ through coupled inequalities. We remark that the dual function $g(\cdot)$ is $m_g$-strongly concave with $\ell_g$-Lipschitz continuous gradient and the primal function $L(\cdot,\lambda)$ is $m$-strongly concave with $\ell_L$-Lipschitz continuous partial gradient $\nabla_x L(x,\lambda)$ for any fixed $\lambda$. See Section \ref{sect:a3} for details.
We decompose our proof framework into the following three steps.

\noindent\textbf{Step 1.} We derive a bound of dual optimality gap $\Delta_{\lambda}^{k+1}$ with an alternative primal tracking error $\norm{\nabla_xL({x}^k,\lambda^k)}^2$ in the following lemma. This quantifies how close $\lambda^{k}$ is from the dual optimal $\lambda^*$.

\begin{lemma} \label{lem:gL}
Under Assumption \ref{ass:hessian}, given a constant $\mu>0$, we suppose that the stepsizes satisfy \eqref{eq:steps}. It holds for all $k = 0,1,\cdots,K-1$ that 
\$
\Delta_{\lambda}^{k+1} \le \Delta_{\lambda}^{k} - \rbr{\frac{1}{2}-\beta\ell_g}\norm{\nabla g(\lambda^{k})}^2_{BD^k} + \rbr{\frac{1}{2}+\beta\ell_g}\frac{\beta}{m^2} \norm{\nabla_xL({x}^k,\lambda^k)}^2,
\$
where $\beta = \max\{\max_{i\in J_1}\{b_i\}, \max_{i\in J_2}\{b_i(\ell_i+\mu)\}\}$ and $\ell_g$ is the Lipschitz constant of $g(\cdot)$.
\end{lemma}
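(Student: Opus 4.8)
The plan is to measure how much one dual step increases the dual objective $g$, and then absorb the error caused by using the approximate dual gradient $\hat\nabla g(\lambda^k)=W\tilde x^k$ from \eqref{eq:app_na} in place of the exact one $\nabla g(\lambda^k)=W\tilde x^*(\lambda^k)$ given in Lemma \ref{lem:dual_hessian}. Write the dual step as $\lambda^{k+1}-\lambda^k = BD^kW\tilde x^k = BD^k\rbr{\nabla g(\lambda^k)+e^k}$ with $e^k:=W\rbr{\tilde x^k-\tilde x^*(\lambda^k)}$. Since $g$ has $\ell_g$-Lipschitz gradient (Section \ref{sect:a3}), the descent inequality gives
\[
g(\lambda^{k+1}) \ge g(\lambda^k) + \langle \nabla g(\lambda^k),\, BD^k(\nabla g(\lambda^k)+e^k)\rangle - \frac{\ell_g}{2}\|BD^k(\nabla g(\lambda^k)+e^k)\|^2 .
\]
Because $\Delta_\lambda^{k+1}-\Delta_\lambda^k = -(g(\lambda^{k+1})-g(\lambda^k))$, the lemma reduces to lower bounding this right-hand side by $(1/2-\beta\ell_g)\|\nabla g(\lambda^k)\|^2_{BD^k}$ minus a controlled multiple of $\|e^k\|^2$.

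The heart of the argument is to bound $\|e^k\|$ by $\|\nabla_xL(x^k,\lambda^k)\|$. Using $W=\rbr{\mathbbm{1}_n,-I_n}\otimes I_d$, the $i$-th block of $e^k$ equals $(x_0^k-x_0^*(\lambda^k))-(x_i^k-x_i^*(\lambda^k))$, where $(x_0^*(\lambda^k),x^*(\lambda^k))$ are the server and client blocks of $\tilde x^*(\lambda^k)$. The consensus map $x_0(\cdot,\lambda^k)$ returns precisely the $x_0$-minimizer of $\tilde L(\cdot,\cdot,\lambda^k)$ (Section \ref{sect:a2}), so first-order optimality gives $x_0^*(\lambda^k)=x_0(x^*(\lambda^k),\lambda^k)$, while Line \ref{line:con} gives $x_0^k=x_0(x^k,\lambda^k)$; as this map is affine with identical $\lambda^k$-dependence in both, the dual contributions cancel and $e^k=\bigl[(\frac{1}{n}\mathbbm{1}_n\mathbbm{1}_n^\intercal-I_n)\otimes I_d\bigr](x^k-x^*(\lambda^k))$. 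The bracketed matrix has operator norm at most $1$, so $\|e^k\|\le\|x^k-x^*(\lambda^k)\|$. Finally, $x^*(\lambda^k)$ is the minimizer of $L(\cdot,\lambda^k)$, so $\nabla_xL(x^*(\lambda^k),\lambda^k)=0$, and $m$-strong convexity of the reduced Lagrangian $L(\cdot,\lambda^k)$ yields $\|x^k-x^*(\lambda^k)\|\le\frac{1}{m}\|\nabla_xL(x^k,\lambda^k)\|$, hence $\|e^k\|\le\frac{1}{m}\|\nabla_xL(x^k,\lambda^k)\|$.

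To finish I would carry out routine estimates on the displayed inequality. Since $BD^k$ is block-diagonal and positive definite with largest eigenvalue $\beta=\max\{\max_{i\in J_1}b_i,\max_{i\in J_2}b_i(\ell_i+\mu)\}$ (using Assumption \ref{ass:hessian} together with $D_i^k=I$ for $i\in J_1$ and $D_i^k=\nabla^2 f_i(x_i^k)+\mu I$ for $i\in J_2$), one has $\|BD^kv\|^2\le\beta\|v\|^2_{BD^k}$, the weighted Young inequality $\langle a,BD^kb\rangle\ge-\frac{1}{2}\|a\|^2_{BD^k}-\frac{1}{2}\|b\|^2_{BD^k}$, and $\|a+b\|^2_{BD^k}\le2\|a\|^2_{BD^k}+2\|b\|^2_{BD^k}$. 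Applying the Young inequality to the cross term and the other two bounds to the quadratic term collapses the displayed inequality to $g(\lambda^{k+1})-g(\lambda^k)\ge(1/2-\beta\ell_g)\|\nabla g(\lambda^k)\|^2_{BD^k}-(1/2+\beta\ell_g)\|e^k\|^2_{BD^k}$; then $\|e^k\|^2_{BD^k}\le\beta\|e^k\|^2\le\frac{\beta}{m^2}\|\nabla_xL(x^k,\lambda^k)\|^2$, and rearranging $\Delta_\lambda^{k+1}=\Delta_\lambda^k-(g(\lambda^{k+1})-g(\lambda^k))$ gives exactly the claimed bound. The stepsize condition \eqref{eq:steps} is invoked only through the value of $\beta$.

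I expect the second paragraph, not the algebra of the third, to be the real obstacle. One must recognize that the dual-gradient approximation error is precisely the primal tracking discrepancy $\tilde x^k-\tilde x^*(\lambda^k)$ passed through $W$, and that the particular consensus update in Line \ref{line:con} is exactly what makes the server blocks of $\tilde x^k$ and of $\tilde x^*(\lambda^k)$ the same affine function of the client variables, so that the $\lambda^k$ terms cancel and only $\|x^k-x^*(\lambda^k)\|$ survives; without this cancellation the error would carry an irreducible dual term. Converting $\|x^k-x^*(\lambda^k)\|$ into the alternative primal tracking quantity $\|\nabla_xL(x^k,\lambda^k)\|$ via strong convexity of $L$ is then the bridge that lets this estimate couple with the companion primal bound in the next step of the analysis framework.
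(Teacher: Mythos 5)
Your proposal is correct and follows essentially the same route as the paper: the same descent inequality from the $\ell_g$-Lipschitz dual gradient, the same Young-inequality split of the cross and quadratic terms yielding the coefficients $\tfrac12\mp\beta\ell_g$, and the same identification of the dual-gradient error as $W(\tilde{x}^k-\tilde{x}^*(\lambda^k))=-M(x^k-x^*(\lambda^k))$ (the paper gets this via \eqref{eq:a1} and Lemma \ref{lem:x0star}; your cancellation of the $\lambda^k$ terms in the consensus map is the same observation), followed by the bound $\|x^k-x^*(\lambda^k)\|\le\frac{1}{m}\|\nabla_xL(x^k,\lambda^k)\|$, which the paper obtains by a mean-value-theorem argument equivalent to your strong-convexity step (this is the content of Lemma \ref{lem:mx}).
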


We remark that the derivation of Lemma \ref{lem:gL} relies on the Lipschitz gradient property of the dual function $g(\cdot)$. See Section \ref{sect:a4} for a detailed proof of Lemma \ref{lem:gL}.

\noindent\textbf{Step 2.} We provide an upper bound of $\Delta_{\tilde{x}}^{k+1}$ involving the dual optimality gap $\Delta_{\lambda}^k$ as follows. We remark that the primal tracking error at iteration $k+1$ consists of the following three parts.
\$
\Delta_{\tilde{x}}^{k+1} = \underbrace{L(x^{k+1}, \lambda^{k+1}) -L(x^{k+1}, \lambda^{k})}_{\textstyle {\text{increase due to dual update}}} & + \underbrace{L(x^{k+1}, \lambda^{k}) -L(x^*(\lambda^{k}), \lambda^{k})}_{\textstyle {\text{updated primal tracking error}}} \\
& \qquad \qquad + \underbrace{L(x^*(\lambda^{k}), \lambda^{k}) - L(x^*(\lambda^{k+1}), \lambda^{k+1})}_{\textstyle {\text{difference between dual optimality gaps}}}. \notag
\$
We can upper bound the updated primal tracking error by $\Delta_{\tilde{x}}^k$ using the Lipschitz property of $\nabla_x L(x,\lambda)$ with respect to $x$ and upper bound the other two parts using dual information. As a result, we obtain the following lemma, whose proof is deferred to Section \ref{sect:a4}.
\begin{lemma} \label{lem:Lg}
Under Assumption \ref{ass:hessian}, given a constant $\mu>0$, we suppose that the stepsizes satisfy \eqref{eq:steps}. It holds for all $k = 0,1,\cdots,K-1$ that  
\$
\Delta_{\tilde{x}}^{k+1} \le&  \Delta_{\tilde{x}}^k  + \kappa\norm{\nabla g(\lambda^k)}^2_{BD^k} - \norm{\nabla_x L(x^{k}, \lambda^{k})}_{P^k}^2+ \Delta_{\lambda}^k - \Delta_{\lambda}^{k+1}, 
\$
where { $\kappa = 3 + {2\beta^2}/{\mu^2}+{\beta}/{\mu}$} and {$P^k = A(D^k)^{-1} - (\beta+{\ell_L}/{2})A^2(D^k)^{-2}-\beta \kappa / {m^2}I$}, $\beta$ is defined in Lemma \ref{lem:gL} and $\ell_L$ is the Lipschitz constant of $\nabla_x L(x,\lambda)$ with respect to $x$.
\end{lemma}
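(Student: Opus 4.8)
The plan is to start from the three-term telescoping identity displayed above, writing $\Delta_{\tilde x}^{k+1}=T_1+T_2+T_3$ for the three underbraced quantities (the increase due to the dual update, the updated primal tracking error, and the difference of dual optimality gaps), to bound $T_2$ by a primal descent argument and $T_1+2T_3$ by a dual argument, and then to recombine. I will use: the primal block of \eqref{eq:up}, which after eliminating $x_0$ through the consensus update reads $x^{k+1}=x^{k}-A(D^{k})^{-1}\nabla_x L(x^{k},\lambda^{k})$ with $A=\diag\{a_1,\dots,a_n\}\otimes I_d$, together with $\lambda^{k+1}=\lambda^{k}+BD^{k}W\tilde x^{k}$; the regularity of $L$ and $g$ from Section~\ref{sect:a3} ($L(\cdot,\lambda)$ strongly convex with $\ell_L$-Lipschitz partial gradient in $x$, $L(x,\cdot)$ with $\mu^{-1}$-Lipschitz gradient, $g$ with $\ell_g$-Lipschitz gradient); the envelope identities $\nabla_\lambda L(x^{k},\lambda^{k})=W\tilde x^{k}$ and $\nabla g(\lambda^{k})=W\tilde x^{*}(\lambda^{k})$ (Lemma~\ref{lem:dual_hessian} and the fact that $x_0(x,\lambda)$ minimizes $\tilde L(\cdot,x,\lambda)$ over $x_0$); and the equivalence $L(x^{*}(\lambda),\lambda)=g(\lambda)$ from Section~\ref{sect:a2}.

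\textbf{Bounding $T_2$.} Since $\nabla_x L(\cdot,\lambda^{k})$ is $\ell_L$-Lipschitz, the descent lemma along the step $x^{k+1}-x^{k}=-A(D^{k})^{-1}\nabla_x L(x^{k},\lambda^{k})$ gives $L(x^{k+1},\lambda^{k})-L(x^{k},\lambda^{k})\le-\norm{\nabla_x L(x^{k},\lambda^{k})}^2_{Q_0}$ with $Q_0=A(D^{k})^{-1}-(\ell_L/2)A^2(D^{k})^{-2}$; adding $L(x^{k},\lambda^{k})-L(x^{*}(\lambda^{k}),\lambda^{k})=\Delta_{\tilde x}^{k}$ yields $T_2\le\Delta_{\tilde x}^{k}-\norm{\nabla_x L(x^{k},\lambda^{k})}^2_{Q_0}$. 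This supplies the $\Delta_{\tilde x}^{k}$ term and the base of $P^{k}$, and it is the same estimate for clients in $J_1$ and in $J_2$ because the two update types differ only through the block-diagonal matrix $D^{k}$.

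\textbf{Bounding $T_1+2T_3$.} Here $T_3=g(\lambda^{k})-g(\lambda^{k+1})=\Delta_\lambda^{k+1}-\Delta_\lambda^{k}$. To bound $T_1$, the $\mu^{-1}$-smoothness of $L(x^{k+1},\cdot)$ gives $T_1\le\langle\nabla_\lambda L(x^{k+1},\lambda^{k}),\lambda^{k+1}-\lambda^{k}\rangle+\tfrac{1}{2\mu}\norm{\lambda^{k+1}-\lambda^{k}}^2$; to bound $2T_3$ from above we use $g(\lambda^{k+1})-g(\lambda^{k})\ge\langle\nabla g(\lambda^{k}),\lambda^{k+1}-\lambda^{k}\rangle-\tfrac{\ell_g}{2}\norm{\lambda^{k+1}-\lambda^{k}}^2$, so $2T_3\le-2\langle\nabla g(\lambda^{k}),\lambda^{k+1}-\lambda^{k}\rangle+\ell_g\norm{\lambda^{k+1}-\lambda^{k}}^2$. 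Summing, $T_1+2T_3\le\langle\nabla_\lambda L(x^{k+1},\lambda^{k})-2\nabla g(\lambda^{k}),\lambda^{k+1}-\lambda^{k}\rangle+(\tfrac{1}{2\mu}+\ell_g)\norm{\lambda^{k+1}-\lambda^{k}}^2$. Now write $\lambda^{k+1}-\lambda^{k}=BD^{k}W\tilde x^{k}$, $W\tilde x^{k}=\nabla g(\lambda^{k})+e^{k}$ and $\nabla_\lambda L(x^{k+1},\lambda^{k})=W\tilde x^{k}+\delta^{k}$, where $e^{k}=\nabla_\lambda L(x^{k},\lambda^{k})-\nabla_\lambda L(x^{*}(\lambda^{k}),\lambda^{k})$ and $\delta^{k}=\nabla_\lambda L(x^{k+1},\lambda^{k})-\nabla_\lambda L(x^{k},\lambda^{k})$; since $\nabla_\lambda L$ is $1$-Lipschitz in $x$ and $L(\cdot,\lambda^{k})$ is $m$-strongly convex, $\norm{e^{k}}\le\norm{x^{k}-x^{*}(\lambda^{k})}\le m^{-1}\norm{\nabla_x L(x^{k},\lambda^{k})}$ and $\norm{\delta^{k}}\le\norm{x^{k+1}-x^{k}}=\norm{\nabla_x L(x^{k},\lambda^{k})}_{A^2(D^{k})^{-2}}$. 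The algebraic core is the cancellation $\langle\nabla_\lambda L(x^{k+1},\lambda^{k})-2\nabla g(\lambda^{k}),\lambda^{k+1}-\lambda^{k}\rangle=\langle\delta^{k},\lambda^{k+1}-\lambda^{k}\rangle+\norm{e^{k}}^2_{BD^{k}}-\norm{\nabla g(\lambda^{k})}^2_{BD^{k}}$, where the $\langle\nabla g(\lambda^{k}),BD^{k}e^{k}\rangle$ cross terms cancel by symmetry of $BD^{k}$. Combining this with $\norm{e^{k}}^2_{BD^{k}}\le\beta\norm{e^{k}}^2$, $\norm{\lambda^{k+1}-\lambda^{k}}^2\le\beta\norm{W\tilde x^{k}}^2_{BD^{k}}\le 2\beta\norm{\nabla g(\lambda^{k})}^2_{BD^{k}}+2\beta\norm{e^{k}}^2_{BD^{k}}$, and Young's inequality on $\langle\delta^{k},\lambda^{k+1}-\lambda^{k}\rangle$ with parameter tuned so that the weight on $\norm{\nabla_x L(x^{k},\lambda^{k})}^2_{A^2(D^{k})^{-2}}$ equals exactly $\beta$, one arrives at $T_1+2T_3\le\kappa\norm{\nabla g(\lambda^{k})}^2_{BD^{k}}+\norm{\nabla_x L(x^{k},\lambda^{k})}^2_{\beta A^2(D^{k})^{-2}+(\beta\kappa/m^2)I}$, the bookkeeping of the scalar coefficients (all multiples of $\beta$, $\beta\ell_g$, $\beta/\mu$, $\beta^2/\mu^2$) being what forces the constant $\kappa=3+2\beta^2/\mu^2+\beta/\mu$ under the stepsize bound~\eqref{eq:steps}.

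\textbf{Conclusion and main obstacle.} Since $\beta A^2(D^{k})^{-2}+(\beta\kappa/m^2)I=Q_0-P^{k}$, writing $\Delta_{\tilde x}^{k+1}=(T_1+2T_3)+T_2-T_3$ and adding the two bounds gives $\Delta_{\tilde x}^{k+1}\le\kappa\norm{\nabla g(\lambda^{k})}^2_{BD^{k}}+\norm{\nabla_x L(x^{k},\lambda^{k})}^2_{Q_0-P^{k}}+\Delta_{\tilde x}^{k}-\norm{\nabla_x L(x^{k},\lambda^{k})}^2_{Q_0}-(\Delta_\lambda^{k+1}-\Delta_\lambda^{k})=\Delta_{\tilde x}^{k}+\kappa\norm{\nabla g(\lambda^{k})}^2_{BD^{k}}-\norm{\nabla_x L(x^{k},\lambda^{k})}^2_{P^{k}}+\Delta_\lambda^{k}-\Delta_\lambda^{k+1}$, which is the claim. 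The genuinely delicate step is the bound on $T_1+2T_3$: the dual step is driven by the surrogate gradient $W\tilde x^{k}=\nabla_\lambda L(x^{k},\lambda^{k})$ rather than the true $\nabla g(\lambda^{k})=W\tilde x^{*}(\lambda^{k})$, and the partial gradient of $L$ in the smoothness estimate sits at $x^{k+1}$ rather than $x^{k}$; both mismatches cost a factor of the primal tracking quantity $\norm{\nabla_x L(x^{k},\lambda^{k})}$, and the whole point of the parameter choices in \eqref{eq:steps} is to make those corrections land exactly inside $P^{k}$ while keeping $P^{k}\succeq0$ and $\kappa$ a bounded constant. A secondary but pervasive point is that every estimate must be carried in the weighted norms $\norm{\cdot}_{BD^{k}}$ and $\norm{\cdot}_{A^{2}(D^{k})^{-2}}$ so that one and the same argument covers both the gradient-type clients ($D^{k}_i=I$) and the Newton-type clients ($D^{k}_i=\nabla^2 f_i(x^{k}_i)+\mu I$).
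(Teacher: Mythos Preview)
Your argument is correct and does establish the lemma, but it takes a different route from the paper, and your closing remark about the constants is inaccurate. For $T_1=L(x^{k+1},\lambda^{k+1})-L(x^{k+1},\lambda^{k})$ the paper does \emph{not} use a smoothness bound on $L(x^{k+1},\cdot)$: since $L$ is explicitly quadratic in $\lambda$ (equation~\eqref{eq:a2}), the paper computes $T_1$ \emph{exactly} as $(\lambda^{k+1}-\lambda^k)^\intercal W\tilde x^{k+1}+\tfrac{1}{2\mu}(\lambda^{k+1}-\lambda^k)^\intercal Z(\lambda^{k+1}-\lambda^k)$ and then bounds each piece by Cauchy--Schwarz together with Lemma~\ref{lem:mx}. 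Likewise the paper treats $T_3$ as the bare identity $g(\lambda^k)-g(\lambda^{k+1})$ and never invokes the $\ell_g$-smoothness of $g$ inside this lemma; that tool is reserved for Lemma~\ref{lem:gL}. Your device of grouping $T_1+2T_3$ so that the cross terms collapse to $\|e^k\|^2_{BD^k}-\|\nabla g(\lambda^k)\|^2_{BD^k}$ is a neat alternative, and if you carry the arithmetic through honestly the coefficient you obtain on $\|\nabla g(\lambda^k)\|^2_{BD^k}$ is about $-\tfrac12+3\beta/\mu$ (negative under \eqref{eq:steps}), not $\kappa=3+2\beta^2/\mu^2+\beta/\mu$. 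Thus the claim that ``the bookkeeping\ldots forces the constant $\kappa$'' is wrong: your route yields strictly tighter constants, and the stated inequality follows only because it is weaker, not because your coefficients coincide with the paper's. The specific value of $\kappa$ in the paper is an artifact of its particular chain of Cauchy--Schwarz splittings in the direct estimate of $T_1$; what your cancellation buys is a sharper bound at the price of dragging the $\ell_g$-smoothness of $g$ into a lemma where the paper keeps the primal and dual smoothness arguments cleanly separated.
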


\noindent\textbf{Step 3.} We combine the above coupled inequalities to provide convergence results in Theorem \ref{thm:strong}. Using a weighted sum of the two inequalities in  Lemmas \ref{lem:gL} and \ref{lem:Lg}, and combining the strong-convexity of $L(\cdot, \lambda)$ and the strong-concavity of $g(\cdot)$, we conclude the proof of Theorem \ref{thm:strong}. See Section \ref{sect:b} for a detailed proof of the theorems.


\section{Numerical Experiments}\label{sect:experiments}
In this section, we present experimental results for FedHybrid on some convex distributed optimization problems. In particular, we consider least squares problems and binary classification problems in a server-client network, both over synthetic and real-life datasets. 

\noindent\textbf{Experimental Setup.}
We evaluate all algorithms on four setups with non-IID data partitioning, where in each setup, there are $n$ clients with $d$-dimensional decision variables: (1) Linear regression on a synthetic dataset: $n=20$, $d=30$. (2) Linear regression on non-IID partitioned Boston housing prices dataset: $n=8$, $d=14$. (3) Regularized logistic regression on a synthetic dataset: $n=20$, $d=30$. (4) Regularized logistic regression on non-IID partitioned mushroom dataset: $n=8$, $d=99$. 
For synthetic datasets, we generate data following different distributions across each client.  For real datasets, we distribute the data to each client according to the values of the responses.
In all setups, the local dataset sizes are randomly sampled. See Section \ref{sect:c1} for a detailed description of the setup. 

\noindent\textbf{Compared Methods.}
We focus on comparing the following algorithms designed for the server-client network: Federated Averaging (FedAvg) \citep{mcmahan2017communication}, Distributed Self-Concordant Optimization (DiSCO) \citep{zhang2015disco}, and FedHybrid with number of $K$ clients performing Newton-type updates while all others doing gradient-type updates (FedH-$K$). We remark that for FedH-$K$, we have $|J_2| = K$. In particular, FedH-G and FedH-N are short for FedHybrid with all clients performing gradient-type and Newton-type updates, respectively. 

We remark that FedAvg is the baseline of first-order method for the server-client network. For a fair comparison, we consider the non-stochastic version of FedAvg, that is, at each iteration, all agents in the network take a full gradient descent step. Among second-order methods, DiSCO is an inexact damped Newton method that performs well in practice. We choose DiSCO instead of DANE \citep{shamir2014communication}, AIDE \citep{reddi2016aide} or GIANT \citep{wang2018giant} as a baseline second-order method since the other methods require the assumption that each client has access to IID sampled data points, which is generally not realistic in practice. 
For each method, we tune parameters using grid search in the range $[10^{-4}, 10]$ and stepsizes in the range $[10^{-4}, 1]$ and choose the best one that minimizes the optimality gap. 

\noindent\textbf{Experimental Results.}
As shown in Figure \ref{fig:compare_1}, with some clients in the network performing Newton-type updates, FedHybrid improves the overall training speed a lot and outperforms the baseline method FedAvg consistently. See Section \ref{sect:c2} for more results.

\begin{figure}[h]
     \begin{subfigure}[b]{0.5\textwidth}
         \centering
         \includegraphics[width=\textwidth]{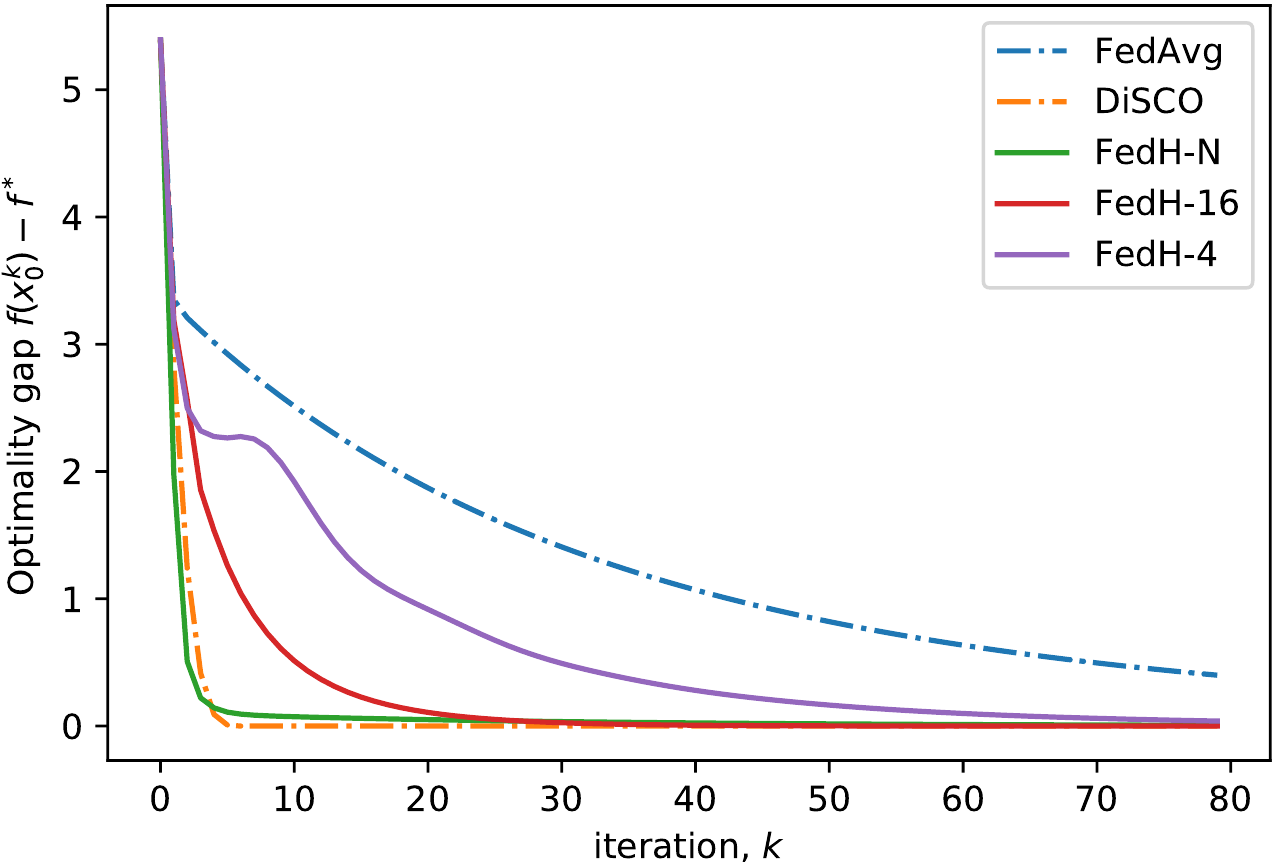}
         \caption{\small Least squares on synthetic dataset (setup (1)).}
     \end{subfigure}
     \begin{subfigure}[b]{0.5\textwidth}
         \centering
         \includegraphics[width=\textwidth]{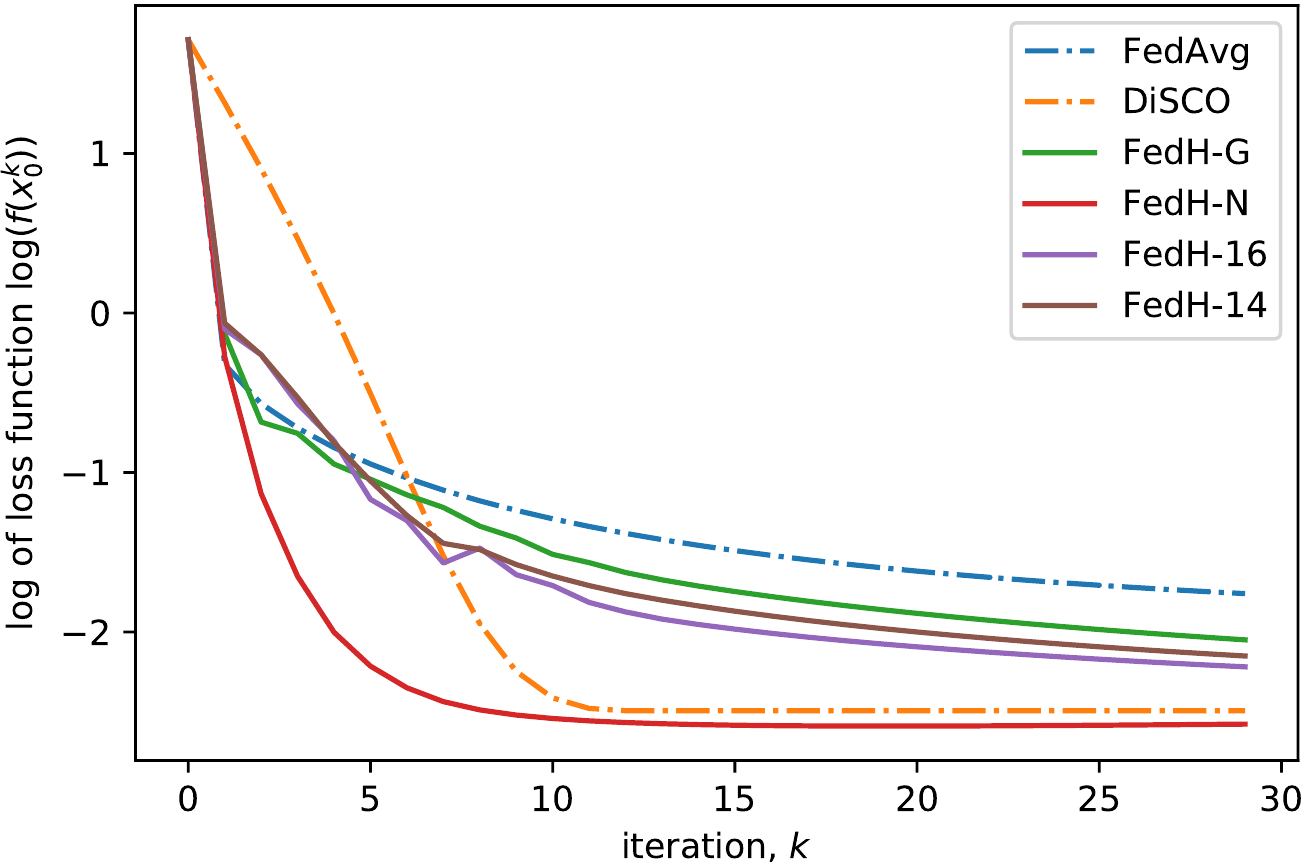}
         \caption{{Logistic regression on Mushroom dataset (setup (4)).}}
     \end{subfigure}
     \caption{ Comparison of convergence of different algorithms}
     \label{fig:compare_1}
\end{figure}

In traditional distributed optimization algorithms, all clients are performing the same updates. The complexity of the method is determined by the client equipped with the worst computation hardware. While in our proposed FedHybrid framework, since for some parts of the clients, efficient Newton-type updates are involved, the overall system enjoys a faster convergence speed compared to systems running gradient-type methods only. Thus we can maximally leverage the parallel heterogeneous computation capabilities in this setting.

In particular, if all clients in the network perform Newton-type updates, our second-order FedH-N method achieves a comparable convergence performance or outperforms DiSCO. The reason is that DiSCO uses a preconditioning matrix, which performs the best only when the clients' data are IID.

\noindent\textbf{Relation between convergence rate and the number of Newton-type clients.}
As shown in Figure \ref{fig:compare_2}, as the number of clients that perform Newton-type updates increases, the convergence rate of FedHybrid becomes faster. This suggests that those clients with higher computational capabilities and/or cheaper cost to perform computation can choose to implement Newton-type updates locally to help speedup the overall training speed of the system.

\begin{figure}[h]
     \begin{subfigure}[b]{0.5\textwidth}
         \centering
         \includegraphics[width=\textwidth]{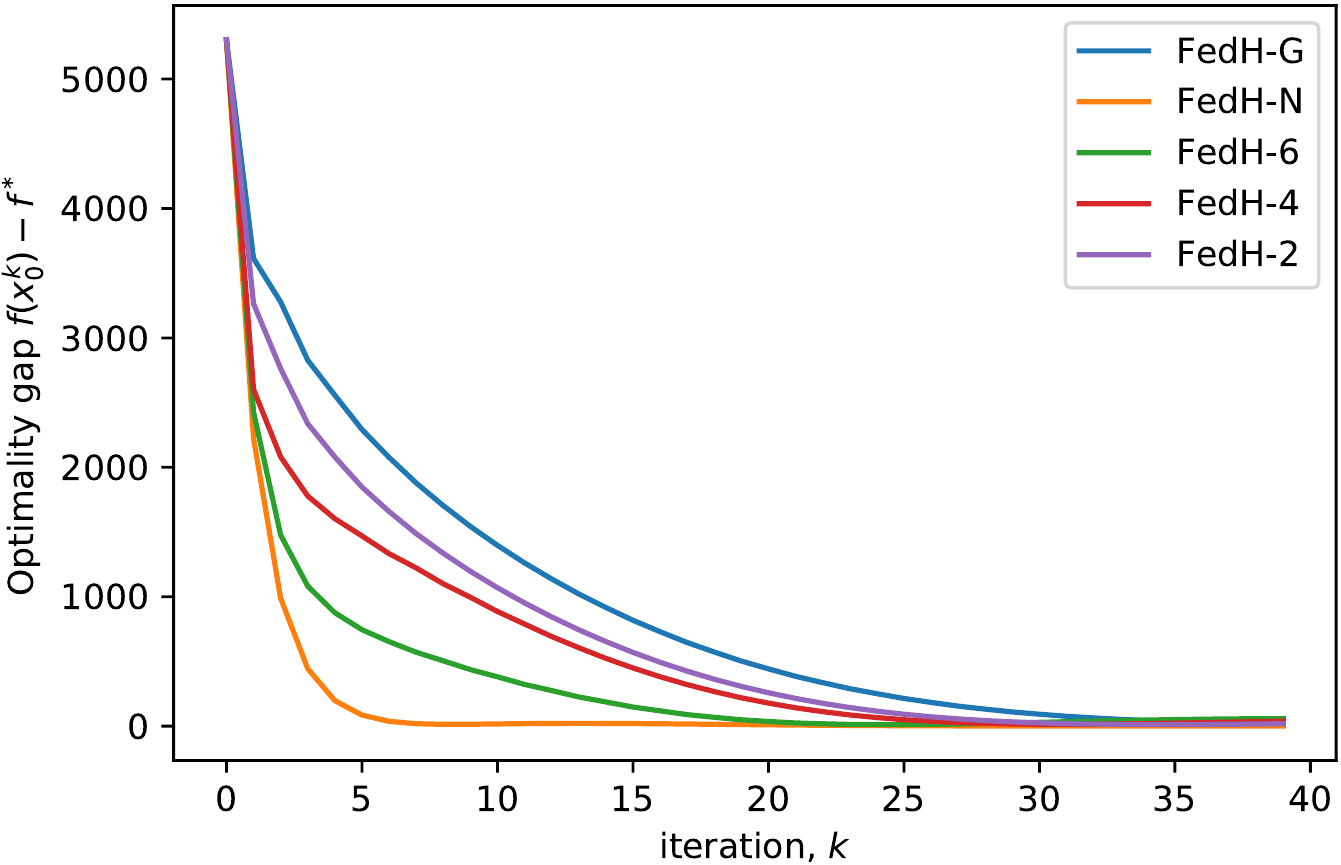}
         \caption{\small Least squares on Boston dataset (setup (2)).}
     \end{subfigure}
     \begin{subfigure}[b]{0.5\textwidth}
         \centering
         \includegraphics[width=\textwidth]{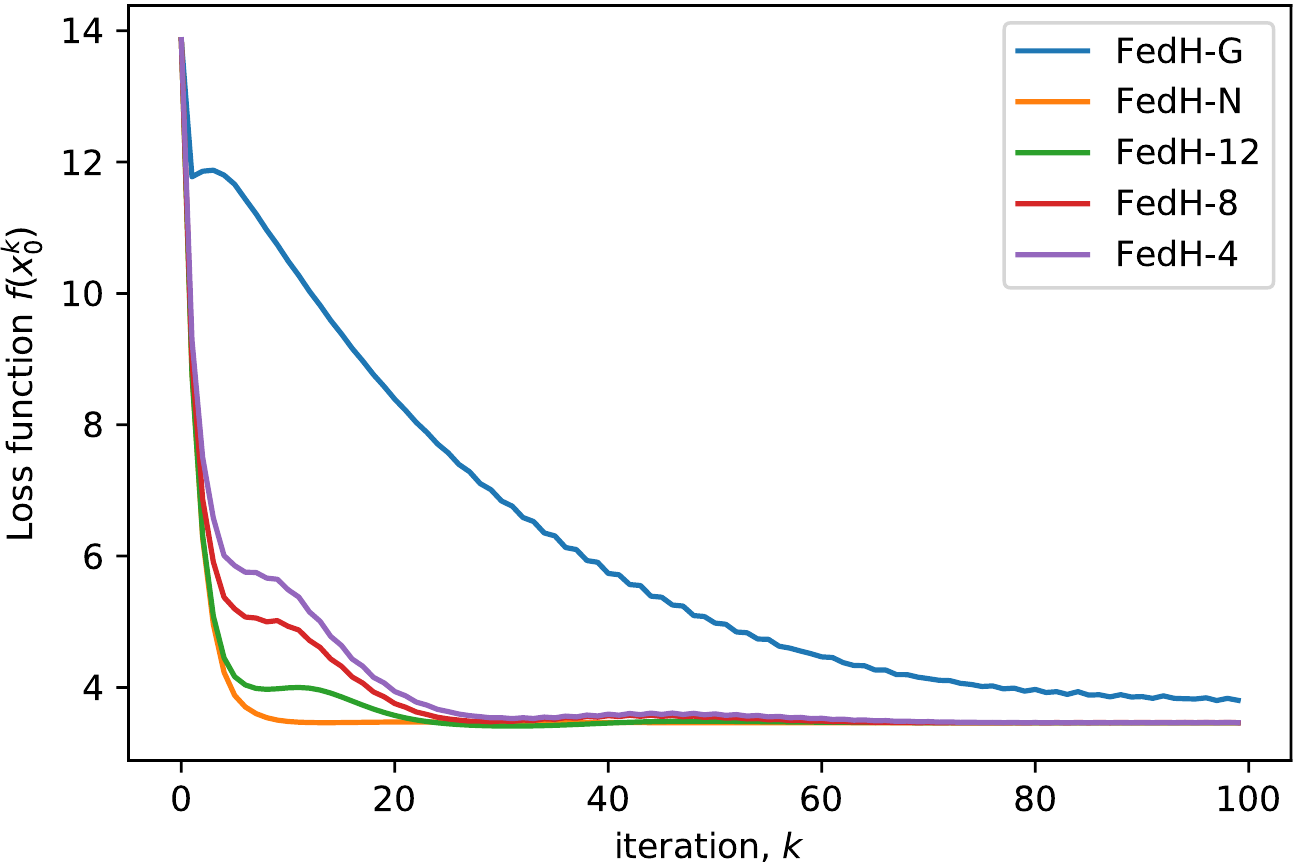}
         \caption{\small Logistic regression on synthetic dataset (setup (3)).}
     \end{subfigure}
     \caption{Comparison of convergence rate with different number of Newton-type clients.}
     \label{fig:compare_2}
\end{figure}

\section{Conclusion}

This paper proposes FedHybrid, a distributed hybrid primal-dual algorithm framework that allows clients to perform either gradient-type or Newton-type updates based on their computation capacities. We provide a novel analysis framework for primal-dual algorithms and obtain a linear convergence result of FedHybrid for strongly convex objective functions. Numerical studies are also provided to demonstrate the efficacy of FedHybrid in practice. 
We remark that while experiments show that FedHybrid can achieve a better performance with more clients performing Newton-type updates in the network, our theoretical convergence bounds are independent of the number of clients performing Newton-type updates.

We highlight a few interesting directions for future work on FedHybrid and federated optimization. First, we could consider stochastic FedHybrid methods. For instance, each client in the network could perform stochastic gradient-type or subsampled Newton-type methods on its local dataset. Moreover, in federated applications, at each communication round, it's possible that only a small subset of the clients are active. Thus, we could consider asynchronous updates in FedHybrid, where only a randomly selected subset of the clients perform updates at each iteration. Also, we expect FedHybrid could be generalized to broader settings, such as time-varying graphs and/or systems with non-convex objective functions. 


\newpage
\bibliographystyle{abbrvnat}
\bibliography{cite}

\newpage
\appendix 
\section{Proof of Lemmas} \label{sect:a}
In this section, we provide some lemmas with analysis. These results will help us to build a better understanding of the problem and will be used to prove the convergence theorems in the next section.

\subsection{Proof of Lemma \ref{lem:dual_hessian_app}} \label{sect:a1}
This section provides the proof of Lemma \ref{lem:dual_hessian_app}.
\begin{proof}
Following from the approximated dual Newton update formula $\hat \nabla^2 g(\lambda^k) \Delta \breve\lambda^k = \hat \nabla g(\lambda^k)$ and the explicit form of the dual gradient $\hat\nabla g(\lambda^k)$ and the dual Hessian $\hat\nabla^2 g(\lambda^k)$ given in \eqref{eq:app_na}, we have
\$
-W\rbr{\nabla_{\tilde{x}\tilde{x}}^2\tilde L(\tilde{x}^k,\lambda^k)}^{-1}W^\intercal \Delta\breve\lambda^k =  W\tilde{x}^k.
\$
We note that the null space of matrix $W = \rbr{\mathbbm{1}_n, -I_n}\otimes I_d$ is $\text{Null}(W) = \{\mathbbm{1}_{n+1}\otimes y: y\in\RR^d\}$. Thus, there exists $y^k\in\RR^d$ such that
\#\label{eq:wlambda1}
\rbr{\nabla_{\tilde{x}\tilde{x}}^2\tilde L(\tilde{x}^k,\lambda^k)}^{-1}W^\intercal \Delta\breve\lambda^k + \tilde{x}^k = \mathbbm{1}_{n+1}\otimes y^k.
\#
Rearranging terms in \ref{eq:wlambda1}, we have 
\#\label{eq:wlambda2}
W^\intercal \Delta\breve\lambda^k & = \nabla_{\tilde{x}\tilde{x}}^2\tilde L(\tilde{x}^k,\lambda^k)\rbr{\mathbbm{1}_{n+1}\otimes y^k - \tilde{x}^k} \notag \\
& = \rbr{\nabla^2 \tilde{f}(\tilde{x}^k) + \mu W^\intercal W}\rbr{\mathbbm{1}_{n+1}\otimes y^k - \tilde{x}^k},
\#
where the last equality follows by substituting the definition of $\nabla_{\tilde{x}\tilde{x}}^2\tilde L(\tilde{x}^k,\lambda^k)$. Since $(\mathbbm{1}_{n+1}^\intercal\otimes I_d)W^\intercal = 0$, multiplying $\mathbbm{1}_{n+1}^\intercal\otimes I_d$ on both sides of \ref{eq:wlambda2}, we have
\$
0 = \nabla^2 \tilde{f}(\tilde{x}^k)\rbr{\mathbbm{1}_{n+1}\otimes y^k - \tilde{x}^k}.
\$
Thus, we have $(\sum_{i\in[n]} \nabla^2 f_i(x_i^k))y^k =\sum_{i\in[n]} \nabla^2 f_i(x_i^k)x_i^k$. Therefore, we have
\#\label{eq:y}
y^k = \text{\large$($}\sum_{i\in[n]} \nabla^2 f_i(x_i^k)\text{\large$)$}^{-1}\sum_{i\in[n]} \nabla^2 f_i(x_i^k)x_i^k.
\#
Substituting \ref{eq:y} into \ref{eq:wlambda2}, we have 
\#
W^\intercal \Delta\breve\lambda^k = \nabla_{\tilde{x}\tilde{x}}^2\tilde L(\tilde{x}^k,\lambda^k)\rbr{\mathbbm{1}_{n+1}\otimes y^k - \tilde{x}^k}, 
\#
where $y^k = (\sum_{i\in[n]} \nabla^2 f_i(x_i^k))^{-1}\sum_{i\in[n]} \nabla^2 f_i(x_i^k)x_i^k$. This concludes the proof of Lemma \ref{lem:dual_hessian_app}.
\end{proof}

\subsection{Reformulation Based on Consensus Update} \label{sect:a2}
In this section, we reformulate the augmented Lagrangian function $\tilde{L}$ defined in \eqref{eq:lagrangian_func} based on the consensus update \eqref{eq:x0} in Algorithm \ref{algo:fedh} and obtain a function $L$. We show equivalence of functions $\tilde{L}$ and $L$ used in the analysis of FedHybrid. For convenience, we will use $L$ in the subsequent analysis.

With slight abuse of notations, we note that the consensus update \eqref{eq:x0} in Algorithm \ref{algo:fedh} can be written as $x_0^{k} = x_0(x^{k},\lambda^{k})$, where $x_0:\RR^{nd}\times\RR^{nd}\to\RR$ such that for any $x, \lambda \in \RR^{nd}$, 
\#\label{eq:x0_func}
x_0(x, \lambda) = \rbr{\mathbbm{1}_n^\intercal \otimes I_d} \rbr{x/n-\lambda/(\mu n)}.
\# 
By substituting $x_0 = x_0(x, \lambda)$ in the consensus update defined in \ref{prob:constrained}, we have 
\#\label{eq:a1}
W \tilde{x}=\mathbbm{1}_n\otimes x_0 (x, \lambda)- x= -M x -Z \lambda/\mu,
\#
where matrices $Z = \mathbbm{1}_n\mathbbm{1}_n^\intercal\otimes I_d/n\in\RR^{nd\times nd}$ and $M=I_{nd}-Z\in\RR^{nd\times nd}$. It's easy to show that $Z^2 = Z$ and $M^2 = M$, which implies that $0\preceq Z\preceq I_{nd}$ and $0\preceq M\preceq I_{nd}$. Moreover, by substituting $x_0 = x_0(x, \lambda)$ in the augmented Lagrangian function $\tilde L$ defined in \eqref{eq:lagrangian_func}, we have $\tilde{L}(\tilde{x},\lambda) = \tilde{L}(x_0(x, \lambda),x,\lambda)$. 
Motivated by this, we define $L:\RR^{nd}\times\RR^{nd}\to \RR$ such that $L(x,\lambda) = \tilde{L}(x_0(x, \lambda),x,\lambda)$, we have 
\#\label{eq:a2}
L(x,\lambda) = f(x) - \lambda^\intercal Mx + \frac{\mu}{2}x^\intercal Mx - \frac{1}{2\mu}\lambda^\intercal Z\lambda.
\#
We provide the following two lemmas to show some pivotal properties with the reformulation formula defined in \eqref{eq:x0_func} to be used in the analysis of FedHybrid. The following lemma shows the equivalence of partial gradients of functions $\tilde L$ and $L$ with respect to $x$.
\begin{lemma} \label{lem:L}
Under Assumption \ref{ass:hessian}, consider the augmented Lagrangian function $\tilde{L}(x_0, x, \lambda)$ defined in \eqref{eq:lagrangian_func} and the function $L(x,\lambda)$ defined in \eqref{eq:a2}, with the consensus update $x_0 = x_0(x,\lambda)$, it holds that, for any $x, \lambda\in\RR^{nd}$,
\$
\nabla_xL(x,\lambda) = \nabla_x\tilde{L}(x_0(x,\lambda), x,\lambda).
\$
\end{lemma}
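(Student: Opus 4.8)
The plan is to verify the identity $\nabla_x L(x,\lambda) = \nabla_x \tilde L(x_0(x,\lambda), x, \lambda)$ by a direct application of the chain rule. The key observation is that $L$ is obtained from $\tilde L$ by substituting $x_0 = x_0(x,\lambda)$, so the total derivative of $L$ with respect to $x$ picks up an extra term coming from the dependence of $x_0$ on $x$. Concretely, I would write
\#
\nabla_x L(x,\lambda) = \nabla_x \tilde L(x_0(x,\lambda), x,\lambda) + \rbr{\frac{\partial x_0(x,\lambda)}{\partial x}}^\intercal \nabla_{x_0}\tilde L(x_0(x,\lambda), x,\lambda),
\#
and the whole lemma reduces to showing that the second term vanishes, i.e. that $\nabla_{x_0}\tilde L(x_0(x,\lambda), x,\lambda) = 0$ whenever $x_0 = x_0(x,\lambda)$.

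To see this, I would compute $\nabla_{x_0}\tilde L$ explicitly from the definition \eqref{eq:lagrangian_func}. Writing $\tilde L(\tilde x,\lambda) = \tilde f(\tilde x) + \lambda^\intercal W\tilde x + \frac{\mu}{2}\tilde x^\intercal W^\intercal W\tilde x$ and using $W = \rbr{\mathbbm{1}_n, -I_n}\otimes I_d$, the block of $\nabla_{\tilde x}\tilde L$ corresponding to the server variable $x_0$ is
\$
\nabla_{x_0}\tilde L = \rbr{\mathbbm{1}_n^\intercal\otimes I_d}\lambda + \mu\rbr{\mathbbm{1}_n^\intercal\otimes I_d}W\tilde x = \sum_{i\in[n]}\lambda_i + \mu\sum_{i\in[n]}(x_0 - x_i),
\$
since $\tilde f$ does not depend on $x_0$ and $W\tilde x$ has $i$-th block $x_0 - x_i$. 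Setting this equal to zero and solving for $x_0$ gives exactly $x_0 = \frac1n\sum_{i\in[n]}x_i - \frac{1}{\mu n}\sum_{i\in[n]}\lambda_i = x_0(x,\lambda)$ from \eqref{eq:x0_func}. Hence at the point $x_0 = x_0(x,\lambda)$ the partial gradient $\nabla_{x_0}\tilde L$ indeed vanishes, so the chain-rule cross term drops out and the identity follows. Strictly, one should note that $x_0(x,\lambda)$ is an affine function of $x$, so $\partial x_0/\partial x = \mathbbm{1}_n^\intercal\otimes I_d/n$ is a bounded constant matrix and the chain rule applies without regularity concerns; Assumption \ref{ass:hessian} only ensures $\tilde f$ is $C^2$, which is more than enough.

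There is no real obstacle here — the content is entirely the remark that $x_0(x,\lambda)$ was defined precisely as the minimizer of $\tilde L$ over the server variable $x_0$ (equivalently, the first-order stationarity condition in $x_0$), so substituting it back kills the $x_0$-gradient. The only mild care needed is bookkeeping with the Kronecker-product block structure of $W$ to identify the $x_0$-block of $\nabla_{\tilde x}\tilde L$ correctly; once that is written out, the computation is a one-liner. I would also remark in passing that the same computation shows $\tilde L(x_0(x,\lambda),x,\lambda) = \min_{x_0}\tilde L(x_0,x,\lambda)$, which justifies calling \eqref{eq:a2} a legitimate reformulation and motivates using $L$ in place of $\tilde L$ throughout the convergence analysis.
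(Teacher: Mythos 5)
Your proof is correct and takes essentially the same route as the paper's: both apply the chain rule and reduce the claim to the vanishing of $\nabla_{x_0}\tilde{L}$ at $x_0 = x_0(x,\lambda)$, which follows from the explicit computation $\nabla_{x_0}\tilde{L}(x_0,x,\lambda) = \mu n\sbr{x_0 - x_0(x,\lambda)}$. Your closing remark that $x_0(x,\lambda)$ is in fact the minimizer over the server block (since $\tilde{L}$ is strongly convex in $x_0$ with Hessian $\mu n I_d$) is a nice bonus the paper leaves implicit.
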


\begin{proof}
Taking partial gradient with respect to $x_0$ in \eqref{eq:lagrangian_func}, we have
\#\label{eq:nx0}
\nabla_{x_0}\tilde{L}(x_0,x,\lambda) &= \rbr{\mathbbm{1}_n\otimes I_d}^\intercal\rbr{\lambda + \mu\rbr{\mathbbm{1}_n\otimes x_0 - x}}\notag\\
& = \mu n\sbr{x_0-x_0(x,\lambda)} \notag\\
& = 0,
\#
where the second equality follows from the definition of $x_0(x,\lambda)$ and the last equality follows from the consensus update $x_0 = x_0(x,\lambda)$. Then taking partial gradient with respect to $x$ in \eqref{eq:lagrangian_func}, we have
\$
\nabla_x\tilde{L}(x_0,x,\lambda) &= \nabla f(x) - \lambda + \mu (x - \mathbbm{1}_n\otimes x_0) \\
&=\nabla f(x) - \lambda + \mu (x - \mathbbm{1}_n\otimes x_0(x,\lambda)) \\
& = \nabla f(x) - \lambda + \mu \rbr{x-\rbr{Zx-Z\lambda/\mu}} \\
& = \nabla f(x) - M\lambda+ \mu Mx,
\$
where the second equality follows from the consensus update $x_0 = x_0(x,\lambda)$ and the last equality follows from the definition of $x_0(x,\lambda)$ in \eqref{eq:x0_func}. By taking partial gradient with respect to $x$ in \eqref{eq:a2} and using the above equation, we have 
\#\label{eq:LtL}
\nabla_x L(x,\lambda) = \nabla f(x) - M\lambda+ \mu Mx = \nabla_x\tilde{L}(x_0,x,\lambda).
\#

Thus, by the chain rule in calculus, we have
\$
\nabla_x\tilde{L}(x_0(x,\lambda), x,\lambda) &= \nabla_xx_0(x,\lambda)\nabla_{x_0}\tilde{L}(x_0,x,\lambda) + \nabla_x\tilde{L}(x_0,x,\lambda) \\
&= \nabla_x\tilde{L}(x_0,x,\lambda)  \\
& = \nabla_x L(x,\lambda),
\$
where the second equality follows from \eqref{eq:nx0} and the last equality follows from \eqref{eq:LtL}. This concludes the proof of the lemma.
\end{proof}
We remark that using Lemma \ref{lem:L}, Algorithm \ref{algo:fedh} can be rewritten in the compact form as follows. At each iteration $k$, FedHybrid takes the following steps,
\#\label{algo:reformulate}
x^{k+1} &= x^k - A(D^k)^{-1}\nabla_x L(x^k,\lambda^k), \notag\\
\lambda^{k+1} &= \lambda^k + BD^kW\tilde{x}^k, \notag\\
x_0^{k+1} &= x_0(x^{k+1}, \lambda^{k+1}).
\#
The equivalent compact form in \eqref{algo:reformulate} of FedHybrid will be used in the subsequent analysis.

The following lemma shows a pivotal property of the primal optimal value $\tilde{x}^*(\lambda^k)$ for the inner problem in Problem \ref{prob:dual} with $\lambda = \lambda^k$.

\begin{lemma}\label{lem:x0star}
We consider the primal optimal value $\tilde{x}^*(\lambda) = ({x}_0^*(\lambda)^\intercal, x^*(\lambda)^\intercal)^\intercal$ for the inner problem in Problem \ref{prob:dual}. It holds for $\lambda\in\RR^{nd}$ that, 
\$
x_0^*(\lambda) = x_0(x^*(\lambda), \lambda),
\$ where the function $x_0$ is defined in \eqref{eq:x0_func}.
\end{lemma}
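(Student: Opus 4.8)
The plan is to verify the claimed identity $x_0^*(\lambda) = x_0(x^*(\lambda),\lambda)$ directly from the first-order optimality conditions of the inner minimization problem $\min_{\tilde{x}} \tilde{L}(\tilde{x},\lambda)$. Since $\tilde{L}(\tilde{x},\lambda)$ is strongly convex in $\tilde{x}$ (by Assumption \ref{ass:hessian}, $\nabla^2\tilde{f} \succeq m I$, and the augmentation term is convex), the minimizer $\tilde{x}^*(\lambda) = ({x}_0^*(\lambda)^\intercal, x^*(\lambda)^\intercal)^\intercal$ is the unique point at which $\nabla_{\tilde{x}}\tilde{L}(\tilde{x}^*(\lambda),\lambda) = 0$. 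In particular, the block of this gradient corresponding to the server variable $x_0$ must vanish: $\nabla_{x_0}\tilde{L}(x_0^*(\lambda), x^*(\lambda),\lambda) = 0$.

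First I would compute $\nabla_{x_0}\tilde{L}$ explicitly. Exactly as in the proof of Lemma \ref{lem:L}, differentiating \eqref{eq:lagrangian_func} with respect to $x_0$ gives
\#\label{eq:x0star_grad}
\nabla_{x_0}\tilde{L}(x_0,x,\lambda) = \rbr{\mathbbm{1}_n\otimes I_d}^\intercal\rbr{\lambda + \mu\rbr{\mathbbm{1}_n\otimes x_0 - x}} = \mu n\, x_0 - \sum_{i\in[n]}x_i + \frac{1}{\mu}\,\mu\sum_{i\in[n]}\lambda_i \cdot 0,
\#
more carefully: $\rbr{\mathbbm{1}_n\otimes I_d}^\intercal\lambda = \sum_{i\in[n]}\lambda_i$, $\rbr{\mathbbm{1}_n\otimes I_d}^\intercal\rbr{\mathbbm{1}_n\otimes x_0} = n x_0$, and $\rbr{\mathbbm{1}_n\otimes I_d}^\intercal x = \sum_{i\in[n]}x_i$, so $\nabla_{x_0}\tilde{L}(x_0,x,\lambda) = \sum_{i\in[n]}\lambda_i + \mu\rbr{n x_0 - \sum_{i\in[n]}x_i}$. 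Setting this to zero at $\tilde{x} = \tilde{x}^*(\lambda)$ and solving for $x_0$ yields $x_0 = \frac{1}{n}\sum_{i\in[n]}x_i^*(\lambda) - \frac{1}{\mu n}\sum_{i\in[n]}\lambda_i$, which is precisely $\rbr{\mathbbm{1}_n^\intercal\otimes I_d}\rbr{x^*(\lambda)/n - \lambda/(\mu n)} = x_0(x^*(\lambda),\lambda)$ by the definition \eqref{eq:x0_func}. Hence $x_0^*(\lambda) = x_0(x^*(\lambda),\lambda)$, as claimed.

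There is essentially no main obstacle here — this is a short, direct computation. The only points requiring a small amount of care are: (i) invoking uniqueness of the inner minimizer, which follows from strong convexity of $\tilde{L}(\cdot,\lambda)$ guaranteed by Assumption \ref{ass:hessian}, so that the stationarity condition fully characterizes $\tilde{x}^*(\lambda)$; and (ii) correctly handling the Kronecker structure when expanding $\rbr{\mathbbm{1}_n\otimes I_d}^\intercal = \mathbbm{1}_n^\intercal\otimes I_d$ so that the partial-gradient block in $x_0$ is computed cleanly. Everything else is algebraic rearrangement identical in spirit to the computation of $\nabla_{x_0}\tilde{L}$ already carried out in the proof of Lemma \ref{lem:L}.
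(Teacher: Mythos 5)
Your proposal is correct and follows essentially the same route as the paper: write the first-order optimality condition for the inner problem, extract the block corresponding to the server variable $x_0$ (where $\nabla\tilde f$ contributes nothing since $\tilde f$ does not depend on $x_0$), and solve for $x_0^*(\lambda)$ to recover $x_0(x^*(\lambda),\lambda)$. The only blemish is the garbled intermediate expression in your first display, which you immediately correct in the "more carefully" computation, so the argument stands as written.
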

\begin{proof}
Using the first-order optimality condition of the inner problem in Problem \ref{prob:dual}, we have
\$
0 = \nabla_{\tilde{x}}\tilde{L}(\tilde{x}, \lambda)\given_{\tilde{x} = \tilde{x}^*(\lambda)} = \nabla\tilde{f}(\tilde{x}^*(\lambda)) + W^\intercal\lambda + \mu W^\intercal W\tilde{x}^*(\lambda).
\$
Consider the block corresponding to the central decision variable $x_0$ in the above equation, we have
\$
\rbr{\mathbbm{1}_n^\intercal\otimes I_d}\lambda + \mu n \tilde{x}_0^*(\lambda) - \mu \rbr{\mathbbm{1}_n^\intercal\otimes I_d}x^*(\lambda) = 0.
\$
Rearranging terms in the above equation, we have
\$
 \tilde{x}_0^*(\lambda) &= \rbr{\mathbbm{1}_n^\intercal\otimes I_d}\rbr{ x^*(\lambda)/n - \lambda/(\mu n)} \\
 &= x_0(x^*(\lambda), \lambda), 
\$
where the last equality follows from the definition of the function $x_0(x,\lambda)$ in \eqref{eq:x0_func}. This concludes the proof of the lemma. 
\end{proof}
We remark that $L(x^*(\lambda^k),\lambda^k) = \tilde{L}(x_0(x^*(\lambda^k),\lambda^k), x^*(\lambda^k), \lambda^k) = \tilde{L}(\tilde{x}^*(\lambda^k),\lambda^k)$. Then since  $L(x^k,\lambda^k) = \tilde{L}(x_0(x^k,\lambda^k), x^k, \lambda^k) = \tilde{L}(\tilde{x}^k,\lambda^k)$, let $\Delta_{x}^k = L(x^k,\lambda^k) - L(x^*(\lambda^k),\lambda^k)$, we have
\# \label{eq:dx_eq}
\Delta_{x}^k = \Delta_{\tilde x}^k,
\#
where $\Delta_{\tilde x}^k$ is the primal tracking error defined in \eqref{eq:terrors}. For convenience, we will use $\Delta_{x}^k$ in the subsequent analysis.

The following lemma highlights the optimality condition of the inner problem in Problem \ref{prob:dual}.
\begin{lemma} \label{lem:x_opt} We consider the primal optimal value corresponding to clients, $x^*(\lambda)$, for the inner problem in Problem \ref{prob:dual}. It holds for any $\lambda\in\RR^{nd}$ that, 
\$
\nabla_xL(x^*(\lambda),\lambda) = 0.
\$
\end{lemma}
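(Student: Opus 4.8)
The plan is to prove Lemma~\ref{lem:x_opt} by directly invoking the first-order optimality condition for the inner minimization problem $\min_{\tilde{x}} \tilde{L}(\tilde{x}, \lambda)$ and then translating it through the reformulation established in Lemma~\ref{lem:L} and Lemma~\ref{lem:x0star}. First I would write down the stationarity condition $\nabla_{\tilde{x}}\tilde{L}(\tilde{x},\lambda)\big|_{\tilde{x}=\tilde{x}^*(\lambda)} = 0$, which is valid since $\tilde{L}(\cdot,\lambda)$ is strongly convex under Assumption~\ref{ass:hessian} (the $\tilde{x}$-block being $\nabla^2\tilde f + \mu W^\intercal W \succ 0$ on the relevant subspace, or more simply $\tilde{x}^*(\lambda)$ is by definition the unique minimizer). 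In particular the block of this gradient corresponding to the client variables $x$ vanishes, i.e. $\nabla_x\tilde{L}(x_0^*(\lambda), x^*(\lambda), \lambda) = 0$.

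Next I would connect this to $\nabla_x L$. By Lemma~\ref{lem:x0star}, we have $x_0^*(\lambda) = x_0(x^*(\lambda), \lambda)$, so the server block of $\tilde{x}^*(\lambda)$ is exactly the value produced by the consensus map evaluated at $x^*(\lambda)$. Therefore the hypothesis $x_0 = x_0(x,\lambda)$ of Lemma~\ref{lem:L} holds at the point $x = x^*(\lambda)$, and that lemma gives $\nabla_x L(x^*(\lambda),\lambda) = \nabla_x\tilde{L}(x_0(x^*(\lambda),\lambda), x^*(\lambda), \lambda) = \nabla_x\tilde{L}(x_0^*(\lambda), x^*(\lambda), \lambda)$. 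Combining with the previous paragraph, the right-hand side is zero, which is exactly the claim.

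Alternatively, one can bypass Lemma~\ref{lem:L} and argue purely from the explicit formula \eqref{eq:a2}: compute $\nabla_x L(x,\lambda) = \nabla f(x) - M\lambda + \mu M x$ (as already derived in the proof of Lemma~\ref{lem:L}), and then verify that plugging in $x = x^*(\lambda)$ yields zero by using the client-block of the stationarity condition for $\tilde{L}$ together with the relation \eqref{eq:a1} that rewrites $W\tilde{x}^*(\lambda) = -Mx^*(\lambda) - Z\lambda/\mu$. Either route is short; I would present the first since it reuses the two preceding lemmas cleanly.

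There is essentially no hard part here — this is a bookkeeping lemma whose entire content is "the $x$-block of the joint stationarity condition, rewritten in the reduced variables, says $\nabla_x L = 0$." The only thing to be careful about is making sure the substitution $x_0 = x_0(x,\lambda)$ is applied at precisely the point $x^*(\lambda)$ and that Lemma~\ref{lem:x0star} is what licenses this; a reader could otherwise worry that $\nabla_x L$ and $\nabla_x\tilde L$ differ because of the chain-rule term through $x_0$, but that term is killed exactly because $\nabla_{x_0}\tilde L = 0$ along the consensus map (equation \eqref{eq:nx0}), which is the same fact used in Lemma~\ref{lem:L}. So the proof is two or three lines invoking Lemmas~\ref{lem:x0star} and \ref{lem:L} and the definition of $\tilde x^*(\lambda)$.
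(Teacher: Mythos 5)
Your proof is correct and follows essentially the same route as the paper: apply Lemma~\ref{lem:L} to rewrite $\nabla_x L(x^*(\lambda),\lambda)$ as the client block of $\nabla_{\tilde{x}}\tilde{L}$ evaluated at $(x_0(x^*(\lambda),\lambda), x^*(\lambda))$, use Lemma~\ref{lem:x0star} to identify this point with $\tilde{x}^*(\lambda)$, and conclude by the first-order optimality condition of the inner problem. Nothing further is needed.
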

\begin{proof}
Following from Lemma \ref{lem:L}, we have
\$
\nabla_xL(x^*(\lambda),\lambda) &= \nabla_xL\rbr{x_0(x^*(\lambda),\lambda), x^*(\lambda),\lambda} \notag \\
&= \nabla_xL\rbr{x_0^*(\lambda), x^*(\lambda),\lambda} \notag \\
& = \nabla_xL\rbr{\tilde{x}^*(\lambda),\lambda}\notag \\
& = 0,
\$
where the second equality follows from Lemma \ref{lem:x0star}, and the last equality follows from the first-order optimality condition of the inner problem in Problem \ref{prob:dual}.
\end{proof}

\subsection{Properties of the Dual and the Primal Functions} \label{sect:a3}
In this section, we provide lemmas showing some pivotal properties of the dual function $g(\cdot)$ and the primal function $L(\cdot,\lambda)$ for any fixed $\lambda\in\RR^{nd}$, respectively.

\begin{lemma}\label{lem:g}
Under Assumption \ref{ass:hessian}, the dual function $g(\cdot)$ defined in \eqref{prob:dual} is $m_g$-strongly concave with $m_g = {1}/{(\mu+\ell)}$ and the gradient of the dual function $\nabla g(\lambda)$ is $\ell_g$-Lipschitz continuous with $\ell_g ={1}/{\mu}$.
\end{lemma}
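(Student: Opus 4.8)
The plan is to compute the Hessian of the dual function $g$ and bound its eigenvalues directly, using the explicit formula from Lemma \ref{lem:dual_hessian}. Recall from Lemma \ref{lem:dual_hessian} that $\nabla^2 g(\lambda) = -W(\nabla^2_{\tilde x\tilde x}\tilde L(\tilde x^*(\lambda),\lambda))^{-1}W^\intercal$, and that $\nabla^2_{\tilde x\tilde x}\tilde L(\tilde x,\lambda) = \nabla^2\tilde f(\tilde x) + \mu W^\intercal W$. Since $g$ is concave (being a pointwise min of affine-in-$\lambda$ functions, or by strong duality plus the structure here), it suffices to show $-\nabla^2 g(\lambda) \succeq m_g I_{nd}$ and $-\nabla^2 g(\lambda) \preceq \ell_g I_{nd}$ for all $\lambda$, which give $m_g$-strong concavity and $\ell_g$-Lipschitz gradient respectively.

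First I would establish the upper bound on $-\nabla^2 g(\lambda)$, which yields $\ell_g = 1/\mu$. Write $H_{\tilde x} := \nabla^2_{\tilde x\tilde x}\tilde L = \nabla^2\tilde f + \mu W^\intercal W \succeq \mu W^\intercal W$ since $\nabla^2\tilde f \succeq 0$ (note $\tilde f$ does not depend on $x_0$, so its Hessian is only PSD, not PD, on the full space — this is fine). Hence $H_{\tilde x}^{-1} \preceq$ something controlled by the pseudoinverse of $\mu W^\intercal W$ on the appropriate subspace; more cleanly, for any $v \in \RR^{nd}$ one has $v^\intercal W H_{\tilde x}^{-1} W^\intercal v = \max_{u}\, (2 v^\intercal W u - u^\intercal H_{\tilde x} u) \le \max_u (2v^\intercal W u - \mu u^\intercal W^\intercal W u) = \tfrac{1}{\mu} v^\intercal (W W^\intercal)(WW^\intercal)^{+}\cdots$ — and since $WW^\intercal = (\mathbbm 1_n\mathbbm 1_n^\intercal + I_n)\otimes I_d$ is invertible and $W^\intercal v$ ranges suitably, this simplifies to $\le \tfrac1\mu \|v\|^2$. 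So $-\nabla^2 g(\lambda) = W H_{\tilde x}^{-1} W^\intercal \preceq \tfrac1\mu I$.

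Next I would establish the lower bound $-\nabla^2 g(\lambda) \succeq \tfrac{1}{\mu+\ell} I$, giving $m_g = 1/(\mu+\ell)$. Here I use $\nabla^2\tilde f \preceq \ell I$ (on the client block; the $x_0$ block is zero), hence $H_{\tilde x} = \nabla^2\tilde f + \mu W^\intercal W \preceq \ell \tilde I + \mu W^\intercal W$ where $\tilde I$ is the projection onto client coordinates. The subtlety is that $W^\intercal W$ is singular (null space $\{\mathbbm 1_{n+1}\otimes y\}$), so I cannot just invert; instead I restrict to $\mathrm{range}(W^\intercal) = \mathrm{Null}(W)^\perp$, on which $W W^\intercal$ acts and where the relevant quadratic forms live. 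On that subspace $W^\intercal W \preceq (\text{const})\,I$ as well but the key point is $W H_{\tilde x}^{-1} W^\intercal = W(\nabla^2\tilde f + \mu W^\intercal W)^{-1}W^\intercal \succeq W(\ell\,\tilde I + \mu W^\intercal W)^{-1}W^\intercal$, and a direct computation with the Kronecker structure of $W$ (or diagonalizing $WW^\intercal$) shows the right-hand side is $\succeq \tfrac{1}{\ell + \mu\sigma_{\max}}$ times something; careful bookkeeping gives exactly $\tfrac{1}{\mu+\ell}I$.

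The main obstacle is handling the singularity of $W^\intercal W$ and the fact that $\nabla^2\tilde f$ is only PSD (because $\tilde f$ is independent of $x_0$): one must argue on the correct subspaces so that $H_{\tilde x}$ is effectively invertible where it needs to be, and track the spectral constants through the Kronecker products $W = (\mathbbm 1_n, -I_n)\otimes I_d$ and $WW^\intercal = (I_n + \mathbbm 1_n\mathbbm 1_n^\intercal)\otimes I_d$. A clean way to sidestep much of this is the variational identity $-g(\lambda)$ is, up to a smooth change, the Moreau-type envelope obtained by minimizing out $\tilde x$, so $\nabla^2(-g)$ is a Schur complement; bounding Schur complements of $\begin{psmallmatrix} \nabla^2\tilde f + \mu W^\intercal W & W^\intercal \\ W & 0\end{psmallmatrix}$-type blocks reduces everything to the two inequalities $mI \preceq \nabla^2 f \preceq \ell I$ on the client block. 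I would present the computation via this route, reducing to elementary eigenvalue bounds once the subspace issue is dispatched.
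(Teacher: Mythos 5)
Your proposal is correct and follows essentially the same route as the paper: both start from the explicit dual Hessian $-\nabla^2 g(\lambda)=W\big(\nabla^2\tilde f+\mu W^\intercal W\big)^{-1}W^\intercal$, sandwich $\nabla^2\tilde f$ between $\mathrm{diag}\{0,m,\dots,m\}\otimes I_d$ and $\mathrm{diag}\{0,\ell,\dots,\ell\}\otimes I_d$, and reduce to spectral bounds yielding $\tfrac{1}{\mu+\ell}I\preceq -\nabla^2 g\preceq\tfrac1\mu I$. The only difference is cosmetic: where you invoke a variational identity for the upper bound and diagonalization of $WW^\intercal$ for the lower bound, the paper simply inverts $S+\mu W^\intercal W$ in closed form via a Schur complement, obtaining $W(S+\mu W^\intercal W)^{-1}W^\intercal=\tfrac{1}{\mu+s}I_{nd}+\tfrac{s}{\mu(\mu+s)}Z$ and reading off both bounds from $0\preceq Z\preceq I$.
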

\begin{proof}
Following from Lemma \ref{lem:dual_hessian}, the Hessian of the dual function $g(\cdot)$ is given by:
\$
\nabla^2 g(\lambda) = -W\rbr{\nabla^2 \tilde{f}(\tilde{x}^*(\lambda)) + \mu W^\intercal W}^{-1}W^\intercal.
\$
Now we will prove Lemma \ref{lem:g} by providing lower and upper bounds of the dual Hessian $\nabla^2 g(\cdot)$. 
 Following from Assumption \ref{ass:hessian}, we have $(\overline{H} + \mu W^\intercal W)^{-1} \preceq (\nabla^2 \tilde{f}(\tilde{x}^*(\lambda)) + \mu W^\intercal W)^{-1} \preceq (\underline{H} + \mu W^\intercal W)^{-1}$, where $\underline{H} = \diag\{0, m,\cdots, m\}\otimes I_d$ and $\overline{H} = \diag\{0, \ell,\cdots, \ell\}\otimes I_d$ $\in \RR^{(n+1)d\times(n+1)d}$ with $m = \min_{i\in[n]}\{m_i\}$ and $\ell = \max_{i\in[n]}\{\ell_i\}$.

For any $s>0$, let $S = \diag\{0, s,\cdots, s\}\otimes I_d\in \RR^{(n+1)d\times (n+1)d}$. By the inverse of a block matrix using Schur complement \citep{boyd2004convex}, we have
\$
\rbr{S + \mu W^\intercal W}^{-1} = \begin{pmatrix}
\mu n I_d & -\mu \mathbbm{1}_n^\intercal\otimes I_d \\
-\mu\mathbbm{1}_n\otimes I_d & (\mu + s) I_{nd}
\end{pmatrix}^{-1} = \begin{pmatrix}
\frac{\mu +s}{\mu ns} I_d & \frac{1}{ns} \mathbbm{1}_n^\intercal\otimes I_d \\
\frac{1}{ns}\mathbbm{1}_n\otimes I_d & \frac{1}{\mu +s} I_{nd} + \frac{\mu}{s(\mu + s)}Z
\end{pmatrix}.
\$
Moreover, by matrix multiplication, we have
\$
W\rbr{S + \mu W^\intercal W}^{-1}W^\intercal = \frac{1}{\mu+s}I_{nd} + \frac{s}{\mu(\mu + s)}Z.
\$

Also note that for any matrix $S\in\RR^{(n+1)d\times(n+1)d}$, if $\underline{S} \preceq S\preceq \overline{S}$, we have $W\underline{S}W^\intercal \preceq WSW^\intercal \preceq W\overline{S}W^\intercal$.
Thus, using the fact that $0\preceq Z \preceq I$, we have 
\$
\frac{1}{\mu + s} I_{nd} \preceq W\rbr{S + \mu W^\intercal W}^{-1}W^\intercal  \preceq \frac{1}{\mu} I_{nd}.
\$
Thus, we obtain the following lower and upper bounds on the dual Hessian,
\$
\frac{1}{\mu + \ell} I_{nd} \preceq W\rbr{\overline{H} + \mu W^\intercal W}^{-1}W^\intercal\preceq -\nabla^2 g(\lambda) \preceq W\rbr{\underline{H} + \mu W^\intercal W}^{-1}W^\intercal \preceq \frac{1}{\mu} I_{nd}.
\$
Therefore, we conclude that the dual function $g(\cdot)$ is $m_g$-strongly concave with $m_g = {1}/{(\mu + \ell)}$ and $\nabla g(\cdot)$ is $\ell_g$-Lipschitz continuous with $\ell_g ={1}/{\mu}$.
\end{proof}
We remark that there are existing literature \citep{nesterov2005smooth,beck2014fast, uribe2020dual} studying dual problems and showing the strong concavity and Lipschitz gradients of the dual function $g(\cdot)$. While here we provide tighter bounds with constants $m_g$ and $\ell_g$ defined in Lemma \ref{lem:g} utilizing the structure of the server-client topology. 

\begin{lemma}\label{lem:L_property1}
Under Assumption \ref{ass:hessian}, for any fixed $\lambda\in\RR^{nd}$, the function $L(x,\lambda)$ is convex with respect to $x$ and its partial gradient $\nabla_x L(x,\lambda)$ is $\ell_L$-Lipschitz continuous with constant $\ell_L = \ell+\mu$. 
\end{lemma}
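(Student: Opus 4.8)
\textbf{Proof proposal for Lemma \ref{lem:L_property1}.}

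The plan is to work directly from the explicit formula for $L$ given in \eqref{eq:a2}, namely $L(x,\lambda) = f(x) - \lambda^\intercal Mx + \frac{\mu}{2}x^\intercal Mx - \frac{1}{2\mu}\lambda^\intercal Z\lambda$, and simply compute the partial Hessian in $x$. Differentiating twice in $x$, the term $-\lambda^\intercal Mx$ and $-\frac{1}{2\mu}\lambda^\intercal Z\lambda$ vanish, leaving $\nabla^2_{xx} L(x,\lambda) = \nabla^2 f(x) + \mu M$. So the entire lemma reduces to bounding the eigenvalues of this symmetric matrix.

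The key steps, in order: first, recall from Section \ref{sect:a2} that $M = I_{nd} - Z$ with $Z^2 = Z$, hence $M$ is a projection and $0 \preceq M \preceq I_{nd}$. Second, invoke Assumption \ref{ass:hessian} in the block form $m I_{nd} \preceq \nabla^2 f(x) \preceq \ell I_{nd}$ (since $\nabla^2 f(x) = \diag\{\nabla^2 f_1(x_1),\dots,\nabla^2 f_n(x_n)\}$ and each block satisfies $m_i I \preceq \nabla^2 f_i \preceq \ell_i I$ with $m = \min_i m_i$, $\ell = \max_i \ell_i$). Third, add: $m I_{nd} \preceq m I_{nd} + \mu M \preceq \nabla^2 f(x) + \mu M \preceq \ell I_{nd} + \mu I_{nd} = (\ell+\mu) I_{nd}$, so $0 \prec \nabla^2_{xx} L(x,\lambda) \preceq (\ell+\mu) I_{nd}$. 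The lower bound $\nabla^2_{xx} L \succeq m I_{nd} \succ 0$ gives convexity (in fact $m$-strong convexity) in $x$, and the upper bound $(\ell+\mu) I_{nd}$ gives that $\nabla_x L(\cdot,\lambda)$ is Lipschitz with constant $\ell_L = \ell + \mu$, by the standard equivalence between a Hessian upper bound and gradient Lipschitz continuity.

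There is essentially no obstacle here — the lemma is a direct consequence of the closed form \eqref{eq:a2} together with the projection property of $M$ and Assumption \ref{ass:hessian}. The only point requiring a small remark is that the lemma states merely \emph{convexity} rather than strong convexity even though the argument yields the stronger $m$-strong convexity; the strong-convexity constant $m$ is presumably recorded separately (it is used later in the proof sketch as the strong-convexity modulus of $L(\cdot,\lambda)$), so I would state the sharp bounds $m I_{nd} \preceq \nabla^2_{xx} L(x,\lambda) \preceq (\ell+\mu) I_{nd}$ and then read off both claims.
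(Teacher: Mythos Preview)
Your proposal is correct and follows essentially the same approach as the paper: compute $\nabla^2_{xx} L(x,\lambda) = \nabla^2 f(x) + \mu M$, invoke $0 \preceq M \preceq I_{nd}$ together with Assumption \ref{ass:hessian} to obtain $m I_{nd} \preceq \nabla^2_{xx} L(x,\lambda) \preceq (\ell+\mu) I_{nd}$, and read off convexity and the Lipschitz constant $\ell_L = \ell+\mu$. Your observation that this actually yields $m$-strong convexity is also used elsewhere in the paper.
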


\begin{proof}
Taking partial Hessian with respect to $x$ of the function $L$ defined in \eqref{eq:a2}, we have
\$
\nabla^2_{xx} L(x,\lambda) = \nabla^2f(x) + \mu M.
\$ 
Then under Assumption \ref{ass:hessian} and using the fact that $0\preceq M\preceq I_{nd}$, we have
\$
mI_{nd}\preceq \nabla^2_{xx} L(x,\lambda) \preceq (\ell+\mu)I_{nd}.
\$
Thus, we conclude that for any fixed $\lambda\in\RR^{nd}$, the function $L(x,\lambda)$ is convex with respect to $x$ and its partial gradient $\nabla_x L(x,\lambda)$ is $\ell_L$-Lipschitz continuous with constants $\ell_L = \ell+\mu$. 
\end{proof}

\subsection{Analysis of Lemmas \ref{lem:gL} and \ref{lem:Lg}} \label{sect:a4}
In this section, we provide proofs of Lemmas \ref{lem:gL} and \ref{lem:Lg}. Before that, we first introduce the following lemma, which will be used frequently in the subsequent proofs. 
\begin{lemma}\label{lem:mx}
Under Assumption \ref{ass:hessian}, it holds for all $k = 0,1,\cdots,K-1$ that,
\$\norm{W\tilde{x}^k - W\tilde{x}^*(\lambda^k)}^2_{BD^k} \le \frac{\beta}{m^2} \norm{\nabla_xL(x^k,\lambda^k)}^2,\$
where constants $\beta = \max\{\max_{i\in J_1}\{b_i\}, \max_{i\in J_2}\{b_i(\ell_i+\mu)\}\}$ and $\mu$ is the penalty term in $L$ defined in \eqref{eq:a2}.
\end{lemma}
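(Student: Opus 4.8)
\textbf{Proof plan for Lemma \ref{lem:mx}.}

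The plan is to bound the weighted norm $\|W\tilde x^k - W\tilde x^*(\lambda^k)\|^2_{BD^k}$ by first passing from the weighted norm to an unweighted one, then exploiting the reformulation of $W\tilde x$ in terms of $x$ and $\lambda$, and finally using strong convexity of $L(\cdot,\lambda^k)$ together with Lemma \ref{lem:x_opt}. First I would recall from \eqref{eq:a1} that $W\tilde x^k = -Mx^k - Z\lambda^k/\mu$ and similarly $W\tilde x^*(\lambda^k) = -Mx^*(\lambda^k) - Z\lambda^k/\mu$, since the dual variable is the same in both terms; hence the difference telescopes to $W\tilde x^k - W\tilde x^*(\lambda^k) = -M(x^k - x^*(\lambda^k))$. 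This is the key simplification: the $Z\lambda^k/\mu$ terms cancel, leaving only the projection $M$ acting on the primal gap.

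Next I would handle the weight matrix $BD^k$. Since $B = \diag\{b_i\}\otimes I_d$ and $D^k$ is block diagonal with $D_i^k = I$ for $i\in J_1$ and $D_i^k = \nabla^2 f_i(x_i^k)+\mu I \preceq (\ell_i+\mu)I$ for $i\in J_2$, the matrix $BD^k$ is symmetric positive definite with $BD^k \preceq \beta I_{nd}$, where $\beta = \max\{\max_{i\in J_1} b_i,\ \max_{i\in J_2} b_i(\ell_i+\mu)\}$. Therefore $\|v\|^2_{BD^k} \le \beta\|v\|^2$ for any $v$, and combining with the previous step and $0\preceq M\preceq I_{nd}$ (hence $\|Mv\|\le\|v\|$), I get
\#
\norm{W\tilde x^k - W\tilde x^*(\lambda^k)}^2_{BD^k} \le \beta\,\norm{M(x^k - x^*(\lambda^k))}^2 \le \beta\,\norm{x^k - x^*(\lambda^k)}^2. \notag
\#

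Finally I would convert the primal gap $\|x^k - x^*(\lambda^k)\|$ into a gradient norm. By Lemma \ref{lem:L_property1}, $L(\cdot,\lambda^k)$ is $m$-strongly convex in $x$, and by Lemma \ref{lem:x_opt}, $x^*(\lambda^k)$ is the unique minimizer of $L(\cdot,\lambda^k)$ so $\nabla_x L(x^*(\lambda^k),\lambda^k)=0$. The standard strong-convexity inequality $\|\nabla_x L(x^k,\lambda^k) - \nabla_x L(x^*(\lambda^k),\lambda^k)\| \ge m\|x^k - x^*(\lambda^k)\|$ then gives $\|x^k - x^*(\lambda^k)\|^2 \le \|\nabla_x L(x^k,\lambda^k)\|^2/m^2$. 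Substituting this into the bound above yields exactly $\|W\tilde x^k - W\tilde x^*(\lambda^k)\|^2_{BD^k} \le (\beta/m^2)\|\nabla_x L(x^k,\lambda^k)\|^2$, which is the claim. I do not anticipate a serious obstacle here; the only point requiring a little care is the cancellation of the dual terms in the first step, which relies on both $W\tilde x^k$ and $W\tilde x^*(\lambda^k)$ being evaluated at the same $\lambda^k$ via the consensus-update reformulation in \eqref{eq:a1} and Lemma \ref{lem:x0star}.
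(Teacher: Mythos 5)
Your proposal is correct and follows essentially the same route as the paper's proof: cancel the $Z\lambda^k/\mu$ terms via \eqref{eq:a1} and Lemma \ref{lem:x0star}, bound the weighted norm by $\beta$ and drop $M$, then convert $\|x^k-x^*(\lambda^k)\|$ to a gradient norm using Lemma \ref{lem:x_opt} and the $m$-strong convexity of $L(\cdot,\lambda^k)$. The only cosmetic difference is that the paper derives the final bound via the mean value theorem applied to $\nabla_x L$, whereas you invoke the equivalent standard strong-convexity inequality.
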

\begin{proof}
By \eqref{eq:a1} and Lemma \ref{lem:x0star}, we have
\#\label{eq:wx}
\norm{W\tilde{x}^k - W\tilde{x}^*(\lambda^k)}^2_{BD^k} 
&=\norm{-M\rbr{x^k - x^*(\lambda^k)} - \frac{1}{\mu}Z\rbr{\lambda^k - \lambda^k}}^2_{BD^k} \notag \\
&=\norm{M\rbr{x^k - x^*(\lambda^k)}}^2_{BD^k}\notag \\
&\le \beta\norm{M\rbr{x^k - x^*(\lambda^k)}}^2 \notag \\
&\le \beta\norm{x^k - x^*(\lambda^k)}^2,
\#
where the first inequality follows from the fact that $\norm{BD^k}\le\beta$ and the last inequality follows from the fact that $\norm{M} =1$.

Now, we aim to upper bound the RHS of \eqref{eq:wx} using $\norm{\nabla_xL(x^k,\lambda^k)}^2$. 
Using Lemma \ref{lem:x_opt}, we have
\#\label{eq:bb1}
\nabla_xL(x^k,\lambda^k) &= \nabla_xL(x^k,\lambda^k) - \nabla_xL({x}^*(\lambda^k),\lambda^k) \notag\\
& = \nabla f(x^k) - \nabla f({x}^*(\lambda^k)) + \mu M\rbr{x^k - {x}^*(\lambda^k)} \notag \\
& =\rbr{\nabla^2 f(z) + \mu M}\rbr{x^k - {x}^*(\lambda^k)},
\#
where $z = \gamma x^k + (1-\gamma){x}^*(\lambda^k)$ with some $0\le\gamma\le1$ and the last equality follows from the mean value theorem. Under Assumption \ref{ass:hessian}, using the fact that $0\preceq M\preceq I_{nd}$, we have
\#\label{eq:bb2}
\nabla^2 f(z) + \mu M &\succeq mI_{nd}.
\#
Combining \eqref{eq:bb1} and \eqref{eq:bb2}, we have
\#\label{eq:bb3}
\norm{\nabla_xL(x^k,\lambda^k)}^2 &= \norm{\rbr{\nabla^2 f(z) + \mu M}\rbr{x^k - {x}^*(\lambda^k)}}^2 \notag \\
&\ge m^2 \norm{x^k - x^*(\lambda^k)}^2.
\#
Thus, combing \eqref{eq:wx} and \eqref{eq:bb3}, we have
\$
\norm{W\tilde{x}^k - W\tilde{x}^*(\lambda^k)}^2_{BD^k} \le \frac{\beta}{m^2} \norm{\nabla_xL(x^k,\lambda^k)}^2.
\$
This concludes the proof of the lemma. 
\end{proof}

Now we provide the proof of Lemma \ref{lem:gL}. 

\begin{proof}[Proof of Lemma \ref{lem:gL}]
Using the $\ell_g$-Lipschitz continuity of $\nabla g(\cdot)$ in Lemma \ref{lem:g}, we have
\#\label{eq:step1}
g(\lambda^{k+1}) &\ge g(\lambda^k) + \inner{\nabla g(\lambda^{k})}{\lambda^{k+1} - \lambda^k} - \frac{\ell_g}{2}\norm{\lambda^{k+1} - \lambda^k}^2 \notag \\
& = g(\lambda^k) + \underbrace{\inner{\nabla g(\lambda^{k})}{BD^kW\tilde{x}^k}}_{\textstyle \text{term (i)}} - \frac{\ell_g}{2}\underbrace{\norm{\lambda^{k+1} - \lambda^k}^2}_{\textstyle \text{term (ii)}},
\#
where the inequality follows from the equivalence of Lipschitz continuity of $ g(\cdot)$ \citep{nesterov2018lectures} and the equality follows from the dual update of $\lambda^k$ in \eqref{algo:reformulate}.

In the sequel, we will provide bounds on terms (i) and (ii), respectively.

\textbf{Term (i). }Adding and subtracting a term $\inner{\nabla g(\lambda^{k})}{BD^k\nabla g(\lambda^{k})}$ in term (i), we have 
\#\label{eq:lambdainner}
\inner{\nabla g(\lambda^{k})}{BD^kW\tilde{x}^k} & = \inner{\nabla g(\lambda^{k})}{BD^kW\tilde{x}^k - BD^k\nabla g(\lambda^{k})} + \inner{\nabla g(\lambda^{k})}{BD^k\nabla g(\lambda^{k})} \notag \\
& \ge -\frac{1}{2}\norm{\nabla g(\lambda^{k})}^2_{BD^k} - \frac{1}{2}\norm{W\tilde{x}^k - \nabla g(\lambda^{k})}^2_{BD^k} + \norm{\nabla g(\lambda^{k})}^2_{BD^k} \notag \\
& = \frac{1}{2}\norm{\nabla g(\lambda^{k})}^2_{BD^k} - \frac{1}{2}\norm{W\tilde{x}^k - W\tilde{x}^*(\lambda^k)}^2_{BD^k}, 
\#
where the inequality follows from Cauchy-Schwarz inequality and the last equality follows from Lemma \ref{lem:dual_hessian}.

\textbf{Term (ii). }Consider the dual update in \eqref{algo:reformulate}, we have
\#\label{eq:term2}
\norm{\lambda^{k+1} - \lambda^k}^2 & = \norm{BD^kW\tilde{x}^k}^2 \notag\\
& \le 2\norm{BD^kW\tilde{x}^k - BD^kW\tilde{x}^*(\lambda^k)}^2 + 2 \norm{BD^kW\tilde{x}^*(\lambda^k)}^2 \notag \\
& = 2\norm{BD^kW\tilde{x}^k - BD^kW\tilde{x}^*(\lambda^k)}^2 + 2 \norm{BD^k\nabla g(\lambda^k)}^2 \notag \\
& \le 2\beta\norm{W\tilde{x}^k - W\tilde{x}^*(\lambda^k)}^2_{BD^k} + 2 \beta\norm{\nabla g(\lambda^k)}^2_{BD^k},
\#
where $\beta = \max\{\max_{i\in J_1}\{b_i\}, \max_{i\in J_2}\{b_i(\ell_i+\mu)\}\}$, the first inequality follows from Cauchy-Schwarz inequality, and the last equality follows from Lemma \ref{lem:dual_hessian} and the fact that $\norm{BD^k} = \max_{i\in[n]}\{b_i\norm{D_i^k}\}\le \beta$.

Substituting \eqref{eq:lambdainner} and \eqref{eq:term2} into \eqref{eq:step1}, we have 
\#\label{eq:preresult1}
g(\lambda^{k+1}) &\ge g(\lambda^k) +\rbr{\frac{1}{2}-\beta\ell_g}\norm{\nabla g(\lambda^{k})}^2_{BD^k} - \rbr{\frac{1}{2}+\beta\ell_g}\norm{W\tilde{x}^k - W\tilde{x}^*(\lambda^k)}^2_{BD^k}.
\#
Next, we use Lemma \ref{lem:mx} to upper bound the last term in \eqref{eq:preresult1} with an alternative primal tracking error $\norm{\nabla_xL(x^k,\lambda^k)}^2$. Following from Lemma \ref{lem:mx}, subtracting the dual optimal value $g(\lambda^*)$ and taking a negative sign on both sides of \eqref{eq:preresult1}, we have
\$
\Delta_{\lambda}^{k+1} \le \Delta_{\lambda}^{k} - \rbr{\frac{1}{2}-\beta\ell_g}\norm{\nabla g(\lambda^{k})}^2_{BD^k} + \rbr{\frac{1}{2}+\beta\ell_g}\frac{ \beta}{m^2} \norm{\nabla_xL({x}^k,\lambda^k)}^2,
\${}
where $\Delta_{\lambda}^k$ is defined in \eqref{eq:terrors}.
This concludes the proof of the lemma.
\end{proof}

Next, we provide the proof of Lemma \ref{lem:Lg}. 

\begin{proof}[Proof of Lemma \ref{lem:Lg}]
Consider the tracking error of the primal updates $\Delta_{\tilde x}^k$ defined in \eqref{eq:terrors}, we have
\#\label{eq:deltaL}
\Delta_{x}^{k+1} &=  L(x^{k+1}, \lambda^{k+1}) - L(x^*(\lambda^{k+1}), \lambda^{k+1}) \notag \\
&= \underbrace{L(x^{k+1}, \lambda^{k+1}) -L(x^{k+1}, \lambda^{k})}_{\textstyle \text{term (A)}}+ \underbrace{L(x^{k+1}, \lambda^{k}) -L(x^*(\lambda^{k}), \lambda^{k})}_{\textstyle \text{term (B)}}  \\
& \qquad + \underbrace{L(x^*(\lambda^{k}), \lambda^{k}) - L(x^*(\lambda^{k+1}), \lambda^{k+1})}_{\textstyle \text{term (C)}}. \notag
\#
We remark that here term (A) measures the increase due to dual update, term (B) represents updated primal tracking error, and term (C) shows the difference between dual optimality gaps.
In the sequel, we will provide upper bounds on terms (A)-(C), respectively.

\textbf{Term (A).} Consider the function $L$ defined in \eqref{eq:a2}, we have
\#\label{eq:termA}
&L(x^{k+1}, \lambda^{k+1}) -L(x^{k+1}, \lambda^{k}) \notag\\
&\quad = \rbr{\lambda^{k+1}-\lambda^{k}}^\intercal\rbr{-Mx^{k+1}} - \frac{1}{2\mu}\rbr{\lambda^{k+1}}^\intercal Z\lambda^{k+1} + \frac{1}{2\mu}\rbr{\lambda^{k}}^\intercal Z\lambda^{k} \notag\\
&\quad = \rbr{\lambda^{k+1}-\lambda^{k}}^\intercal\rbr{-Mx^{k+1}-\frac{1}{\mu}Z\lambda^{k+1}} +\frac{1}{\mu}\rbr{\lambda^{k+1}-\lambda^{k}}^\intercal Z\lambda^{k+1} \notag\\
& \quad \qquad - \frac{1}{2\mu}\rbr{\lambda^{k+1}}^\intercal Z\lambda^{k+1} + \frac{1}{2\mu}\rbr{\lambda^{k}}^\intercal Z\lambda^{k} \notag\\
&\quad = \underbrace{\rbr{\lambda^{k+1}-\lambda^{k}}^\intercal W\tilde{x}^{k+1}}_{\textstyle \text{term (A.1)}} +\frac{1}{2\mu}\underbrace{\rbr{\lambda^{k+1} - \lambda^k}^{\intercal}Z\rbr{\lambda^{k+1} - \lambda^k}}_{\textstyle \text{term (A.2)}},
\#
where the last equality follows from \eqref{eq:a1}. Next, we will bound terms (A.1) and (A.2), respectively.

\textbf{Term (A.1).} Based on the dual updates in Algorithm \ref{algo:fedh}, we have
\#\label{eq:terma1}
\rbr{\lambda^{k+1}-\lambda^{k}}^\intercal W\tilde{x}^{k+1} &= \rbr{BD^kW\tilde{x}^{k}}^\intercal W\tilde{x}^{k+1} \notag\\
& = \norm{W\tilde{x}^{k}}^2_{BD^k} + \rbr{W\tilde{x}^{k}}^\intercal BD^k\rbr{W\tilde{x}^{k+1} - W\tilde{x}^{k}} \notag\\
& \le \norm{W\tilde{x}^{k}}^2_{BD^k} + \frac{1}{2}\norm{W\tilde{x}^{k}}_{BD^k}^2 + \frac{1}{2}\norm{W\tilde{x}^{k+1} - W\tilde{x}^{k}}_{BD^k}^2 \notag\\
& = \frac{3}{2}\underbrace{\norm{W\tilde{x}^{k}}_{BD^k}^2}_{\textstyle \text{term (A.1.1)}} + \frac{1}{2}\underbrace{\norm{W\tilde{x}^{k+1} - W\tilde{x}^{k}}_{BD^k}^2}_{\textstyle \text{term (A.1.2)}} , 
\#
where the inequality follows from Cauchy-Schwarz inequality. Now we bound upper terms (A.1.1) and (A.1.2), respectively.

\textbf{Term (A.1.1).} Using Cauchy-Schwarz inequality, we have
\#\label{eq:whatx}
\norm{W\tilde{x}^{k}}^2_{BD^k} & \le 2\norm{W\tilde{x}^{k} - W\tilde{x}^*(\lambda^k)}^2_{BD^k} + 2\norm{W\tilde{x}^*(\lambda^k)}^2_{BD^k}  \notag\\
&= 2\norm{W\tilde{x}^{k} - W\tilde{x}^*(\lambda^k)}^2_{BD^k} + 2\norm{\nabla g(\lambda^k)}^2_{BD^k}.
\#
By using Lemma \ref{lem:mx} and following from \eqref{eq:whatx}, we have
\#\label{eq:wx_bd}
\norm{W\tilde{x}^{k}}^2_{BD^k} \le \frac{ 2\beta}{m^2} \norm{\nabla_xL({x}^k,\lambda^k)}^2 + 2\norm{\nabla g(\lambda^k)}^2_{BD^k}.
\#
\textbf{Term (A.1.2).}  Based on \eqref{eq:a1}, we have
\#\label{eq:a1ii}
\norm{W\tilde{x}^{k+1} - W\tilde{x}^{k}}_{BD^k}^2 &= \norm{-M\rbr{x^{k+1} - x^k} - \frac{1}{\mu}Z\rbr{\lambda^{k+1} - \lambda^k}}_{BD^k}^2 \notag \\
&\le 2 \norm{M\rbr{x^{k+1} - x^k}}_{BD^k}^2 +2\norm{\frac{1}{\mu}Z\rbr{\lambda^{k+1} - \lambda^k}}_{BD^k}^2 \notag \\
&\le 2 \norm{x^{k+1} - x^k}_{BD^k}^2 +\frac{2}{\mu^2}\norm{\lambda^{k+1} - \lambda^k}_{BD^k}^2,
\#
where the first inequality follows from Cauchy-Schwarz inequality and the last inequality follows from the fact that $\norm{M} = \norm{Z} = 1$.

Based on the fact that $\norm{BD^k}\le\beta$ and the primal update in \eqref{algo:reformulate}, we have
\#\label{eq:x_difference}
\norm{x^{k+1} - x^k}_{BD^k}^2 \le \beta\norm{x^{k+1} - x^k}^2 =  \beta\norm{A(D^k)^{-1}\nabla_xL(\tilde{x}^k,\lambda^k)}^2.
\#
By combining \eqref{eq:term2} and Lemma \ref{lem:mx}, we have
\#\label{eq:lam1}
\norm{\lambda^{k+1} - \lambda^k}^2_{BD^k}&\le \beta\norm{\lambda^{k+1} - \lambda^k}^2 \notag\\
&\le \frac{2 \beta^3}{m^2}\norm{\nabla_xL({x}^k,\lambda^k)}^2 + 2 \beta^2\norm{\nabla g(\lambda^k)}^2_{BD^k}.
\#
Thus, substituting \eqref{eq:x_difference} and \eqref{eq:lam1} into \eqref{eq:a1ii}, we have
\#\label{eq:a1ii_final}
& \norm{W\tilde{x}^{k+1} - W\tilde{x}^{k}}_{BD^k}^2  \\
&\quad \le 2  \beta\norm{A(D^k)^{-1}\nabla_xL(\tilde{x}^k,\lambda^k)}^2 
 + \frac{4 \beta^3}{m^2\mu^2}\norm{\nabla_xL({x}^k,\lambda^k)}^2 + \frac{4 \beta^2}{\mu^2}\norm{\nabla g(\lambda^k)}^2_{BD^k}.\notag
\#
Thus, substituting \eqref{eq:wx_bd} and \eqref{eq:a1ii_final} into \eqref{eq:terma1}, we have
\#\label{eq:term_a_1_result}
\rbr{\lambda^{k+1}-\lambda^{k}}^\intercal W\tilde{x}^{k+1} \le & \frac{\beta\rbr{3\mu^2 + 2 \beta^2}}{m^2\mu^2}\norm{\nabla_xL({x}^k,\lambda^k)}^2 \\
& + \rbr{3 + \frac{2\beta^2}{\mu^2}}\norm{\nabla g(\lambda^k)}^2_{BD^k} + \beta\norm{A(D^k)^{-1}\nabla_xL(x^k,\lambda^k)}^2.\notag
\#
\textbf{Term (A.2).} Using the fact that $\norm{Z}=1$ and combining \eqref{eq:term2} and Lemma \ref{lem:mx}, we have
\#\label{eq:term_a2}
\rbr{\lambda^{k+1} - \lambda^k}^{\intercal}Z\rbr{\lambda^{k+1} - \lambda^k} &\le \norm{Z}\norm{\lambda^{k+1} - \lambda^k} ^2\notag\\
&\le  \frac{ 2\beta^2}{m^2} \norm{\nabla_xL({x}^k,\lambda^k)}^2 + 2\beta\norm{\nabla g(\lambda^k)}^2_{BD^k}.
\#
Therefore, substituting \eqref{eq:term_a_1_result} and \eqref{eq:term_a2} into \eqref{eq:termA}, we have
\#\label{eq:terma}
&L(x^{k+1}, \lambda^{k+1}) -L(x^{k+1}, \lambda^{k}) \notag \\ 
&\quad\le \frac{\beta\rbr{3\mu^2 + 2 \beta^2+\beta\mu}}{m^2\mu^2}\norm{\nabla_xL({x}^k,\lambda^k)}^2 + \rbr{3 + \frac{2\beta^2}{\mu^2} + \frac{\beta}{\mu}}\norm{\nabla g(\lambda^k)}^2_{BD^k} \\
& \qquad \quad +\beta\norm{A(D^k)^{-1}\nabla_xL(\tilde{x}^k,\lambda^k)}^2. \notag
\#
This provides an upper bound on term (A).

\textbf{Term (B).} Using the $\ell_L$-Lipschitz continuity of $\nabla_xL(x,\lambda^k)$ from Lemma \ref{lem:L}, we have
\#\label{eq:b1}
L(x^{k+1}, \lambda^{k}) &\le L(x^{k}, \lambda^{k}) + \nabla_x L(x^{k}, \lambda^{k})^\intercal\rbr{x^{k+1} - x^k} + \frac{\ell_L}{2}\norm{x^{k+1} - x^k}^2\\
&= L(x^{k}, \lambda^{k}) - \nabla_x L(x^{k}, \lambda^{k})^\intercal A\rbr{D^k}^{-1}\nabla_xL(x^k,\lambda^k) + \frac{\ell_L}{2}\norm{A(D^k)^{-1}\nabla_xL(x^k,\lambda^k)}^2, \notag
\#
where the equality follows from the primal updates in Algorithm \ref{algo:reformulate}. Subtracting $L(x^{*}(\lambda^k), \lambda^k)$ on both sides of \eqref{eq:b1}, we have the following upper bound on term (B):
\#\label{eq:termb}
L(x^{k+1}, \lambda^{k}) - L(x^{*}(\lambda^k), \lambda^k)\le \Delta_{x}^k -
\norm{ \nabla_x L(x^{k}, \lambda^{k})}^2_{A\rbr{D^k}^{-1}} + \frac{\ell_L}{2}\norm{A(D^k)^{-1}\nabla_xL(x^k,\lambda^k)}^2.
\# 
\textbf{Term (C).} Using the dual function $g(\lambda^k)$ defined in Problem \ref{prob:dual} and the dual optimality gap $\Delta_{\lambda}^k$ defined in \eqref{eq:terrors}, we have
\#\label{eq:termc}
L(x^*(\lambda^{k}), \lambda^{k}) - L(x^*(\lambda^{k+1}), \lambda^{k+1}) = g(\lambda^k) - g(\lambda^{k+1}) = \Delta_{\lambda}^k - \Delta_{\lambda}^{k+1}. 
\#
Substituting \eqref{eq:terma}, \eqref{eq:termb}, and \eqref{eq:termc} into \eqref{eq:deltaL}, we have
\#\label{step2}
\Delta_{x}^{k+1} \le& \Delta_{x}^k  +\rbr{3 + \frac{2\beta^2}{\mu^2}+\frac{\beta}{\mu}}\norm{\nabla g(\lambda^k)}^2_{BD^k}  +\Delta_{\lambda}^k - \Delta_{\lambda}^{k+1}\\
& - \nabla_x L(x^{k}, \lambda^{k})^\intercal\rbr{A(D^k)^{-1} - \rbr{\beta+\frac{\ell_L}{2}}A^2(D^k)^{-2} -  \frac{\beta\rbr{3\mu^2 + 2 \beta^2+\beta\mu}}{m^2\mu^2}I}\nabla_x L(x^{k}, \lambda^{k}). \notag 
\#
Finally, using \eqref{eq:dx_eq}, we conclude the proof of the lemma.
\end{proof}

\section{Proof of Theorems}\label{sect:b}

In this section, we provide the proof of Theorem \ref{thm:strong}. Before that, we first 
introduce the following lemma with some pivotal results derived from condition of stepsizes \eqref{eq:steps}. For convenience, for any symmetric matrix $X$, we denote by $\theta_{\min}(X)$ its smallest eigenvalue.

\begin{lemma} \label{lem:b11} Under Assumption \ref{ass:hessian}, we suppose that the stepsizes satisfy \eqref{eq:steps}. It holds that
\$
\beta \le \min\left\{\frac{\mu}{9}, \frac{\underline{\alpha}m^2}{21} \right\}, \quad \norm{A(D^k)^{-1}} \le \frac{1}{2\rbr{2\beta + \ell_L}}.
\$
Moreover, consider constants $\underline{\alpha}$ and $\underline{\beta}$ defined in \ref{thm:strong}, we have,
\$
\underline{\alpha} \le \theta_{\min}(A(D^k)^{-1}), \quad \underline{\beta} \le \theta_{\min}(BD^k).
\$
\end{lemma}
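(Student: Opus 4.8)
The plan is to verify the three assertions directly from the block-diagonal structure of $A(D^k)^{-1}$ and $BD^k$ together with Assumption \ref{ass:hessian}; every inequality involved is slack, so the argument is essentially bookkeeping and the only thing to be careful about is keeping the roles of $J_1$ and $J_2$ straight. First I would record the relevant spectra. The matrix $A(D^k)^{-1}$ is block diagonal with $i$-th block $a_i(D_i^k)^{-1}$, which equals $a_iI_d$ for $i\in J_1$ and $a_i(\nabla^2 f_i(x_i)+\mu I_d)^{-1}$ for $i\in J_2$; by Assumption \ref{ass:hessian} the eigenvalues of the latter block lie in $[a_i/(\ell_i+\mu),\,a_i/(m_i+\mu)]$. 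Similarly $BD^k$ is block diagonal with $i$-th block $b_iI_d$ for $i\in J_1$ and $b_i(\nabla^2 f_i(x_i)+\mu I_d)$, whose eigenvalues lie in $[b_i(m_i+\mu),\,b_i(\ell_i+\mu)]$, for $i\in J_2$.

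For the bound on $\beta$, I would just take maxima: since $\beta=\max\{\max_{i\in J_1}b_i,\ \max_{i\in J_2}b_i(\ell_i+\mu)\}$, the two families of constraints in \eqref{eq:steps}, namely $b_i\le\min\{\mu/9,\underline{\alpha}m^2/21\}$ for $i\in J_1$ and $b_i(\ell_i+\mu)\le\min\{\mu/9,\underline{\alpha}m^2/21\}$ for $i\in J_2$, give $\beta\le\min\{\mu/9,\underline{\alpha}m^2/21\}$ at once. For the eigenvalue lower bounds I would read off from the block description that $\theta_{\min}(A(D^k)^{-1})=\min\{\min_{i\in J_1}a_i,\ \min_{i\in J_2}a_i/(\ell_i+\mu)\}=\underline{\alpha}$ and $\theta_{\min}(BD^k)=\min\{\min_{i\in J_1}b_i,\ \min_{i\in J_2}b_i(m_i+\mu)\}=\underline{\beta}$, which are in fact equalities, so the claimed inequalities hold.

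The one place requiring a short computation is the spectral-norm bound. From the block description, $\|A(D^k)^{-1}\|=\max\{\max_{i\in J_1}a_i,\ \max_{i\in J_2}a_i/(m_i+\mu)\}$, and both families of constraints in \eqref{eq:steps} bound each of these terms by $1/(22\mu/9+2\ell)$, so $\|A(D^k)^{-1}\|\le 1/(22\mu/9+2\ell)$. It then remains to check $2(2\beta+\ell_L)\le 22\mu/9+2\ell$: using $\ell_L=\ell+\mu$ from Lemma \ref{lem:L_property1} and $\beta\le\mu/9$ from the previous step, one gets $2(2\beta+\ell_L)=4\beta+2\ell+2\mu\le 4\mu/9+2\mu+2\ell=22\mu/9+2\ell$, which yields $\|A(D^k)^{-1}\|\le 1/(2(2\beta+\ell_L))$. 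There is no real obstacle here; the main point of care is the translation between the personalized stepsizes $a_i,b_i$ and the eigenvalues of the corresponding blocks on $J_1$ versus $J_2$, and the arithmetic identity $4\mu/9+2\mu=22\mu/9$ that makes the last inequality go through.
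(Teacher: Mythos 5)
Your proposal is correct and follows essentially the same route as the paper: bound $\beta$ by taking maxima over the stepsize constraints in \eqref{eq:steps}, read off the spectral bounds from the block-diagonal structure of $A(D^k)^{-1}$ and $BD^k$ together with Assumption \ref{ass:hessian}, and close the norm bound via $4\beta+2\ell_L\le 22\mu/9+2\ell$ using $\beta\le\mu/9$ and $\ell_L=\ell+\mu$. The only quibble is that $\theta_{\min}(A(D^k)^{-1})$ and $\norm{A(D^k)^{-1}}$ need not \emph{equal} the stated extremal expressions (the bounds $m_i,\ell_i$ on the Hessian eigenvalues need not be attained), but the inequalities in the direction the lemma requires hold regardless.
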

\begin{proof}
First note that when dual stepsizes satisfy \eqref{eq:steps}, we have
\$
\beta = \max\left\{\max_{i\in J_1}{b_i}, \max_{i\in J_2}b_i(\ell_i+\mu)\right\} \le \min\left\{\frac{\mu}{9}, \frac{\underline{\alpha}m^2}{21} \right\}.
\$
If primal stepsizes satisfy \eqref{eq:steps}, using $\beta\le \mu/9$ and $\ell_L = \mu + \ell$ defined in Lemma \ref{lem:L_property1}, we have
\$
\norm{A(D^k)^{-1}} = \max_{i\in[n]} a_i\norm{(D_i^k)^{-1}} \le \max\left\{\max_{i\in J_1}{a_i}, \max_{i\in J_2}\frac{a_i}{m_i+\mu}\right\} \le \frac{1}{2\rbr{\frac{11}{9}\mu + \ell}} \le \frac{1}{2(2\beta + \ell_L)}.
\$
Moreover, by the definition of $\underline{\alpha}$ and $\underline{\beta}$ in Theorem \ref{thm:strong}, we have
\$
&\underline{\alpha}  = \min\{\min_{i\in J_1}\{a_i\}, \min_{i\in J_2}\{a_i/(\ell_i+\mu)\}\}\le\min_{i\in[n]}a_i\theta_{\min}((D_i^k)^{-1}) = \theta_{\min}(A(D^k)^{-1}),\\
&\underline{\beta} = \min\{\min_{i\in J_1}\{b_i\}, \min_{i\in J_2}\{b_i(m_i+\mu)\}\}\le\min_{i\in[n]}b_i\theta_{\min}(D_i^k) = \theta_{\min}(BD^k).
\$
\end{proof}

Next, we provide some bounds on a constant and a matrix related to $\kappa$ and  $P^k$ defined in Lemma \ref{lem:Lg}, respectively. They will be used in the subsequent analysis. 
\begin{lemma}\label{lem:b12}
Under Assumption \ref{ass:hessian}, we suppose that the stepsizes satisfy \eqref{eq:steps}. It holds that
\$
\kappa + 12\ell_g\beta \le \frac{9}{2}, \quad \theta_{\min}(Q^k) \ge \frac{\underline{\alpha}}{4},
\$
where the matrix $Q^k = P^k - (6 + 12\beta\ell_g)\beta/m^2 I$, the constant $\kappa$ and the matrix $P^k$ are defined in Lemma \ref{lem:Lg}, the Lipschitz constant $\ell_g$ is defined in Lemma \ref{lem:g}, and $\underline{\alpha}$ is defined in Theorem \ref{thm:strong}.
\end{lemma}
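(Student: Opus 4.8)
The plan is to reduce both assertions to elementary scalar estimates, using that $A$, $D^k$, and hence $A(D^k)^{-1}$ and $A^2(D^k)^{-2}$ are all block-diagonal, so they commute with $I$ and can be treated eigenvalue by eigenvalue, together with the stepsize consequences already distilled in Lemma \ref{lem:b11}.

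First I would prove $\kappa + 12\ell_g\beta \le 9/2$. Substituting $\ell_g = 1/\mu$ from Lemma \ref{lem:g} and $\kappa = 3 + 2\beta^2/\mu^2 + \beta/\mu$ from Lemma \ref{lem:Lg}, and using $\beta \le \mu/9$ from Lemma \ref{lem:b11}, one gets $2\beta^2/\mu^2 \le 2/81$, $\beta/\mu \le 1/9$, and $12\ell_g\beta = 12\beta/\mu \le 4/3$, hence $\kappa + 12\ell_g\beta \le 3 + 2/81 + 1/9 + 4/3 = 3 + 119/81 < 9/2$.

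Next, for $\theta_{\min}(Q^k) \ge \underline{\alpha}/4$, I would write $Q^k = A(D^k)^{-1} - (\beta + \ell_L/2) A^2(D^k)^{-2} - c_k\frac{\beta}{m^2} I$ with $c_k := \kappa + 6 + 12\beta\ell_g$. By the first part $c_k \le 9/2 + 6 = 21/2$, so using $\beta \le \underline{\alpha}m^2/21$ from Lemma \ref{lem:b11} gives $c_k\beta/m^2 \le \underline{\alpha}/2$, i.e. $c_k\frac{\beta}{m^2} I \preceq \frac{\underline{\alpha}}{2} I$. It then remains to check $A(D^k)^{-1} - (\beta + \ell_L/2) A^2(D^k)^{-2} \succeq \frac{3\underline{\alpha}}{4} I$: every eigenvalue $t$ of the diagonal matrix $A(D^k)^{-1}$ satisfies $\underline{\alpha} \le t \le \norm{A(D^k)^{-1}} \le 1/(2(2\beta+\ell_L))$ by Lemma \ref{lem:b11}, so $(\beta+\ell_L/2)t = \frac12(2\beta+\ell_L)t \le 1/4$ and thus $t - (\beta+\ell_L/2)t^2 = t(1 - (\beta+\ell_L/2)t) \ge \frac34 t \ge \frac34\underline{\alpha}$. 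Adding the two bounds yields $\theta_{\min}(Q^k) \ge 3\underline{\alpha}/4 - \underline{\alpha}/2 = \underline{\alpha}/4$.

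There is no genuine obstacle here; the only care needed is to keep the chain of constant inequalities flowing out of \eqref{eq:steps} consistent (which is precisely the role of Lemma \ref{lem:b11}) and to observe that all matrices in play are simultaneously diagonal, so the scalar argument transfers verbatim to the Loewner order.
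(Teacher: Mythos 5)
Your proposal is correct and follows essentially the same route as the paper: the same bound $\kappa+12\ell_g\beta\le 3+2/81+13/9<9/2$ from $\beta\le\mu/9$, and the same use of $\beta\le\underline{\alpha}m^2/21$ together with $\norm{A(D^k)^{-1}}\le 1/(2(2\beta+\ell_L))$ to control $Q^k$. The only cosmetic difference is bookkeeping: the paper absorbs the scalar term into $\tfrac12 A(D^k)^{-1}$ and then factors out $A^{1/2}(D^k)^{-1/2}$ to get $\tfrac14\theta_{\min}(A(D^k)^{-1})$, whereas you keep the full $A(D^k)^{-1}$, get the eigenvalue bound $\tfrac34\underline{\alpha}$, and subtract $\underline{\alpha}/2$ — both land at $\underline{\alpha}/4$.
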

\begin{proof}
By direct calculation, when $\beta\le\mu/9$ as assumed in \eqref{eq:steps}, with $\ell_g = 1/\mu$, we have 
\#\label{eq:constant}
\kappa + 12\ell_g\beta = 3 + 2\beta^2/\mu^2 + 13\beta/\mu \le 9/2.
\#
Then consider the matrix $Q^k = P^k - (6 + 12\beta\ell_g)\beta/m^2 I$, we have
\# \label{eq:qk1}
Q^k & = A(D^k)^{-1} - \rbr{\beta+\frac{\ell_L}{2}}A^2(D^k)^{-2}- \frac{(\kappa + 12\ell_g\beta + 6)\beta}{m^2}I \notag \\
& \succeq A(D^k)^{-1} - \rbr{\beta+\frac{\ell_L}{2}}A^2(D^k)^{-2}- \frac{21\beta}{2m^2}I \notag \\
& \succeq \frac{1}{2}A(D^k)^{-1} - \rbr{\beta+\frac{\ell_L}{2}}A^2(D^k)^{-2}\notag \\
& = \frac{1}{2} A^{1/2}(D^k)^{-1/2}\sbr{I- \rbr{2\beta+\ell_L}A(D^k)^{-1}}A^{1/2}(D^k)^{-1/2},
\#
where the first inequality follows from the condition that $\beta\le\mu/9$ and \eqref{eq:constant} and the last inequality follows from the condition that $\beta\le\underline{\alpha}m^2/21$ and Lemma \ref{lem:b11}. 

Thus, consider the smallest eigenvalue to the matrix $Q^k$, following from \eqref{eq:qk1}, we have 
\$
\theta_{\min}(Q^k) &\ge \frac{1}{2} \theta_{\min}\rbr{A^{1/2}(D^k)^{-1/2}\sbr{I- \rbr{2\beta+\ell_L}A(D^k)^{-1}}A^{1/2}(D^k)^{-1/2}} \notag\\
&\ge \frac{1}{2} \sbr{1- \rbr{2\beta+\ell_L}\norm{A(D^k)^{-1}}} \theta_{\min}\rbr{A(D^k)^{-1}} \notag \\
&\ge \frac{1}{4} \theta_{\min}\rbr{A(D^k)^{-1}} \notag \\
& \ge \frac{\underline{\alpha}}{4},
\$
where the third inequality follows from Lemma \ref{lem:b11} and the condition that $\beta\le\mu/9$ and the last inequality follows from Lemma \ref{lem:b11}.
\end{proof}

Now we provide the proof of Theorem \ref{thm:strong}.
\subsection{Proof of Theorem \ref{thm:strong}}\label{proof:thm:strong} 

\begin{proof}
By multiplying Lemma \ref{lem:gL} by $12$ and adding Lemma \ref{lem:Lg}, we have
\#\label{eq:step_1_and_2}
&13\Delta_{\lambda}^{k+1} + \Delta_{x}^{k+1}\\ 
&\quad \le  13\Delta_{\lambda}^{k}  - \rbr{6-12\ell_g\beta - \kappa}\norm{\nabla g(\lambda^{k})}^2_{BD^k}+ \Delta_{x}^k - \norm{\nabla_x L(x^{k}, \lambda^{k})}^2_{Q^k}, \notag
\#
where the matrix $Q^k = P^k - (6 + 12\beta\ell_g)\beta/m^2 I$.

Following from the $m_g$-strong concavity of $g(\lambda)$ in Lemma \ref{lem:g} with $m_g = 1/(\mu+{\ell})$, we have
\#\label{eq:g1}
\norm{\nabla g(\lambda^k)}^2 \ge 2m_g \rbr{g(\lambda^*) - g(\lambda^k)} = \frac{2}{\mu + \ell} \Delta_{\lambda}^k.
\# 
Thus, when taking $\beta\le \mu/9$, following from Lemma \ref{lem:b12}, we have
\# \label{eq:gap_k} 
\rbr{6-12\ell_g\beta - \kappa}\norm{\nabla g(\lambda^{k})}^2_{BD^k} 
&\ge \frac{3}{2}\norm{\nabla g(\lambda^{k})}^2_{BD^k} \notag \\
&\ge \frac{3}{2}\theta_{\min}(BD^k)\norm{\nabla g(\lambda^{k})}^2 \notag \\
& \ge \frac{3\underline{\beta}}{(\mu+{\ell})}\Delta_{\lambda}^k,
\#
where the last inequality follows from Lemma \ref{lem:b11} and \eqref{eq:g1}.

Similarly, following from the $m$-strong convexity of $L(\cdot, \lambda^k)$ in Lemma \ref{lem:L_property1}, we have
\#\label{eq:s2}
\norm{\nabla_x L({x}^{k}, \lambda^{k})}^2 \ge 2m\rbr{L(x^k, \lambda^k) - L(x^*(\lambda^k), \lambda^k)} = 2m\Delta_{x}^k.
\#
Thus, when taking $\beta\le\mu/9$, we have
\#\label{eq:l_k}
\norm{\nabla_x L(x^{k}, \lambda^{k})}^2_{Q^k} &  \ge \theta_{\min}(Q^k)\norm{\nabla_x L(x^{k}, \lambda^{k})}^2 \notag \\
&  \ge \frac{ \underline{\alpha}}{4}\norm{\nabla_x L(x^{k}, \lambda^{k})}^2 \notag \\
&  \ge  \frac{ \underline{\alpha}}{2}m\Delta_{x}^k,
\#
where the second inequality follows from Lemma \ref{lem:b12} and the last inequality follows from \eqref{eq:s2}.

Finally, substituting \eqref{eq:gap_k} and \eqref{eq:l_k} into \eqref{eq:step_1_and_2},  we have
\$
13\Delta_{\lambda}^{k+1} + \Delta_{x}^{k+1} 
& \le  13\Delta_{\lambda}^{k}  - \frac{3\underline{\beta}}{\mu+{\ell}}\Delta_{\lambda}^k + \rbr{1 -\frac{m\underline{\alpha}}{2}}\Delta_{x}^k \\
&\le 13\rbr{1-\frac{3\underline{\beta}}{13(\mu+{\ell})}}\Delta_{\lambda}^{k} + \rbr{1 - \frac{m\underline{\alpha}}{2}}\Delta_{x}^k \notag \\
&\le \rbr{1-\rho}\rbr{13\Delta_{\lambda}^{k} + \Delta_{x}^k},
\$
where $\rho = \min\{\frac{3\underline{\beta}}{13(\mu+\ell)}, \frac{m\underline{\alpha}}{2} \}$. Thus, let $\Delta^k = 13\Delta_{\lambda}^{k} + \Delta_{\tilde x}^k$, following from \eqref{eq:dx_eq}, we have
\$
\Delta^{k+1} \le (1-\rho)\Delta^k.
\$
This concludes the proof of our theorem.
\end{proof}

\section{Details for Numerical Experiments} \label{sect:experiment_details}

In this section, we present more details on experimental results for FedHybrid method. In particular, we consider least squares problems and binary classification problems in a server-client network, both over synthetic and real-life datasets. All the experiments are conducted on 3.30GHz Intel Core i9 CPUs, Ubuntu 20.04.2, in Python 3.8.5. 

\subsection{Experimental Setup} \label{sect:c1}
We evaluate all algorithms on four setups with non-IID data partitioning: (1) Linear regression on a synthetic dataset. (2) Linear regression on a non-IID partitioned Boston housing prices dataset. (3) Regularized logistic regression on a synthetic dataset. (4) Regularized logistic regression on a non-IID partitioned mushroom dataset. We introduce the four setups as follows.

As an initial study, we consider the regularized linear regression problem in the following form,
\#\label{exp:m1}
\min_{\omega\in\RR^d} \frac{1}{2}\sum_{i=1}^n \frac{1}{N_i} \norm{A_{i} \omega - b_{i}}^2 + \frac{\rho}{2}\norm{\omega}^2,
\#
where $n$ is the number of clients in the network, $d$ is the dimension of the decision variable, $N_i$ is the size of local dataset, $A_{k_i}\in \RR^{N_i\times d}$ is the design matrix at client $i$, $b_{k_i}\in \RR^{N_i}$ is the response vector at client $i$, and $\rho\ge0$ is the penalty parameter. For the regularized linear regression model defined in \eqref{exp:m1}, we consider the following two setups with non-IID data partitioning.
 
(1) Linear regression on a synthetic dataset. We set $n = 20$, $d=30$ and local dataset sizes $N_i\sim\text{lognormal}(4,2) + 50$. To obtain a non-IID data distribution among clients, at each client $i$, we generate a scaling value $\eta_i \sim \cN (0, \gamma)$ with variance $\gamma = 1$ and set $A_i = \eta_i\hat{A}_i$, where $\hat{A}_i\in\RR^{|\cD_i|\times d}$ is a matrix with each element following a uniform distribution over $(0,1]$. We also generate an unknown parameter vector $x_0\in\RR^{d}$, $x_0\sim\cN(0, I_{d})$. Given the design matrix $A_i$ and a parameter $x_0$, the response vector $b_i\in\RR^{N_i}$ is generated as 
$
b_i = A_i x_0 + v_i,
$
where the noise vector $v_i\in\RR^{N_i}$ is generated as $v_i\sim\cN(0,\sigma^2 I_{N_i})$ for $\sigma =0.5$.

(2) Linear regression on a non-IID partitioned Boston housing prices dataset. We use the Boston housing prices dataset from UCI \cite{Dua:2019}. For data preprocessing, we add an additional vector of all ones in the design matrix and normalize it. Thus, we have $d = 14$. We set $n=8$ and local dataset sizes $N_i$ following a uniform distribution. To obtain a non-IID data partitioned among clients, we sort the response in an ascending order and then distribute data points to each client by turns according to $N_i$.

Next, we explore the behavior of algorithms for solving binary classification problems. In this setting, we use the following logistic regression model, 
\#\label{exp:logis}
\min_{\omega\in\RR^d} \sum_{i=1}^n \frac{1}{N_i} \sum_{j=1}^{N_i}\sbr{ - y_i \log h_i - (1-y_i)\log(1-h_i)} + \frac{\rho}{2}\norm{\omega}^2,
\#
where $h_i= 1/(1+\exp(-\omega^\intercal \mathbf{x}_i))$, $n$ is the number of clients in the network, $d$ is the dimension of the decision variable, $N_i$ is the size of local dataset, $\mathbf{x}_i\in\RR^{d\times N_i}$ is the feature matrix at client $i$, $y_{i}\in \RR^{N_i}$ is the label of local data at client $i$, and $\rho$ is the penalty parameter.  For the regularized logistic regression model in \eqref{exp:logis}, we consider the following two setups with non-IID data partitioning.

(3) Regularized logistic regression on a synthetic dataset. The dataset Synthetic($0.5$, $0.5$) is originally introduced in \cite{li2018federated}. We set $n = 20$, $d=30$ and local dataset sizes $N_i\sim\text{lognormal}(4,2) + 50$. To obtain a non-IID data distribution among clients, we set eleven and eight clients to only have data points with label $0$ and $1$, respectively, and one client to have data points with both labels. 

(4) Regularized logistic regression on non-IID partitioned a mushroom dataset. We use the Mushroom dataset from UCI \cite{Dua:2019}. For data preprocessing, we encode categorical features, add an additional vector of all ones and normalize the design matrix. In this way, we have $d = 99$. We set $n=20$ and local dataset sizes $N_i$ following a uniform distribution. To obtain a non-IID data partitioned among clients, we set four and three clients to only have data points with label $0$ and $1$, respectively, and one client to have data points with both labels.

\subsection{Supplementary Experimental Results}\label{sect:c2}

\begin{figure}[h]
     \centering
     \begin{subfigure}[b]{0.495\textwidth}
         \centering
         \includegraphics[width=\textwidth]{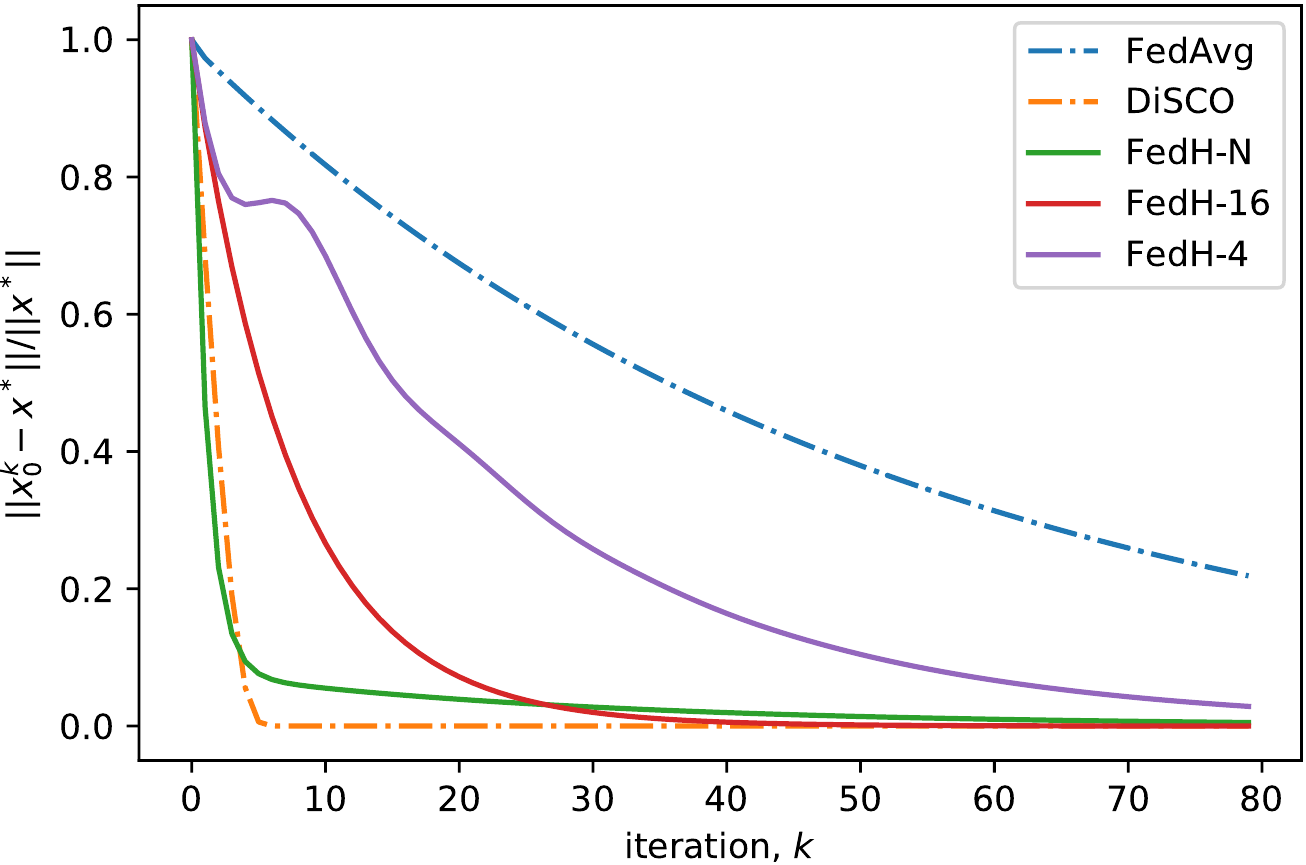}
         \caption{Least squares on synthetic dataset (setup (1))}
     \end{subfigure}
     \begin{subfigure}[b]{0.495\textwidth}
         \centering
         \includegraphics[width=\textwidth]{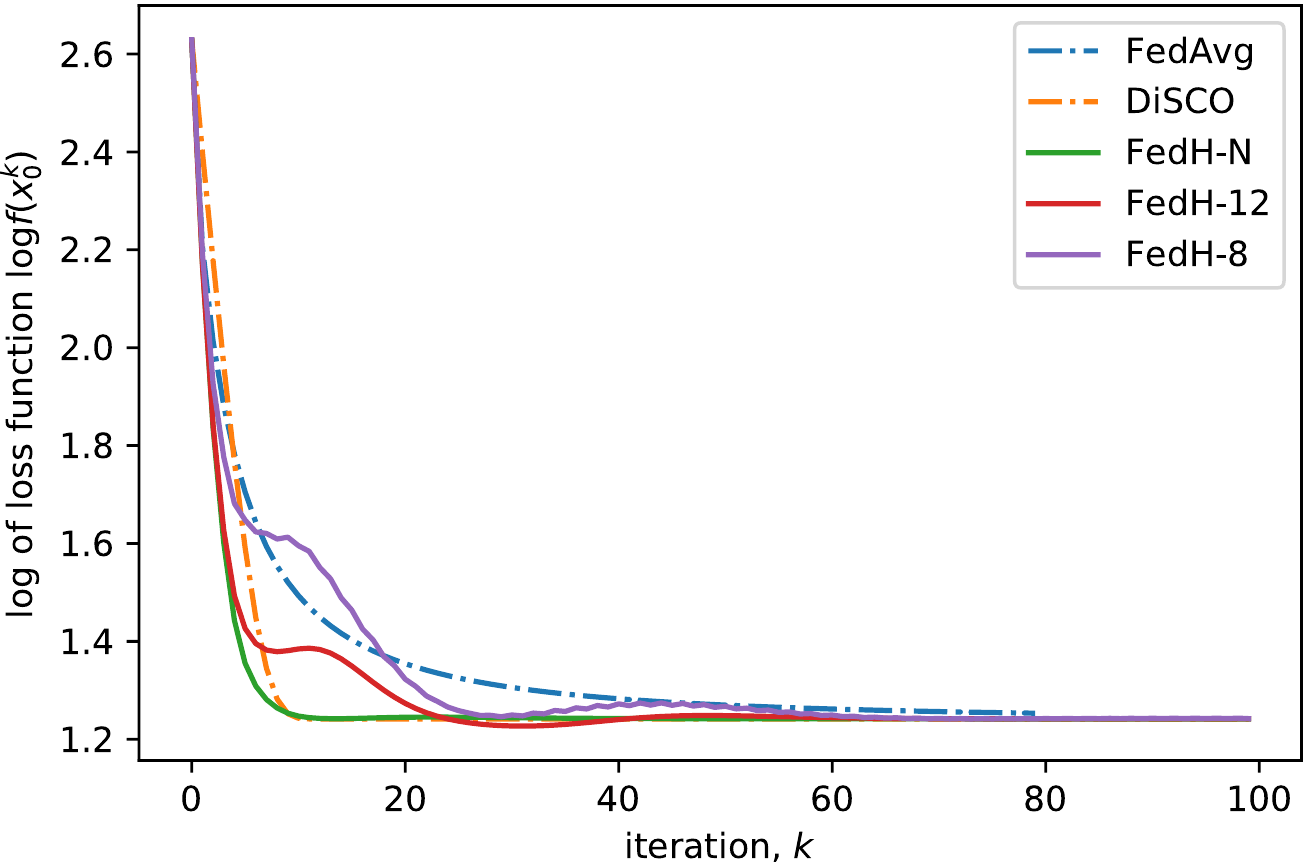}
         \caption{Logistic regression on synthetic dataset (setup (3))}
     \end{subfigure}
     \caption{Comparison of convergence of different algorithms}
     \label{fig:compare_1_app}
\end{figure}

In Figure \ref{fig:compare_1_app}, we provide more results on the comparison of methods FedAvg, DiSCO, and FedHybrid. We remark that with some clients in the network performing Newton-type updates, FedHybrid improves the overall training speed a lot and outperforms the baseline method FedAvg consistently. 

In particular, if all clients in the network perform Newton-type updates, our second-order FedH-N method achieves a comparable convergence performance compared with DiSCO.

In Figure \ref{fig:compare_2_app}, we provide more results on the comparison of convergence rate with different number of Newton-type clients. We remark that as the number of clients that run Newton-type updates increases, the convergence rate of FedHybrid method becomes faster. 
\begin{figure}[h]
     \centering
     \begin{subfigure}[b]{0.495\textwidth}
         \centering
         \includegraphics[width=\textwidth]{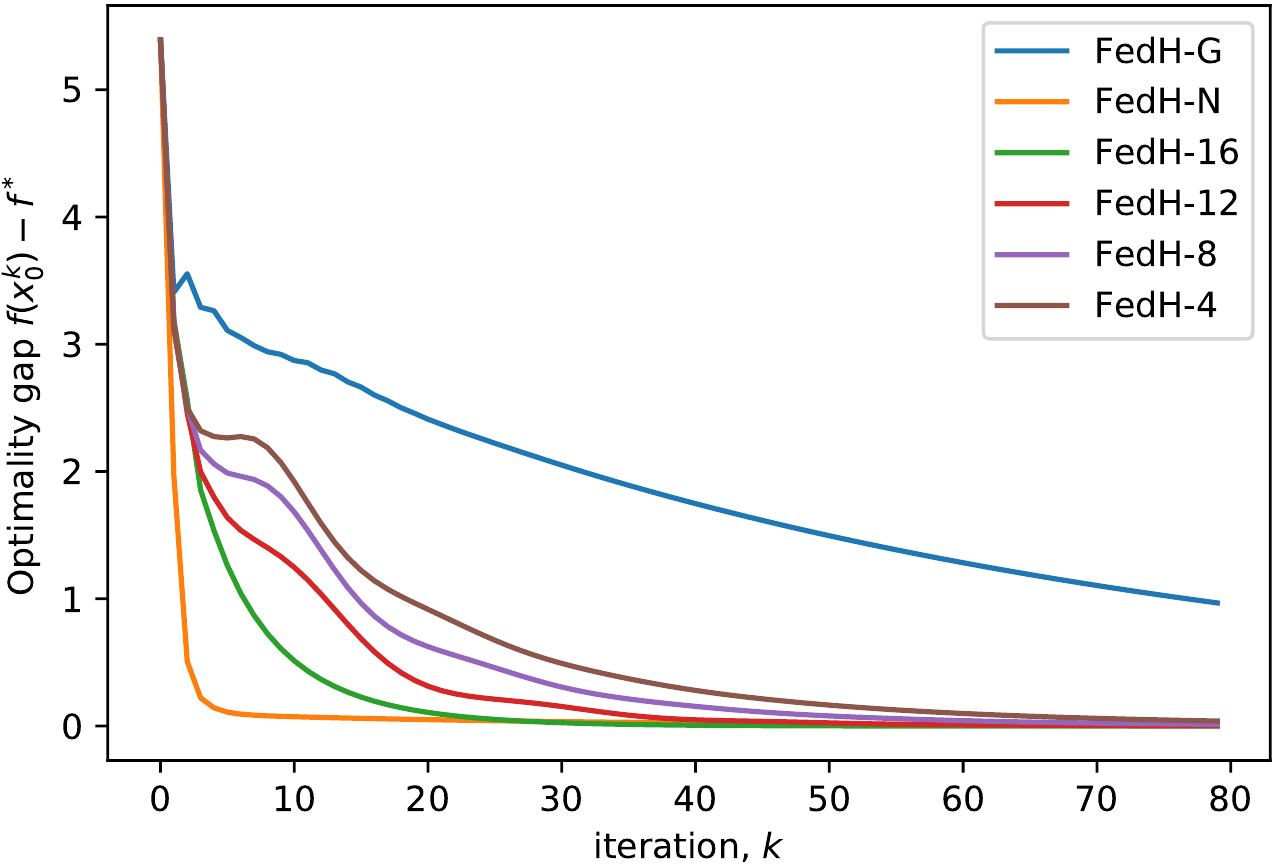}
         \caption{Least squares on synthetic dataset (setup (1)).}
     \end{subfigure}
     \begin{subfigure}[b]{0.495\textwidth}
         \centering
         \includegraphics[width=\textwidth]{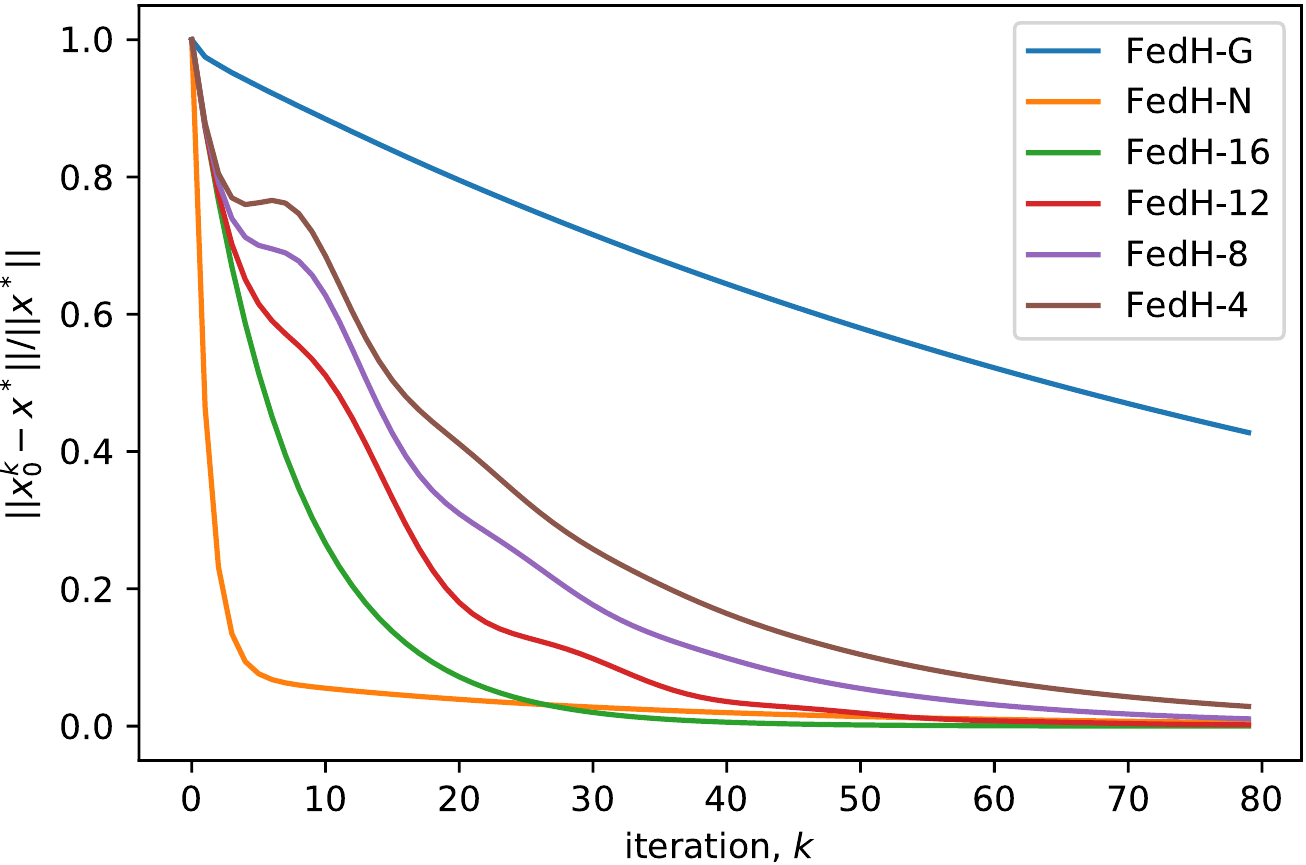}
         \caption{Least squares on synthetic dataset (setup (1)).}
     \end{subfigure}
     \centering
     \begin{subfigure}[b]{0.495\textwidth}
         \centering
         \includegraphics[width=\textwidth]{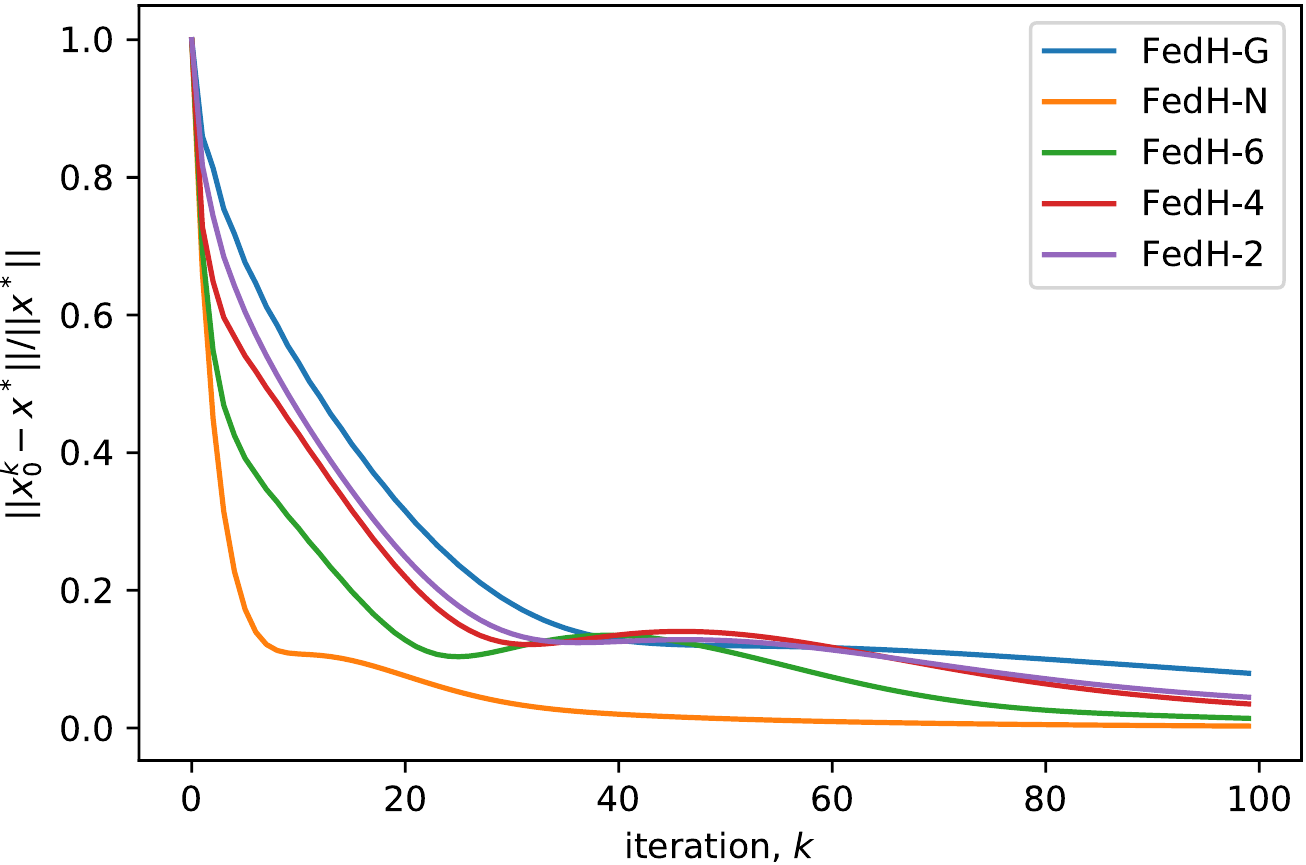}
         \caption{Least squares on Boston dataset (setup (2)).}
     \end{subfigure}
     \begin{subfigure}[b]{0.495\textwidth}
         \centering
         \includegraphics[width=\textwidth]{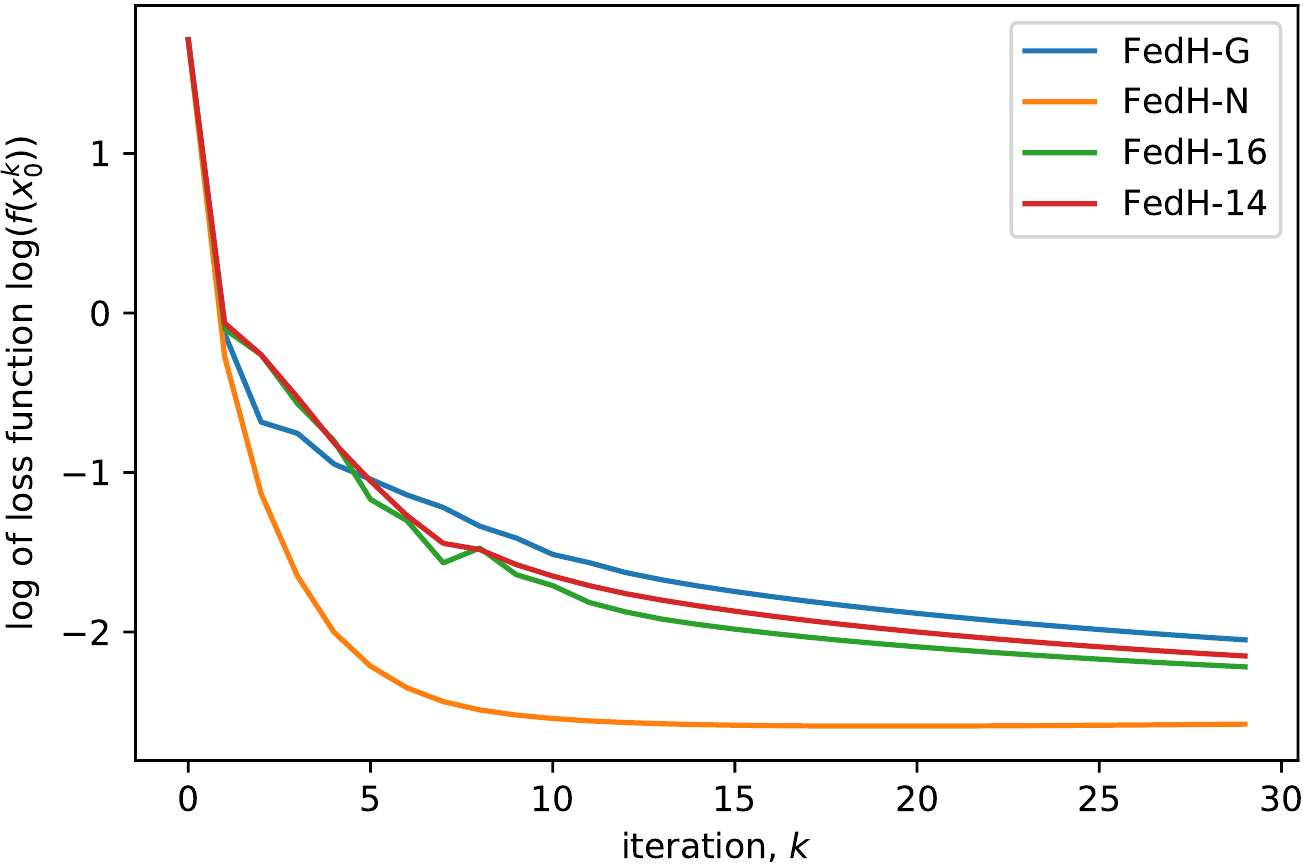}
         \caption{Logistic regression on Mushroom dataset (setup (4)).}
     \end{subfigure}
        \caption{Comparison of convergence rate with different number of Newton-type clients.}
        \label{fig:compare_2_app}
\end{figure}
\end{document}